\theoremstyle{definition}
\newtheorem{definition}{Definition}[section]
\newtheorem{theorem}[definition]{Theorem}
\newtheorem{lemma}[definition]{Lemma}
\newtheorem{corollary}[definition]{Corollary}
\newtheorem{remark}[definition]{Remark}
\newtheorem{example}[definition]{Example}
\newtheorem{proposition}[definition]{Proposition}
\title{Universal Coacting Hopf algebra of a Finite dimensional Lie-Yamaguti algebra}
\author{Saikat Goswami}
\address{TCG Centres for Research and Education in Science and Technology, Institute for Advancing Intelligence, Salt Lake, Kolkata-700091, West Bengal, India.}
\address{RKMVERI, Belur, Howrah-711202, West Bengal, India.}
\email{saikatgoswami.math@gmail.com}
\author{Satyendra Kumar Mishra}	
\address{Indian Institute of Technology (BHU), Department of Mathematical Sciences,  Varanasi-221005, India.}
\email{satyamsr10@gmail.com}
\author{Goutam Mukherjee}
\address{TCG Centres for Research and Education in Science and Technology, Institute for Advancing Intelligence, Salt Lake, Kolkata-700091, West Bengal, India.}
\address{Academy of Scientific and Innovative Research (AcSIR), Ghaziabad- 201002, India.}
\email{goutam.mukherjee@tcgcrest.org, gmukherjee.isi@gmail.com}
\keywords{{Non-associative algebras, Lie-Yamaguti algebras, Bialgebras, Hopf Algebras, Module Category}}
\subjclass[2020]{16D90, 16T05, 16T10, 17A30, 17A36, 17A60.\\ 
Corresponding Author (Author 2), email: satyamsr10@gmail.com.}
\begin{document}

\maketitle

\vspace{-0.32in}
\begin{abstract}
	M. E. Sweedler first constructed a universal Hopf algebra of an algebra. It is known that the dual notions to the existing ones play a dominant role in Hopf algebra theory. Yu. I. Manin and D. Tambara introduced the dual notion of Sweedler's construction in separate works. In this paper, we construct a universal algebra for a finite-dimensional Lie-Yamaguti algebra. We demonstrate that this universal algebra possesses a bialgebra structure, leading to a universal coacting Hopf algebra for a finite-dimensional Lie-Yamaguti algebra. Additionally, we develop a representation-theoretic version of our results. As an application, we characterize the automorphism group and classify all abelian group gradings of a finite-dimensional Lie-Yamaguti algebra.  
\end{abstract}

\tableofcontents

\vspace{-0.4in}
\section{\large Introduction}	
	This article's central objects of study are Lie-Yamaguti algebras, which generalize Lie triple systems and Lie algebras simultaneously. N. Jacobson \cite{Jacobson} introduced the notion of a Lie triple system as an algebraic object closely associated with quantum mechanics. Lie-Yamaguti algebras arose naturally from K. Nomizu's \cite{nomizu54} work on invariant affine connections on homogeneous spaces. K. Yamaguti \cite{yamaguti58} introduced the notion of Lie-Yamaguti algebras with the name \textit{general Lie triple system} in order to characterize the torsion and curvature tensors of K. Nomizu's \cite{nomizu54} canonical connection. Later, M. Kikkawa \cite{kikk-geo} renamed general Lie triple systems as \textit{Lie triple algebra}. M. Kikkawa in \cite{kikkawa72} showed that Lie-Yamaguti algebras are deeply connected to differential geometry by proving that the category of simply connected, complete, and locally reductive spaces with base points is equivalent to the category of real finite-dimensional Lie triple algebras.  
	
	\medskip
	M. K. Kinyon and A. Weinstein \cite{kinyon-weinstein} observed that Lie triple algebras (they coined the name \textit{Lie-Yamaguti algebras} for Lie triple algebras) can be constructed from Leibniz algebras. A. Sagle \cite{sagle-sim-anti} constructed several examples of Lie-Yamaguti algebras from reductive homogeneous spaces. K. Yamaguti \cite{yama-malcev} showed that Lie-Yamaguti algebras are related to many other non-associative algebras; for example, every Malcev algebra $(L,\langle \cdot, \cdot \rangle)$ gives a Lie-Yamaguti algebra structure on $L$ with the binary operation $[\cdot,\cdot]$ and the ternary operation $\{\cdot,\cdot,\cdot\}$ being 
	\begin{equation}\label{Malcev to LY}
		[x,y]= \langle x,y \rangle \quad \text{and} \quad \{x,y,z\} = \langle x,\langle y,z\rangle\rangle - \langle y,\langle x,z \rangle\rangle + \langle\langle x,y\rangle, z \rangle, \quad \forall x,y,z\in L.
	\end{equation}
	We refer to \cite{abdelwahab22, benito, benito-irreducible,our-paper,ripan-asia, ripan-JOA, kikk-kill,kikk-solv, zhang} for recent advancements in the area of Lie-Yamaguti algebras.  
	
	\medskip
	Let $A$ and $B$ be two unital associative algebras over a field $\mathbb{K}$ of characteristic zero, with $\operatorname{Hom}(A,B)$ denoting the vector space of all linear maps from $A$ to $B$. Let $C$ be a coalgebra over $\mathbb{K}$ with comultiplication $\Delta$ and counit $\epsilon$. A linear map $\phi: C\rightarrow \operatorname{Hom}(A,B)$ is said to be a {\it coalgebra measuring} if 
	\begin{equation*}
		\phi(c)(ab)=\sum \phi(c_1)(a)\cdot \phi(c_2)(b) \qquad \forall ~~ a,b\in A ~~ \mbox{and} ~~ \Delta(c)=\sum c_1\otimes c_2,
	\end{equation*}
	\begin{equation*}
	\phi(c)(1_A)=\epsilon(c)1_B \quad \forall~ c\in C.
	\end{equation*}	
	If such a coalgebra measuring exists, we say that the pair $(C,\phi)$ measures $A$ to $B$ (see \cite[Sec. 7.0, Pg. 139]{sweedler69} for more details). Coalgebra measuring can be realized as a generalization of algebra morphism from $A$ to $B$. Note that for any group-like element $c\in C$, the map $\phi(c):A\rightarrow B$ becomes an algebra morphism. For two algebras $A$ and $B$, M. E. Sweedler introduced the universal coalgebra measuring $M(A,B)$. More precisely, M. E. Sweedler \cite[Proposition 7.0.4]{sweedler69}  proved that for any two algebras $A$ and $B$, there exists a coalgebra $M(A,B)$ and a map $\Theta:  M(A,B) \rightarrow \operatorname{Hom}(A , B)$ that measures $A$ to $B$  with the following universal property: for any coalgebra $C$ and a map $\psi : C\rightarrow \operatorname{Hom}(A,B)$, if  $(C, \psi)$ measures $A$ to $B$, then there is a unique coalgebra map $F:C \to M(A,B)$ such that the following diagram
	\begin{equation*}
		\begin{tikzcd}
			B & {M(A,B) \otimes A} \\
			& {C \otimes A}
			\arrow["\Theta"', from=1-2, to=1-1]
			\arrow["\psi", from=2-2, to=1-1]
			\arrow["{F \otimes id_A}"', from=2-2, to=1-2]
		\end{tikzcd}
	\end{equation*}
	commutes. In the above diagram, we identified a map $\phi: C\rightarrow \operatorname{Hom}(A,B)$ as a map $\phi: C\otimes A\rightarrow B$ (since the $\operatorname{Hom}$ functor and the tensor product functor are adjoint to each other). The universal object $M(A,B)$ enriches the category of algebras over the category of coalgebras. Subsequently, this enrichment leads to universal measuring bialgebra \cite[Chapter 7]{sweedler69}. Sweedler's construction naturally gives a universal (acting) Hopf algebra of an algebra.  

	\medskip
	The dual notion to an existing notion plays an equally important role, we refer the reader to \cite{Bergh} to see their importance in non-commutative algebraic geometry. The dual version of the Sweedler's work was considered by Tambara in \cite[Theorem 1.1]{tambara90} and in the graded algebra case by Manin in \cite{manin88}. Tambara proved that if $A$ is a finite-dimensional algebra then the tensor functor $A \otimes - : \operatorname{Alg}_{\mathbb{K}} \to \operatorname{Alg}_{\mathbb{K}}$ has a left adjoint functor, denoted by $a(A,-)$. It turns out that the algebra $a(A,A)$ has a bialgebra structure. The Hopf envelope (Hopf algebra generated by a bialgebra) of $a(A,A)$ plays the role of a symmetry group in non-commutative geometry. G. Militaru discussed applications of universal constructions in the case of associative algebras in \cite{mil22}. Recently,  A. L. Agore and G. Militaru have affirmatively investigated Tambara's results for the case of Lie/Leibniz algebras \cite{agore20}. In subsequent work, A. L. Agore also studied a functor between the category of Lie algebra modules and the category of algebra modules in \cite{agore23}. Her study leads to a representation-theoretic counterpart of Manin-Tambara's universal coacting objects \cite{manin88, tambara90}. 

	\medskip
	In this paper, our first aim is to show that we can construct a universal coacting Hopf algebra for a finite-dimensional Lie-Yamaguti algebra. This construction gives a left adjoint functor to the current Lie-Yamaguti algebra functor (see Example \ref{DEF_current_LYA} for the definition). As an application, we characterize automorphisms of a finite-dimensional Lie-Yamaguti algebra in terms of its universal coacting Hopf algebra. We also classify abelian group gradings of a Lie-Yamaguti algebra using our construction. The article's second aim is to discuss the representation theoretic counterpart of Manin-Tambara's universal constructions in the finite-dimensional Lie-Yamaguti algebras category.

	\medskip
	The section-wise description of the paper is as follows: In Section \ref{sec-2}, we recall definitions and examples of Lie-Yamaguti algebras. We also recall some basic definitions and results on bialgebras and Hopf algebras that we use throughout the article. Section \ref{sec-3} is devoted to the construction of universal algebra $\mathcal{A}(\mathfrak{L},\mathfrak{K})$ of a finite-dimensional Lie-Yamaguti algebra $\mathfrak{L}$ and an arbitrary Lie-Yamaguti algebra $\mathfrak{K}$. If $\mathfrak{L}=\mathfrak{K}$, then we show that this universal algebra has an underlying bialgebra structure $\mathcal{A}(\mathfrak{L})$ coacting on the Lie-Yamaguti algebra $\mathfrak{L}$. In the sequel, we deduce that there exists a universal coacting Hopf algebra $\mathcal{H}(\mathfrak{L})$ for the Lie-Yamaguti algebra $\mathfrak{L}$. In Section \ref{sec-4}, we first show that the tensor product of an $\mathfrak{L}$-module and an $\mathcal{A}(\mathfrak{L},\mathfrak{K})$-module has a $\mathfrak{K}$-module structure. Then in Theorem \ref{THM_univ_Amodule_U_and_V} we construct a universal $\mathcal{A}(\mathfrak{L},\mathfrak{K})$-module of an $\mathfrak{L}$-module $U$ and $\mathfrak{K}$-module $V$ when $U$ is finite dimensional. Finally, Theorem \ref{THM_univ_module_functor} shows that this construction is functorial. Section \ref{sec-5} presents our results' applications. In particular, we characterize the automorphism group of a finite-dimensional Lie-Yamaguti algebra and classify the abelian group gradings on a finite-dimensional Lie-Yamaguti algebra.

\medskip
\section{\large Preliminaries} \label{sec-2}
	Throughout the paper, all vector spaces, (bi)linear maps, algebras, and bialgebras are over a fixed $\mathbb{K}$, an arbitrary field of characteristic $0$. By a ``basis", we always mean vector space basis over $\mathbb{K}$, and by a (co)algebra, we always mean (co)unital, (co)associative algebra. We denote by $\operatorname{Alg}_{\mathbb{K}}$, the category of all algebras over $\mathbb{K}$ and by $\operatorname{ComAlg}_{\mathbb{K}}$, the category of all commutative algebras over $\mathbb{K}$. This section recalls all the necessary definitions and notions required in this paper. 
	
\medskip
\subsection*{\large Lie-Yamaguti algebras}
	\begin{definition}
		A {\bf Lie-Yamaguti Algebra} over $\mathbb{K}$ is a triple $(\mathfrak{L},[\cdot,\cdot],\{\cdot,\cdot,\cdot\})$, where $\mathfrak{L}$ is a vector space over $\mathbb{K}$ equipped with a bilinear product $[\cdot,\cdot]:\mathfrak{L} \times \mathfrak{L} \to \mathfrak{L}$ and a trilinear product $\{\cdot,\cdot,\cdot\}:\mathfrak{L} \times \mathfrak{L} \times \mathfrak{L} \to \mathfrak{L}$ such that the following conditions hold
		\begin{equation}
			[a,b] = -[b,a], \tag{LY1} \label{LY1}
		\end{equation}
		\begin{equation}
			\{a,b,c\} = -\{b,a,c\}, \tag{LY2} \label{LY2}
		\end{equation}
		\begin{equation}
			[[a,b],c]+\{a,b,c\}+[[b,c],a]+\{b,c,a\}+[[c,a],b]+\{c,a,b\}=0,
			\tag{LY3} \label{LY3}
		\end{equation} 
		\begin{equation}
			\{[a,b],c,d\}+\{[b,c],a,d\}+\{[c,a],b,d\} = 0, 
			\tag{LY4} \label{LY4}
		\end{equation}
		\begin{equation}
			\{a,b,[c,d]\} = [\{a,b,c\},d]+[c,\{a,b,d\}] 
			\tag{LY5} \label{LY5},
		\end{equation}
		\begin{equation}
			\{a,b,\{c,d,e\}\} = \{\{a,b,c\},d,e\}+\{c,\{a,b,d\},e\}+\{c,d,\{a,b,e\}\} 
			\tag{LY6} \label{LY6}, 
		\end{equation}
		for all $a,b,c,d,e \in \mathfrak{L}$.
	\end{definition}
	
	\noindent
	For any two Lie-Yamaguti algebra $\mathfrak{L}$ and $\mathfrak{L}'$, a linear map  $\varphi:\mathfrak{L} \to \mathfrak{L}'$ is a \textbf{Lie-Yamaguti algebra morphism} if 
	\begin{align*}
		\varphi([a,b]) = [\varphi(a),\varphi(b)] \quad\mbox{and }\quad \varphi(\{a,b,c\}) = \{\varphi(a),\varphi(b),\varphi(c)\}, \quad \forall~ a,b,c \in \mathfrak{L}.
	\end{align*}
	It is said to be an isomorphism if $\varphi$ is invertible. We denote the category of all Lie-Yamaguti algebras over $\mathbb{K}$ by $\operatorname{LYA}_\mathbb{K}$ and the category of all Lie/Leibniz algebras over $\mathbb{K}$ by $\operatorname{Lie}_{\mathbb{K}}$ and $\operatorname{Leib}_{\mathbb{K}}$. 
	
	\begin{example}
		Let $(\mathfrak{g},[\cdot,\cdot])$ be a Lie algebra. Define a binary and ternary bracket on $\mathfrak{g}$ by
		\[
		[a,b]:=[a,b], \quad \{a,b,c\} := [[a,b],c],\quad \forall a,b,c\in \mathfrak{g}. 
		\]
		Then $(\mathfrak{g},[\cdot,\cdot],\{\cdot,\cdot,\cdot\})$ is a Lie-Yamaguti algebra. 
	\end{example}
	
	\begin{example}
		Let $(\mathfrak{r},\langle \cdot,\cdot \rangle)$ be a reductive Lie algebra with decomposition $\mathfrak{r}=\mathfrak{g} \oplus \mathfrak{h}$, then $\langle\mathfrak{g},\mathfrak{h}\rangle \subseteq \mathfrak{h}$ and $\langle \mathfrak{g},\mathfrak{g}\rangle \subseteq \mathfrak{g}$. Let $\pi_{\mathfrak{h}}:\mathfrak{r} \to \mathfrak{h}$ and $\pi_{\mathfrak{g}}:\mathfrak{r} \to \mathfrak{g}$ be the projection maps onto $\mathfrak{h}$ and $\mathfrak{g}$ respectively. Then $\mathfrak{h}$ is a Lie-Yamaguti algebra with the brackets
		\[
		[a,b] := \pi_{\mathfrak{h}}\langle a,b \rangle, 
		\quad \{a,b,c\} := \langle\pi_{\mathfrak{g}}\langle a,b\rangle,c\rangle,\quad\forall ~a,b,c\in \mathfrak{h}.
		\]
	\end{example}
	
	\begin{example} 
		Let $(\mathfrak{g},\cdot)$ be a Leibniz algebra. Then $\mathfrak{g}$ is a Lie-Yamaguti algebra with the following binary and ternary brackets
		\[[a,b]  =  a\cdot b - b \cdot a,  \quad
		\{a,b,c\}  =  -(a \cdot b) \cdot c, \quad\forall ~ a,b,c\in \mathfrak{g}.\]
	\end{example}
	
	Let $\mathfrak{L}$ be a Lie-Yamaguti algebra and $A$ be a commutative algebra. We then construct a Lie-Yamaguti algebra structure on $\mathfrak{L} \otimes A$ (see Example \ref{DEF_current_LYA}). We call it {\bf current Lie-Yamaguti algebra}, following the naming conventions of Lie and Leibniz algebras. 
	
	\begin{example} \label{DEF_current_LYA}
		Let $(\mathfrak{L}, [\cdot,\cdot], \{\cdot,\cdot,\cdot\})$ be a Lie-Yamaguti algebra and $A$ be a commutative algebra. Then there is a Lie-Yamaguti algebra structure on $\mathfrak{L} \otimes A$ given by 
		\begin{equation*}
			\left[ x \otimes a, y \otimes b \right]_{\mathfrak{L} \otimes A} : = \left[ x,y \right] \otimes ab \quad \text{and} \quad \{x \otimes a, y \otimes b, z \otimes c\}_{\mathfrak{L} \otimes A} := \{x,y,z\} \otimes abc, \quad \forall~ x,y,z \in \mathfrak{L},~ a,b,c \in A.
		\end{equation*} 
		
		\noindent
		Further, let $A$ and $B$ be two commutative algebras. If $f: A \to B$ is an algebra morphism, then $id_{\mathfrak{L}} \otimes f : \mathfrak{L} \otimes A \to \mathfrak{L} \otimes B$ is a Lie-Yamaguti algebra morphism. Hence for a fixed Lie-Yamaguti algebra $\mathfrak{L}$ assigning $A \mapsto \mathfrak{L} \otimes A$ defines a covariant functor $\mathfrak{L} \otimes - : \operatorname{ComAlg}_{\mathbb{K}} \to \operatorname{LYA}_\mathbb{K}$. We call this functor the {\bf current Lie-Yamaguti algebra functor}.
	\end{example} 
	
	\begin{definition}
		Let $\mathfrak{L}$ be a Lie-Yamaguti algebra. A \textbf{Lie-Yamaguti algebra module} of $\mathfrak{L}$ is a vector space $V$ along with a linear map $\rho:\mathfrak{L} \to \operatorname{End}(V)$ and two bilinear maps $D,\theta:\mathfrak{L} \times \mathfrak{L} \to \operatorname{End}(V)$ such that 
		\begin{align}
			&D(a,b) + \theta(a,b) - \theta(b,a) = 	[\rho(a),\rho(b)] - \rho([a,b]), \label{R1} \tag{R1} \\
			&\theta(a,[b,c]) - \rho(b) \theta(a,c) + \rho(c) 	\theta(a,b) = 0, \label{R2} \tag{R2} \\
			&\theta([a,b],c) - \theta(a,c) \rho(b) + \theta(b,c) 	\rho(a) = 0,
			\label{R3} \tag{R3} \\
			&\theta(c,d) \theta(a,b) - \theta(b,d) \theta(a,c) - 	\theta(a,\{b,c,d\}) + D(b,c) \theta(a,d) = 0, 
			\label{R4} \tag{R4} \\
			&[D(a,b),\rho(c)] = \rho(\{a,b,c\}),
			\label{R5} \tag{R5} \\
			&[D(a,b),\theta(c,d)] = \theta(\{a,b,c\},d) + 	\theta(c,\{a,b,d\}), \label{R6} \tag{R6}
		\end{align}
		for all $a,b,c,d \in \mathfrak{L}$.
	\end{definition}
	\noindent
	Using the relations (\ref{R1}), (\ref{R2}), (\ref{R3}), and (\ref{R5}) stated above, we get an additional identity 
	\begin{equation} \tag{R7}
		D([a,b],c) + D([b,c],a) + D([c,a],b) = 0.
	\end{equation}
	
	\smallskip 
	\noindent
	Taking $V=\mathfrak{L}$, there is a natural module structure of $\mathfrak{L}$ on itself, where the representation maps $\rho, D, \theta$ are given by 
	\begin{equation*}
		\rho(a)(b) := [a,b], \quad D(a,b)(c) := \{a,b,c\}, \quad\mbox{and }\quad \theta(a,b)(c) :=  \{c,a,b\}, \quad \forall~ a,b,c \in \mathfrak{L}. 
	\end{equation*}
	
\medskip
\subsection*{\large Bialgebras and Hopf algebras}~
	
	\medskip\noindent
	Before going to bialgebras and Hopf algebras, we first recall the definition of coalgebra and coalgebra modules. 

	\begin{definition} 
		A {\bf coalgebra} over $\mathbb{K}$ is a vector space $C$ over $\mathbb{K}$ along with linear maps, $\Delta: C \to C \otimes C$ (comultiplication map) and $\varepsilon : C \to \mathbb{K}$ (counital map) such that the following conditions hold
		\begin{enumerate}
			\item (Coassociativity condition) $(id \otimes \Delta) \circ \Delta ~=~ (\Delta \otimes id) \circ \Delta$,
			\item (Counit) $(id \otimes \varepsilon) \circ \Delta ~=~ id ~=~ (\varepsilon \otimes id) \circ \Delta$.
		\end{enumerate}
		
	 	\noindent
	\end{definition}
	
	\begin{definition} 
		Let $(C,\Delta, \varepsilon)$ and $(D,\Delta',\varepsilon')$ be two coalgebras over $\mathbb{K}$. A linear map $f:C \to D$ is called a {\bf coalgebra morphism} if $f$ satisfies the conditions: $\Delta' \circ f = (f \otimes f) \circ \Delta$ and $\varepsilon' \circ f = \varepsilon$.	
	\end{definition}
	
We denote the category of coalgebras over $\mathbb{K}$ by $\operatorname{Cog}_{\mathbb{K}}$.	
	\begin{definition}
		Let $(C, \Delta, \varepsilon)$ be a coalgebra over $\mathbb{K}$. A {\bf right comodule over $C$} is a vector space $M$ together with a linear map $f: M \to M \otimes C$ such that
		\begin{equation*}
			(id \otimes \Delta) \circ f = (f \otimes id) \circ f \quad \text{and}
			\quad (id \otimes \varepsilon) \circ f = id
		\end{equation*}
	\end{definition}
	
	\medskip\noindent
	{\bf Duality between algebras and coalgebras:}
	The notion of algebras and coalgebras are dual in the sense that their axioms are dual to each other (obtained by reversing the arrows). The dual of a coalgebra is an algebra, while in general, the dual of an algebra may not be a coalgebra. Assuming the algebra to be finite-dimensional, its dual becomes a coalgebra. We now briefly sketch the construction of the dual of a coalgebra and discuss the finite dual functor. For a more detailed description, see \cite[Sec. 6.0, Pg. 109]{sweedler69}.  
	
	\medskip
	Let $(C,\Delta,\varepsilon)$ be a coalgebra over $\mathbb{K}$. Then the dual of $C$, that is $C^* = \operatorname{Hom}(C,\mathbb{K})$, all linear maps from $C$ to $\mathbb{K}$, becomes an algebra with the convolution product: for any $f, g \in \operatorname{Hom}(C, \mathbb{K})$ and for any $c \in C$ with $\Delta(c) = \sum_{(c)} c_{(1)} \otimes c_{(2)} \in C \otimes C$ then 
	\[f \star g(c) := \sum_{(c)} f(c_{(1)}) g(c_{(2)})\]
	The product $\star$ makes $\operatorname{Hom}(C,\mathbb{K})$ an algebra, which is called the {\bf algebraic dual of the coalgebra $C$}. The unit element in $\operatorname{Hom}(C,\mathbb{K})$ is the map $\mathbbm{1}: C \to \mathbb{K}$ given by $\mathbbm{1}(c) = 1_{\mathbb{K}}$ for all $c \in C$. This construction extends to a contravariant functor $()^*: \operatorname{Cog}_{\mathbb{K}} \to \operatorname{Alg}_{\mathbb{K}}$, where for any coalgebra morphism $f:C \to D$ the algebra morphism $f^*:D^* \to C^*$ is the transpose map.
	
	\medskip
	Now, for an algebra $(A,m)$, its vector space dual $A^*$ is not necessarily a coalgebra; the natural candidate for the comultiplication $\Delta$  is the transpose map $m^*:A^* \to (A \otimes A)^*$ of the multiplication map $m: A \otimes A \to A$. Nevertheless, note that $(A \otimes A)^*$ is isomorphic to $A^* \otimes A^*$ as a vector space if and only if $A$ is finite-dimensional. Hence, we need the notion of finite dual. The {\bf 
		finite dual functor} $()^{\circ} : \operatorname{Alg}_{\mathbb{K}} \to \operatorname{Cog}_{\mathbb{K}}$ is the left adjoint functor  to $()^*: \operatorname{Cog}_{\mathbb{K}} \to \operatorname{Alg}_{\mathbb{K}}$, defined by 
		\[A^{\circ} := \{f \in A^* : \operatorname{Ker}(f)~ \text{contains some ideal} ~ I ~\text{of}~ A ~\text{with}~ \operatorname{dim}_{\mathbb{K}}(A/I) < \infty\} \quad \text{for any}~ A \in \operatorname{Alg}_{\mathbb{K}}.\]  
	
	\medskip
	Next, we recall the definition of a bialgebra and a Hopf algebra over $\mathbb{K}$.
	\begin{definition}
		A {\bf bialgebra} over a field $\mathbb{K}$ is a vector space $B$ over $\mathbb{K}$ equipped with linear maps (multiplication) $\nabla : B \otimes B \to B$, (unit) $\eta: \mathbb{K} \to B$, (comultiplication) $\Delta: B \to B \otimes B$, and (counit) $\varepsilon: B \to \mathbb{K}$ such that $(B,\nabla,\eta)$ is an algebra and $(B,\Delta,\varepsilon)$ is a coalgebra along with the following compatibility conditions
		
		\begin{enumerate}
			\item a compatibility relation between multiplication $\nabla$ and comultiplication $\Delta$ given by the commutative diagram
			\[\begin{tikzcd}
				{B \otimes B} & B & {B \otimes B} \\
				{B \otimes B \otimes B \otimes B} && {B \otimes B \otimes B \otimes B}
				\arrow["\nabla", from=1-1, to=1-2]
				\arrow["\Delta", from=1-2, to=1-3]
				\arrow["{id \otimes \mu \otimes id }", from=2-1, to=2-3]
				\arrow["{\Delta \otimes \Delta}"', 	from=1-1, to=2-1]
				\arrow["{\nabla \otimes \nabla}"', 	from=2-3, to=1-3],
			\end{tikzcd}\]
			where $\mu: B \otimes B \to B$ is the linear map defined by $\mu(x \otimes y) = y \otimes x$ for all $x, y \in B$,
			\smallskip
			\item compatibility between multiplication $\nabla$ and counit $\varepsilon$, and comultiplication $\Delta$ and unit $\eta$ given by the commutative diagrams
			\[\begin{tikzcd}
				{B \otimes B} & B &&& {\mathbb{K} 	\otimes \mathbb{K} \cong \mathbb{K}} \\
				& {\mathbb{K} \otimes \mathbb{K} \cong 	\mathbb{K},} &&& {B \otimes B} & {B,}
				\arrow["\nabla", from=1-1, to=1-2]
				\arrow["\Delta"', from=2-6, to=2-5]
				\arrow["{\varepsilon \otimes 	\varepsilon}"', from=1-1, to=2-2]
				\arrow["\varepsilon", from=1-2, to=2-2]
				\arrow["{\eta \otimes \eta}"', 	from=1-5, to=2-5]
				\arrow["\eta", from=1-5, to=2-6]
			\end{tikzcd}\]
			
			\item and a compatibility relation between unit $\eta$ and counit $\varepsilon$ is given by the commutative diagram
			\[\begin{tikzcd}
				& B \\
				\mathbb{K} && \mathbb{K.}
				\arrow["id", from=2-1, to=2-3]
				\arrow["\eta", from=2-1, to=1-2]
				\arrow["\varepsilon", from=1-2, to=2-3]
			\end{tikzcd}\]
		\end{enumerate}
		These commutative diagrams imply that either ``comultiplication and counit are homomorphisms of algebras" or, equivalently, ``multiplication and unit are homomorphisms of coalgebras." 
	\end{definition}
	
	For any two bialgebra $B$ and $C$ over $\mathbb{K}$ a {\bf bialgebra morphism} $f: B \to C$ is a linear map that is both an algebra and a coalgebra morphism.  
	
	\begin{definition}
		A {\bf Hopf algebra} is a bialgebra $H$ over $\mathbb{K}$ together with a linear map $s: H \to H$ (called the antipode) such that the following diagram
		\[\begin{tikzcd}
			{H \otimes H} && {H \otimes H} \\
			H & \mathbb{K} & H \\
			{H \otimes H} && {H \otimes H}
			\arrow["\varepsilon", from=2-1, to=2-2]
			\arrow["\eta", from=2-2, to=2-3]
			\arrow["\Delta", from=2-1, to=1-1]
			\arrow["{s \otimes id}", from=1-1, to=1-3]
			\arrow["\nabla", from=1-3, to=2-3]
			\arrow["\Delta"', from=2-1, to=3-1]
			\arrow["{id \otimes s}", from=3-1, to=3-3]
			\arrow["\nabla"', from=3-3, to=2-3]
		\end{tikzcd}\]
		commutes. Here, $\Delta$ is the comultiplication map, $\nabla$ is the multiplication map, $\eta$ is the unit, and $\varepsilon$ is the counit of the bialgebra $H$.
	\end{definition} 
	We now recall from \cite{takeuchi71} the construction of a free commutative Hopf algebra from a coalgebra. 
	
	\bigskip\noindent
	{\bf Free Hopf algebra $H(C)$ generated by a coalgebra $C$} \cite[pg. 562]{takeuchi71} 
	
	\smallskip\noindent
	For any vector space $V$, the tensor algebra $T(V)$ has a natural bialgebra structure. Let $C$ be a coalgebra over $\mathbb{K}$. Let $(V_i)_{i \ge 0}$ be a sequence of coalgebras as follows
	\[V_0 = C, \quad V_{i+1} = V_i^{op}.\]
	So the sequence is of the form $(C, C^{op}, C, C^{op},\ldots)$. Let $V:= \sum_{i=0}^{\infty} V_i$ be the direct sum of coalgebras, again a coalgebra. Let $S: V \to V^{op}$ be the coalgebra map $(x_0,x_1,x_2,\ldots) \mapsto (0,x_0,x_1,x_2,\ldots)$. $S$ induces a bialgebra map $T(V) \to T(V)^{op}$. Let $I$ be the two sided ideal of $T(V)$ generated by the elements $\sum x_{(1)} S(x_{(2)}) - \varepsilon(x)1$ and $\sum S(x_{(1)}) x_{(2)} - \varepsilon(x)1$, for $x \in V$. Define $H(C):=T(V)/I$. Note that $S(I) \subset I$. Thus, $H(C)$ is a bialgebra and $S: H(C) \to H(C)^{op}$ is a bialgebra map. It is easy to verify that $S$ is an antipode of $H(C)$, making $H(C)$ a Hopf algebra.  
	
	\begin{definition} \label{DEF_bi-alg_to_hopf-alg}
		$H(C)$ constructed above is said to be \textbf{the free Hopf algebra generated by a coalgebra $C$}. 
	\end{definition}

\medskip
\section{\large The universal coacting Hopf algebra} \label{sec-3}
	In this section, we construct a universal coacting Hopf algebra of a finite-dimensional Lie-Yamaguti algebra following the approach of \cite{tambara90,agore20}. First, let us recall from Example \ref{DEF_current_LYA} that for a Lie-Yamaguti algebra $\mathfrak{L}$, there exists a functor $\mathfrak{L} \otimes -: \operatorname{ComAlg}_{\mathbb{K}} \to \operatorname{LYA}_{\mathbb{K}}$ from the category of commutative algebras to the category of Lie-Yamaguti algebras, called the current Lie-Yamaguti algebra functor. We have the following result if $\mathfrak{L}$ is finite-dimensional. 
	
	\begin{theorem}
		Let $\mathfrak{L}$ be a finite-dimensional Lie-Yamaguti algebra. Then the current Lie-Yamaguti algebra functor $\mathfrak{L} \otimes - : \operatorname{ComAlg}_\mathbb{K} \rightarrow \operatorname{LYA}_\mathbb{K}$ has a left adjoint denoted by $\mathcal{A}(\mathfrak{L}, -) : \operatorname{LYA}_\mathbb{K} \rightarrow \operatorname{ComAlg}_\mathbb{K}$.
	\end{theorem}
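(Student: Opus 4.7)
My plan is to construct the left adjoint $\mathcal{A}(\mathfrak{L},-)$ explicitly, following the blueprint of Tambara \cite{tambara90} and Agore--Militaru \cite{agore20}. Given an arbitrary Lie-Yamaguti algebra $\mathfrak{K}$, I need to produce a commutative algebra $\mathcal{A}(\mathfrak{L},\mathfrak{K})$ together with a natural bijection
\begin{equation*}
	\operatorname{Hom}_{\operatorname{ComAlg}_\mathbb{K}}\bigl(\mathcal{A}(\mathfrak{L},\mathfrak{K}),\, A\bigr) \;\cong\; \operatorname{Hom}_{\operatorname{LYA}_\mathbb{K}}\bigl(\mathfrak{K},\, \mathfrak{L}\otimes A\bigr).
\end{equation*}
The rough idea is dictated by the following observation: since $\mathfrak{L}$ is finite-dimensional with a fixed basis $\{e_1,\dots,e_n\}$, any linear map $\varphi:\mathfrak{K}\to \mathfrak{L}\otimes A$ decomposes uniquely as $\varphi(x)=\sum_i e_i\otimes\varphi_i(x)$ for linear maps $\varphi_i:\mathfrak{K}\to A$, and the Lie-Yamaguti morphism condition translates into explicit polynomial identities among the $\varphi_i(x)$'s involving the structure constants of $\mathfrak{L}$.

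Concretely, first I would write $[e_i,e_j]=\sum_k \alpha_{ij}^k e_k$ and $\{e_i,e_j,e_\ell\}=\sum_k \beta_{ij\ell}^k e_k$. Then I would form the symmetric algebra $S(\mathfrak{L}^*\otimes\mathfrak{K})$ on the vector space $\mathfrak{L}^*\otimes\mathfrak{K}$, denoting the generator $e_i^*\otimes x$ by $x\langle i\rangle$ (so each $x\langle i\rangle$ is linear in $x$). Next I would let $I$ be the ideal of $S(\mathfrak{L}^*\otimes\mathfrak{K})$ generated by the family of elements
\begin{equation*}
	[x,y]\langle k\rangle - \sum_{i,j}\alpha_{ij}^k\, x\langle i\rangle\, y\langle j\rangle, \qquad \{x,y,z\}\langle k\rangle - \sum_{i,j,\ell}\beta_{ij\ell}^k\, x\langle i\rangle\, y\langle j\rangle\, z\langle \ell\rangle,
\end{equation*}
as $x,y,z$ range over $\mathfrak{K}$ and $k\in\{1,\dots,n\}$, and define $\mathcal{A}(\mathfrak{L},\mathfrak{K}):=S(\mathfrak{L}^*\otimes\mathfrak{K})/I$.

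To establish the adjunction, given a Lie-Yamaguti morphism $\varphi:\mathfrak{K}\to\mathfrak{L}\otimes A$ with components $\varphi_i$ as above, I would extend the linear assignment $x\langle i\rangle\mapsto \varphi_i(x)$ to an algebra map $\tilde{\varphi}:S(\mathfrak{L}^*\otimes\mathfrak{K})\to A$ by the universal property of the symmetric algebra; comparing $\varphi([x,y])$ with $[\varphi(x),\varphi(y)]_{\mathfrak{L}\otimes A}$ and $\varphi(\{x,y,z\})$ with $\{\varphi(x),\varphi(y),\varphi(z)\}_{\mathfrak{L}\otimes A}$ in coordinates shows that $I\subseteq \ker(\tilde\varphi)$, so $\tilde\varphi$ descends to $\mathcal{A}(\mathfrak{L},\mathfrak{K})\to A$. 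Conversely, any commutative algebra morphism $f:\mathcal{A}(\mathfrak{L},\mathfrak{K})\to A$ produces linear maps $\varphi_i:\mathfrak{K}\to A$ via $\varphi_i(x):=f(\overline{x\langle i\rangle})$, and reading the defining relations modulo $I$ backwards forces the resulting $\varphi(x):=\sum_i e_i\otimes\varphi_i(x)$ to be a Lie-Yamaguti morphism. These two constructions are mutually inverse by inspection, and the bijection is evidently natural in $A$, giving the desired adjunction. Functoriality of $\mathcal{A}(\mathfrak{L},-)$ on a Lie-Yamaguti morphism $g:\mathfrak{K}\to\mathfrak{K}'$ follows because the induced map $x\langle i\rangle\mapsto g(x)\langle i\rangle$ at the level of $S(\mathfrak{L}^*\otimes\mathfrak{K})$ sends the defining relations of $I$ into those defining the ideal for $\mathfrak{K}'$.

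The main obstacle is mostly bookkeeping: checking that the ideal $I$ is indeed the right set of relations, i.e.\ that the two maps constructed in the adjunction are inverse to one another and that a commutative algebra morphism $\mathcal{A}(\mathfrak{L},\mathfrak{K})\to A$ really does unpack to a \emph{Lie-Yamaguti} (not merely linear) morphism $\mathfrak{K}\to\mathfrak{L}\otimes A$. The identities \eqref{LY1}--\eqref{LY6} of $\mathfrak{L}$ translate into consistency conditions on the structure constants $\alpha_{ij}^k,\beta_{ij\ell}^k$, which guarantee that the relations generating $I$ respect the bilinearity/trilinearity and antisymmetry of the brackets on $\mathfrak{K}$; no additional identities are required beyond those already imposed, which is what makes the construction work uniformly in $\mathfrak{K}$. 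Independence of $\mathcal{A}(\mathfrak{L},\mathfrak{K})$ from the chosen basis of $\mathfrak{L}$ is automatic since $\mathfrak{L}^*\otimes\mathfrak{K}$ is basis-free and a change of basis merely induces an isomorphism between the corresponding ideals.
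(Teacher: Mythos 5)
Your proposal is correct and takes essentially the same route as the paper: after choosing a basis $\{f_i : i \in I\}$ of $\mathfrak{K}$, your $S(\mathfrak{L}^*\otimes\mathfrak{K})/I$ is precisely the paper's $\mathbb{K}[X_{si} : s=1,\ldots,n,\ i\in I]/J$ with $J$ generated by the polynomials $P^{(\mathfrak{L},\mathfrak{K})}_{(a,i,j)}$ and $Q^{(\mathfrak{L},\mathfrak{K})}_{(a,i,j,k)}$, and your adjunction argument (extend component maps by the universal property of the free commutative algebra, check the ideal dies, read the relations backwards for the inverse, then verify naturality and functoriality) is the paper's argument verbatim. The only difference is your basis-free packaging of the generators and relations, which is cosmetic, though it does make functoriality in $\mathfrak{K}$ (sending $x\langle i\rangle \mapsto g(x)\langle i\rangle$ visibly maps relations to relations) slightly more direct than the paper's appeal to the bijection $\Psi_{\mathfrak{K},A}$.
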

	\begin{proof} 
		Let $\{e_1,e_2,\ldots,e_n\}$ be a basis of $\mathfrak{L}$. Let $\{\tau_{ij}^s : i,j,s = 1,2,\ldots,n\} \cup \{\omega_{ijk}^s : i,j,k,s = 1,2,\ldots,n\}$ be the structure constants of $\mathfrak{L}$, i.e., 
		\begin{equation} \label{EQN_LYA_L_struct_const}
			[e_i,e_j] = \sum_{s=1}^n \tau_{ij}^s e_s ~~ \forall~ i,j=1,2,\ldots,n \quad\text{and}\quad \{e_i,e_j,e_k\} = \sum_{s=1}^n \omega_{ijk}^s e_s, ~~ \forall~ i,j,k = 1,2,\ldots,n.
		\end{equation}
		Let $\mathfrak{K}$ be an arbitrary  Lie-Yamaguti algebra; using it, we first construct a commutative algebra $\mathcal{A}(\mathfrak{L},\mathfrak{K})$. Let $\{f_i : i \in I\}$ ($I$ need not be finite) be a basis of $\mathfrak{K}$. We consider the structure constants of $\mathfrak{K}$ as follows: for any $i,j \in I$, let $B_{ij} \subset I$ be a finite subset of $I$ such that
		\begin{equation} \label{EQN_LYA_K_struct_const1}
			[f_i,f_j]_{\mathfrak{K}} = \sum_{u \in B_{ij}} \alpha_{ij}^u f_u,
		\end{equation}  
		and for any $i,j,k \in I$, let $C_{ijk} \subset I$ be a finite subset of $I$ for which we have
		\begin{equation}\label{EQN_LYA_K_struct_const2}
			\{f_i,f_j,f_k\}_{\mathfrak{K}} = \sum_{u \in C_{ijk}} \beta_{ijk}^u f_u.
		\end{equation}
		Consider the polynomial algebra $\mathbb{K}[X_{si} : s=1,2,\ldots,n, i \in I]$ with coefficients in $\mathbb{K}$ and define 
		\[\mathcal{A}(\mathfrak{L},\mathfrak{K}) := \mathbb{K}[X_{si} : s=1,2,\ldots,n, i \in I] / J,\]
		where $J$ is the ideal generated by all the polynomials of the form 
		\begin{equation}\label{EQN_LYA_univ_Poly1}
			P_{(a,i,j)}^{(\mathfrak{L},\mathfrak{K})} := \sum_{u \in B_{ij}} \alpha_{ij}^u X_{au} - \sum_{s,t = 1}^n \tau_{st}^{a} X_{si} X_{tj}, \quad a = 1,\ldots,n ~~\text{and}~~ i,j \in I.
		\end{equation}
		\begin{equation}\label{EQN_LYA_univ_Poly2}
			Q_{(a,i,j,k)}^{(\mathfrak{L},\mathfrak{K})} := \sum_{u \in C_{ijk}} \beta_{ijk}^u X_{au} - \sum_{r,s,t = 1}^n \omega_{rst}^{a} X_{ri} X_{sj} X_{tk}, \quad a = 1,\ldots,n ~~\text{and}~~ i,j,k \in I.
		\end{equation}
		Denoting by $x_{si} := \widehat{X_{si}}$, the class of $X_{si}$ in $\mathcal{A}(\mathfrak{L},\mathfrak{K})$, we have the following expressions
		\begin{equation}\label{EQN_LYA_univ_poly1}
			\sum_{u \in B_{ij}} \alpha_{ij}^u x_{au} ~=~ \sum_{s,t = 1}^n \tau_{st}^{a} x_{si} x_{tj}, \quad a = 1,\ldots,n ~~\text{and}~~ i,j \in I,
		\end{equation}
		\begin{equation}\label{EQN_LYA_univ_poly2}
			\sum_{u \in C_{ijk}} \beta_{ijk}^u x_{au} ~=~ \sum_{r,s,t = 1}^n \omega_{rst}^{a} x_{ri} x_{sj} x_{tk}, \quad a = 1,\ldots,n ~~\text{and}~~ i,j,k \in I.
		\end{equation}
		
		\medskip\noindent
		With the following diagram 
		\[\begin{tikzcd}
			{\operatorname{ComAlg}_\mathbb{K}} && {\operatorname{LYA}_\mathbb{K},}
			\arrow["{\mathfrak{L} \otimes -}", from=1-1, to=1-3]
			\arrow["{\mathcal{A}(\mathfrak{L}, -)}", bend right=-40, from=1-3, to=1-1]
		\end{tikzcd}\]
		
		\medskip\noindent
		we observe that for any $\mathfrak{K} \in \operatorname{LYA}_\mathbb{K}$, we have 
		$\Big( (\mathfrak{L} \otimes -) \circ (\mathcal{A(\mathfrak{L},-)})\Big)(\mathfrak{K}) = \mathfrak{L} \otimes \mathcal{A}(\mathfrak{L},\mathfrak{K})$.

		
		\bigskip
		Now, we define a map $\mathfrak{K}$ and $\mathfrak{L} \otimes \mathcal{A}(\mathfrak{L},\mathfrak{K})$ by 
		\begin{equation}\label{MAP_LYA_morphism}
			\Phi_{\mathfrak{K}} : \mathfrak{K} \to \mathfrak{L} \otimes \mathcal{A}(\mathfrak{L},\mathfrak{K}), \quad \Phi_{\mathfrak{K}}(f_i) := \sum_{s=1}^n e_s \otimes x_{si} \quad \forall~~ i \in I.
		\end{equation}
		The following calculations show that the map $\Phi_\mathfrak{K}$ is a Lie-Yamaguti algebra morphism. 
		\begin{eqnarray*}
			[\Phi_{\mathfrak{K}}(f_i), \Phi_{\mathfrak{K}}(f_j)]_{\mathfrak{L} \otimes \mathcal{A}(\mathfrak{L},\mathfrak{K})} 
			&=& \left[\sum_{s=1}^{n} e_s \otimes x_{si}, \sum_{t=1}^n e_t \otimes x_{tj}\right]_{\mathfrak{L} \otimes \mathcal{A}(\mathfrak{L},\mathfrak{K})} \\
			&=& \sum_{s,t = 1}^{n} [e_s,e_t]_{\mathfrak{L}} \otimes x_{si}x_{tj} 
			~=~ \sum_{a = 1}^{n} e_a \otimes \left( \sum_{s,t = 1}^n \tau_{st}^a x_{si} x_{tj}\right) \\
			&=& \sum_{a = 1}^{n} e_a \otimes \left( \sum_{u \in B_{ij}} \alpha_{ij}^u x_{au} \right)
			~=~ \sum_{u \in B_{ij}} \alpha_{ij}^u \Phi_\mathfrak{K}(f_u) \\ \\ 
			&=& \Phi_\mathfrak{K} [f_i,f_j]_{\mathfrak{K}} ,
		\end{eqnarray*}
		for any $i,j \in I$. Moreover,
		\begin{eqnarray*}
			\{\Phi_{\mathfrak{K}}(f_i), \Phi_{\mathfrak{K}}(f_j), \Phi_{\mathfrak{K}}(f_k)\}_{\mathfrak{L} \otimes \mathcal{A}(\mathfrak{L},\mathfrak{K})} 
			&=& \left\{\sum_{r=1}^{n} e_r \otimes x_{ri}, \sum_{s=1}^{n} e_s \otimes x_{sj}, \sum_{t=1}^n e_t \otimes x_{tk}\right\} \\
			&=& \sum_{r,s,t = 1}^n \{e_r, e_s, e_t\}_{\mathfrak{L}} \otimes x_{ri} x_{sj} x_{tk} \\
			&=& \sum_{a = 1}^{n} e_a \otimes \left( \sum_{r,s,t = 1}^n \omega_{rst}^a x_{ri} x_{sj} x_{tk}\right) \\
			&=& \sum_{a = 1}^n e_a \otimes \left( \sum_{u \in C_{ijk}} \beta_{ijk}^u x_{au} \right) 
			~=~ \sum_{u \in C_{ijk}} \beta_{ijk}^u \Phi_\mathfrak{K}(f_u)\\ \\
			&=& \Phi_\mathfrak{K}\{f_i,f_j,f_k\}_{\mathfrak{K}},
		\end{eqnarray*}
		for any $i,j,k \in I$.
		Therefore, $\Phi_{\mathfrak{K}} : \mathfrak{K} \to \mathfrak{L} \otimes \mathcal{A}(\mathfrak{L},\mathfrak{K})$ is a Lie-Yamaguti algebra morphism. 
		
		\bigskip
		Now, for an arbitrary commutative algebra $A$, we define a map
		\begin{equation}\label{MAP_hom_set_bijection}
			\Psi_{\mathfrak{K},A} : 	\operatorname{Hom}_{\operatorname{ComAlg}_\mathbb{K}}(\mathcal{A}(\mathfrak{L},\mathfrak{K}), A) \longrightarrow \operatorname{Hom}_{\operatorname{LYA}_\mathbb{K}}(\mathfrak{K}, \mathfrak{L} \otimes A), \quad 
			\Psi_{\mathfrak{K},A}(\theta) := 	(id_\mathfrak{L} \otimes \theta) \circ \Phi_\mathfrak{K}\text{, i.e.,}
		\end{equation} 
		\begin{equation*}
			\begin{tikzcd}
				{\mathcal{A}(\mathfrak{L},\mathfrak{K})} & {\mathfrak{K}} 
				\\
				A & {\mathfrak{L} \otimes A.}
				\arrow[""{name=0, anchor=center, inner 	sep=0}, "\theta"', from=1-1, to=2-1]
				\arrow[""{name=1, anchor=center, inner 	sep=0}, "{(id_\mathfrak{L} \otimes \theta) \circ \Phi_\mathfrak{K}}", from=1-2, to=2-2]
				\arrow[shorten <=16pt, shorten >=16pt, 	maps to, from=0, to=1]
			\end{tikzcd}
		\end{equation*}
		We show that the map $\Psi_{\mathfrak{K}, A}$ is bijective. First, we ensure that $\Psi_{\mathfrak{K},A}$ is surjective, i.e., for any morphism $\gamma \in \operatorname{Hom}_{\operatorname{LYA}_\mathbb{K}}(\mathfrak{K}, \mathfrak{L} \otimes A)$ there exists a morphism 
		$\theta \in \operatorname{Hom}_{\operatorname{ComAlg}_\mathbb{K}}(\mathcal{A}(\mathfrak{L},\mathfrak{K}), A)$ for which the following diagram
		\begin{equation}\label{DIAG_univ_prop_of_A(L,-)}
			\begin{tikzcd}
				{\mathfrak{K}} & {\mathfrak{L} \otimes 	\mathcal{A}(\mathfrak{L} ,\mathfrak{K})} \\
				& {\mathfrak{L} \otimes A}
				\arrow["{\Phi_{\mathfrak{K}}}", 	from=1-1, to=1-2]
				\arrow["{id_{\mathfrak{L}} \otimes 	\theta}", from=1-2, to=2-2]
				\arrow["\gamma"', from=1-1, to=2-2]
			\end{tikzcd}
		\end{equation}
		commutes. Let $\gamma \in \operatorname{Hom}_{\operatorname{LYA}_\mathbb{K}}(\mathfrak{K}, \mathfrak{L} \otimes A)$, then for each basis element $f_i$ of $\mathfrak{K}$ (where $i \in I$), we have $\{g_{si} : s=1,2,\ldots,n, i \in I\} \subset A$  such that 
		\begin{equation}\label{EQN_LYA_thm1_gamma}
			\gamma(f_i) = \sum_{s=1}^n e_s \otimes 	g_{si}.
		\end{equation}
		Thus, we have the following expressions
		\begin{align*}
			\gamma[f_i,f_j]_{\mathfrak{K}} ~=~ 	\gamma\left( \sum_{u \in B_{ij}} \alpha_{ij}^u f_u\right) ~=~ \sum_{u \in B_{ij}} \alpha_{ij}^u \gamma(f_u) 
			~=~ \sum_{a = 1}^n e_a \otimes 	\left(\sum_{u \in B_{ij}} \alpha_{ij}^u g_{au}\right), 
		\end{align*} 
		\begin{align*}
			[\gamma(f_i), \gamma(f_j)]_{\mathfrak{L} 	\otimes A} 
			&~=~ \left[ \sum_{s=1}^n e_s \otimes 	g_{si}, \sum_{t=1}^n e_t \otimes g_{tj}\right] 
			~=~ \sum_{s,t = 1}^n [e_s,e_t] \otimes 	g_{si}g_{tj}  \nonumber \\
			&~=~ \sum_{s,t = 1}^n \left(\sum_{a = 1}^n 	\tau_{st}^a e_a \otimes g_{si}g_{tj}\right) ~=~ \sum_{a = 1}^n e_a \otimes \left( \sum_{s,t = 1}^n \tau_{st}^a g_{si} g_{tj}\right).
		\end{align*}
		for all $i,j \in I$. Also, 
		\begin{align*}
			\gamma \{f_i,f_j,f_k\}_{\mathfrak{K}} ~=~ 	\gamma&\left( \sum_{u \in C_{ijk}} \beta_{ijk}^u f_u\right) ~=~ \sum_{u \in C_{ijk}} \beta_{ij}^u \gamma(f_u) 
			~=~ \sum_{a = 1}^n e_a \otimes 	\left(\sum_{u \in C_{ijk}} \beta_{ij}^u g_{au}\right),
		\end{align*}
		\begin{align*} 
			\{\gamma(f_i), \gamma(f_j), 	\gamma(f_k)\}_{\mathfrak{L} \otimes A}  
			&~=~ \left\{\sum_{r=1}^n e_r \otimes 	g_{ri}, \sum_{s=1}^n e_s \otimes g_{sj}, \sum_{t=1}^n e_t \otimes g_{tk} \right\} \nonumber \\
			&~=~ \sum_{r,s,t = 1}^n \{e_r,e_s,e_t\} 	\otimes g_{ri}g_{sj}g_{tk} \nonumber \\
			&~=~ \sum_{r,s,t = 1}^n \left(\sum_{a = 	1}^n \omega_{rst}^a e_a \otimes g_{ri}g_{sj}g_{tk}\right) 
			~=~ \sum_{a = 1}^n e_a \otimes \left( 	\sum_{r,s,t = 1}^n \omega_{rst}^a g_{ri}g_{sj}g_{tk} \right),
		\end{align*}
		for all $i,j,k \in I$.
		Since $\gamma: \mathfrak{K} \to \mathfrak{L} \otimes A$ is a Lie-Yamaguti algebra morphism, it follows that the family of elements $\{g_{si} : s=1,2,\ldots,n, i \in I\}$ must satisfy the following identities in $A$. 
		\begin{equation}\label{EQN_struct_const_alpha_tau_relation}
			\sum_{u \in B_{ij}} \alpha_{ij}^u g_{au} 	~=~ \sum_{s,t = 1}^n \tau_{st}^a g_{si}g_{tj}, \quad	\forall~ i,j \in I~~\text{and}~~ a=1,2,\ldots,n,
		\end{equation}
		\begin{equation}\label{EQN_struct_const_bcommutes.eta_omega_relation}
			\sum_{u \in C_{ijk}} \beta_{ijk}^u g_{au} 	~=~ \sum_{r,s,t = 1}^n \omega_{rst}^a g_{ri}g_{sj}g_{tk},\quad \forall~ i,j,k \in I~~\text{and}~~ a=1,2,\ldots,n.
		\end{equation}
		Now, the universal property of the polynomial algebra yields a unique algebra homomorphism $\mu:\mathbb{K}[X_{si} : s = 1, \ldots, n, i \in I] \longrightarrow A$ such that $\mu(X_{si}) = g_{si}$, for all $s=1,2,\ldots,n$ and $i \in I$. We have $J \subset \operatorname{Ker}(\mu)$ since 
		\[\mu\left(P_{(a,i,j)}^{(\mathfrak{L},\mathfrak{K})}\right) ~=~ \mu\left( \sum_{u \in B_{ij}} \alpha_{ij}^u X_{au} - \sum_{s,t = 1}^n \tau_{st}^a X_{si}X_{tj}\right) ~=~ \sum_{u \in B_{ij}} \alpha_{ij}^u g_{au} - \sum_{s,t = 1}^n \tau_{st}^a g_{si}g_{tj} ~=~ 0,\]
		and
		\begin{align*}
			\mu\left(Q_{(a,i,j,k)}^{(\mathfrak{L},\mathfrak{K})}\right) ~=~ \mu \left(\sum_{u \in C_{ijk}} \beta_{ijk}^u X_{au} - \sum_{r,s,t = 1}^n \omega_{rst}^{a} X_{ri} X_{sj} X_{tk}\right) 
			&~=~ \sum_{u \in C_{ijk}} \beta_{ijk}^u g_{au} - \sum_{r,s,t = 1}^n \omega_{rst}^a g_{ri}g_{sj}g_{tk}
			~=~ 0,
		\end{align*}
		for any $i,j,k \in I$ and $a = 1,\ldots, n$.	
		Hence, there exists a unique algebra morphism $\theta : \mathcal{A}(\mathfrak{L},\mathfrak{K}) \to A$ such that $\theta(x_{si}) = g_{si}$, for all $s=1,2,\ldots,n$ and $i \in I$. 
		\[\begin{tikzcd}
			{k[X_{si} : s=1,\ldots,n, i \in I]} && A \\
			\\
			{\mathcal{A}(\mathfrak{L},\mathfrak{K})}
			\arrow["\mu", from=1-1, to=1-3]
			\arrow["{(\text{quotient map}) ~\pi}"', 	from=1-1, to=3-1]
			\arrow["{\exists! ~\theta }"', dotted, 	from=3-1, to=1-3].
		\end{tikzcd}\]
		Thus, for any $i \in I$ we note that
		\[
			(id_{\mathfrak{L}} \otimes \theta) \circ \Phi_{\mathfrak{K}}(f_i) ~=~ (id_{\mathfrak{L}} \otimes \theta) \left(\sum_{s=1}^n e_s \otimes x_{si}\right) ~=~ \sum_{s=1}^n e_s \otimes g_{si} ~=~ \gamma(f_i).
		\]
		It implies that $(id_{\mathfrak{L}} \otimes \theta) \circ \Phi_{L} = \gamma$. Hence, the map $\Psi_{\mathfrak{K},A}$ surjective. 
		
		\medskip\noindent
		To show $\Psi_{\mathfrak{K},A}$ is injective, we show that $\theta$ is the unique such morphism for which diagram (\ref{DIAG_univ_prop_of_A(L,-)}) commutes. Let $\widetilde{\theta} : \mathcal{A}(\mathfrak{L},\mathfrak{K}) \to A$ be another algebra morphism such that $(id_{\mathfrak{L}} \otimes \widetilde{\theta}) \circ \Phi_{\mathfrak{K}}(f_i) = \gamma(f_i)$ for all $i \in I$. Then 
		\begin{equation*}
			\sum_{s=1}^n e_s \otimes \widetilde{\theta}(x_{si}) = \sum_{s=1}^n e_s \otimes g_{si}.
		\end{equation*} 
		Therefore $\widetilde{\theta}(x_{si}) = g_{si} = \theta(x_{si})$, for all $s=1,2,\ldots,n$ and $i \in I$. Since the set $\{x_{si} : s = 1,2,\ldots,n, i \in I\}$ generates the algebra $\mathcal{A}(\mathfrak{L},\mathfrak{K})$, we obtain $\widetilde{\theta} = \theta$. Thus, proving $\Psi_{\mathfrak{K},A} : \operatorname{Hom}_{\operatorname{ComAlg}_\mathbb{K}}(\mathcal{A}(\mathfrak{L},\mathfrak{K}), A) \longrightarrow \operatorname{Hom}_{\operatorname{LYA}_\mathbb{K}}(\mathfrak{K}, \mathfrak{L} \otimes A)$ is a bijection. 
		
		\medskip
		Finally, we show that $\mathcal{A}(\mathfrak{L},-) : \operatorname{LYA}_\mathbb{K} \longrightarrow \operatorname{ComAlg}_\mathbb{K}$ is a functor which is left adjoint to the functor $\mathfrak{L} \otimes - : \operatorname{ComAlg}_{\mathbb{K}} \to \operatorname{LYA}_\mathbb{K}$. 
		Let $\mathfrak{K}, \mathfrak{K'} \in \operatorname{LYA}_\mathbb{K}$ and $\zeta:\mathfrak{K} \to \mathfrak{K'}$ be a Lie-Yamaguti algebra morphism. Using the bijectivity of $\Psi_{\mathfrak{K},A}$ for the Lie-Yamaguti algebra morphism $\Phi_{\mathfrak{K'}} \circ \zeta : \mathfrak{K} \to \mathfrak{L} \otimes \mathcal{A}(\mathfrak{L},\mathfrak{K'})$, we have a unique algebra morphism $\theta : \mathcal{A}(\mathfrak{L},\mathfrak{K}) \to \mathcal{A}(\mathfrak{L},\mathfrak{K'})$ such that the following diagram
		\begin{equation}\label{DIAG_functoriality_of_A(L,-)}
			\begin{tikzcd}
				{\mathfrak{K}} & {\mathfrak{L} 	\otimes \mathcal{A}(\mathfrak{L} , \mathfrak{K})}
				\\
				{\mathfrak{K'}} & {\mathfrak{L} 	\otimes \mathcal{A}(\mathfrak{L} , \mathfrak{K'})}
				\arrow["\zeta"', from=1-1, to=2-1]
				\arrow["{id_{\mathfrak{L}} \otimes 	\theta}", from=1-2, to=2-2]
				\arrow["{\Phi_{\mathfrak{K}}}", 	from=1-1, to=1-2]
				\arrow["{\Phi_{\mathfrak{K'}}}", 	from=2-1, to=2-2]
			\end{tikzcd}
		\end{equation}
		commutes. We denote this unique morphism $\theta$ by $\mathcal{A}(\mathfrak{L},\zeta)$. With this assignment, $\mathcal{A}(\mathfrak{L},-) : \operatorname{LYA}_\mathfrak{K} \to \operatorname{ComAlg}_\mathfrak{K}$ becomes a functor. It follows that the diagram
		
		\begin{equation*}
			\begin{tikzcd}
				{\operatorname{Hom}_{\operatorname{ComAlg}_{\mathbb{K}}}(\mathcal{A}(\mathfrak{L},\mathfrak{K}), A)} & {\operatorname{Hom}_{\operatorname{ComAlg}_{\mathbb{K}}}(\mathcal{A}(\mathfrak{L},\mathfrak{K'}), A)} & {\operatorname{Hom}_{\operatorname{ComAlg}_{\mathbb{K}}}(\mathcal{A}(\mathfrak{L},\mathfrak{K'}), A')} \\
				{\operatorname{Hom}_{\operatorname{LYA}_{\mathbb{K}}}(\mathfrak{K},\mathfrak{L}\otimes A)} & {\operatorname{Hom}_{\operatorname{LYA}_{\mathbb{K}}}(\mathfrak{K'},\mathfrak{L}\otimes A)} & {\operatorname{Hom}_{\operatorname{LYA}_{\mathbb{K}}}(\mathfrak{K'},\mathfrak{L}\otimes A')}
				\arrow["{\Psi_{\mathfrak{K},A}}"', from=1-1, to=2-1]
				\arrow["{\Psi_{\mathfrak{K'},A}}"', from=1-2, to=2-2]
				\arrow["{\Psi_{\mathfrak{K'},A'}}", from=1-3, to=2-3]
				\arrow[from=1-1, to=1-2]
				\arrow[from=1-2, to=1-3]
				\arrow[from=2-1, to=2-2]
				\arrow[from=2-2, to=2-3]
			\end{tikzcd}
		\end{equation*}
	
		\smallskip\noindent
	 	commutes, showing the naturality of $\Psi_{\mathfrak{K},A}$ in both $\mathfrak{K}$ and $A$. Hence, to conclude, $\mathcal{A}(\mathfrak{L},-)$ is left adjoint functor of the current Lie-Yamaguti algebra functor $\mathfrak{L} \otimes -$. 
	\end{proof}

	\begin{remark}
		The set of all algebra morphisms from  $\mathcal{A}(\mathfrak{L},\mathfrak{K})$ to $\mathbb{K}$ parametrize the set of all Lie-Yamaguti algebra morphism from $\mathfrak{K}$ to $\mathfrak{L}$. This observation follows as a straightforward consequence of the bijection described in (\ref{MAP_hom_set_bijection}) by taking $A = \mathbb{K}$.
	\end{remark}
	
	\begin{corollary}
		\label{COR_LYA_hom_set_bijection}
		Let $\mathfrak{L}$ and $\mathfrak{K}$ be two Lie-Yamaguti algebras with $\mathfrak{L}$ being finite-dimensional. Moreover, let $A$ be a commutative algebra. Then the following map is bijective
		\begin{equation}
			\Psi_{\mathfrak{K},A} : 
			\operatorname{Hom}_{\operatorname{Alg}_\mathbb{K}}(\mathcal{A}(\mathfrak{L},\mathfrak{K}), A) \longrightarrow \operatorname{Hom}_{\operatorname{LYA}_\mathbb{K}}(\mathfrak{K}, \mathfrak{L} \otimes A), \quad \Psi_{\mathfrak{K},A}(\theta) := (id_\mathfrak{L} \otimes \theta) \circ \Phi_\mathfrak{K}.
		\end{equation}
	\end{corollary}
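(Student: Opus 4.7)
The plan is to recognize that this corollary is essentially a restatement of the preceding theorem and requires no new substantive argument, only the observation that a certain pair of hom-sets coincides. Specifically, by construction $\mathcal{A}(\mathfrak{L},\mathfrak{K}) := \mathbb{K}[X_{si} : s=1,\ldots,n, i \in I]/J$ is a quotient of a polynomial algebra, hence is itself a commutative $\mathbb{K}$-algebra. Since the target $A$ is also assumed commutative, any $\mathbb{K}$-algebra homomorphism $\theta : \mathcal{A}(\mathfrak{L},\mathfrak{K}) \to A$ is automatically a morphism of commutative algebras (there are no extra commutativity axioms to check on the morphism level once both objects are commutative).

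Thus, my first step would be to record the identification
\begin{equation*}
\operatorname{Hom}_{\operatorname{Alg}_{\mathbb{K}}}(\mathcal{A}(\mathfrak{L},\mathfrak{K}), A) \;=\; \operatorname{Hom}_{\operatorname{ComAlg}_{\mathbb{K}}}(\mathcal{A}(\mathfrak{L},\mathfrak{K}), A),
\end{equation*}
which holds as equality of sets (both consist of the same underlying maps). Under this identification, the map $\Psi_{\mathfrak{K},A}$ defined in the corollary coincides with the map $\Psi_{\mathfrak{K},A}$ already constructed in the preceding theorem, because the formula $(id_{\mathfrak{L}} \otimes \theta) \circ \Phi_{\mathfrak{K}}$ depends only on $\theta$ as a $\mathbb{K}$-linear map intertwining the two multiplications.

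The second step is simply to invoke the preceding theorem, which established that $\Psi_{\mathfrak{K},A}$ is a bijection between $\operatorname{Hom}_{\operatorname{ComAlg}_{\mathbb{K}}}(\mathcal{A}(\mathfrak{L},\mathfrak{K}), A)$ and $\operatorname{Hom}_{\operatorname{LYA}_{\mathbb{K}}}(\mathfrak{K}, \mathfrak{L} \otimes A)$ as part of the adjunction $\mathcal{A}(\mathfrak{L}, -) \dashv \mathfrak{L} \otimes -$. Combining this with the identification above yields the claimed bijection. There is no genuine obstacle in the argument; the sole conceptual content is the remark that commutativity of $\mathcal{A}(\mathfrak{L},\mathfrak{K})$ is built into its definition, so enlarging the ambient category on the algebraic side from $\operatorname{ComAlg}_{\mathbb{K}}$ to $\operatorname{Alg}_{\mathbb{K}}$ adds no further morphisms. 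The corollary is therefore best viewed as a convenient reformulation of the adjunction theorem in a form where one does not need to keep track of whether the target algebra is presented as commutative.
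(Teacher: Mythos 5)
Your proposal is correct and matches the paper's treatment exactly: the paper gives no separate proof of this corollary, treating it as an immediate consequence of the preceding theorem, precisely because $\operatorname{ComAlg}_{\mathbb{K}}$ is a full subcategory of $\operatorname{Alg}_{\mathbb{K}}$ and $\mathcal{A}(\mathfrak{L},\mathfrak{K})$ is commutative by construction, so the two hom-sets coincide. Your write-up simply makes this implicit identification explicit, which is the right (and only) content of the corollary.
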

	
	\begin{example} 
		If $\mathfrak{L}$ and $\mathfrak{K}$ are abelian Lie-Yamaguti algebras then 
		$\mathcal{A}(\mathfrak{L}, \mathfrak{K}) \cong \mathbb{K}[X_{si} : s=1,\ldots,n, i \in I]$, where $n = dim_{\mathbb{K}}(\mathfrak{L})$ and $|I| = dim_{\mathbb{K}}(\mathfrak{K})$.
	\end{example}

	\begin{example}
		Let $\mathfrak{L}$ be an $n$-dimensional Lie-Yamaguti algebra with structure constants $\{\tau_{ij}^s: i,j,s=1,2,\ldots,n\} \cup \{\omega_{ijk}^s: i,j,k,s=1,2,\ldots,n\}$. Then $\mathcal{A}(\mathfrak{L},\mathbb{K}) \cong \mathbb{K}[X_1,X_2,\ldots,X_n]/J$, where $J$ is the ideal generated by the polynomials $\sum_{s,t=1}^n \tau_{s,t}^a X_s X_t$ and $\sum_{r,s,t=1}^n \omega_{rst}^a X_r X_s X_t$ for all $a=1,\ldots,n$.
 	\end{example}
 
 	\begin{example}
 		Let $\mathfrak{L}$ be an $n$-dimensional Lie-Yamaguti algebra with structure constants $\{\tau_{ij}^s: i,j,s=1,2,\ldots,n\} \cup \{\omega_{ijk}^s: i,j,k,s=1,2,\ldots,n\}$. Then $\mathcal{A}(\mathbb{K},\mathfrak{L}) \cong \mathbb{K}[X_1,X_2,\ldots,X_n]/J$, where $J$ is the ideal generated by the polynomials
 		\begin{equation*}
 			\sum_{u=1}^n \tau_{ij}^u X_{u} \quad \text{for all}~ i,j =1,2,\ldots,n; \quad \text{and} \quad \sum_{u=1}^n \omega_{ijk}^u X_{u} \quad \text{for all}~ i,j,k = 1,2,\ldots,n.
 		\end{equation*}
 		Now, we recall that the derived subalgebra $\mathfrak{L'}$ of $\mathfrak{L}$ is defined as   
 		\begin{equation*}
 			\mathfrak{L'} := \big[\mathfrak{L},\mathfrak{L}\big] + \big\{\mathfrak{L}, \mathfrak{L}, \mathfrak{L}\big\}
 		\end{equation*}
 		and the fact that for any vector space $V$ with a basis $B$, the symmetric algebra $S(V)$ is isomorphic to the polynomial ring $\mathbb{K}[B]$ where elements of $B$ are considered as indeterminates. 
 		Since the polynomials of the form $\sum_{u=1}^n \tau_{ij}^uX_u$ corresponds to the elements of $[\mathfrak{L}, \mathfrak{L}]$ and polynomials of form $\sum_{u=1}^n \omega_{ijk}^u X_{u}$ corresponds to the elements of $\{\mathfrak{L}, \mathfrak{L}, \mathfrak{L}\}$, we use the isomorphism $S(V) \cong \mathbb{K}[B]$ to  conclude that symmetric algebra of $\mathfrak{L}/\mathfrak{L'}$ denoted by $S(\mathfrak{L}/\mathfrak{L'})$ is isomorphic to $\mathbb{K}[X_1,X_2,\ldots,X_n]/J$. Hence, we obtain 
 		\begin{equation*}
 			\mathcal{A}(\mathbb{K},\mathfrak{L}) \cong S(\mathfrak{L}/\mathfrak{L'}).
 		\end{equation*}  
 	\end{example}
	
	\begin{definition} \label{DEF_LYA_universal_algebra}
		Let $\mathfrak{L}$ and $\mathfrak{\mathfrak{K}}$ be two Lie-Yamaguti algebras with $\mathfrak{L}$ being finite dimensional. Then the commutative algebra $\mathcal{A}(\mathfrak{L},\mathfrak{K})$ is called the {\bf universal algebra} of $\mathfrak{L}$ and $\mathfrak{K}$. If $\mathfrak{L} = \mathfrak{K}$, then we denote the universal algebra of $\mathfrak{L}$ simply by $\mathcal{A}(\mathfrak{L})$.
	\end{definition}
	
	\begin{remark}
		Let $(\mathfrak{h},\cdot)$ be a Leibniz algebra. Then $\mathfrak{h}$ becomes a Lie-Yamaguti algebra with the following brackets
		\begin{equation}\label{EQN_leib_to_lie-yamaguti}
			[x,y] := x \cdot y - y \cdot x \quad \forall~ x,y \in \mathfrak{h}, \qquad \{x,y,z\} := -x \cdot (y \cdot z) \quad \forall~ x,y,z \in \mathfrak{h}.
		\end{equation}
		Furthermore, for any Leibniz algebra morphism $f: \mathfrak{h} \to \mathfrak{g}$, $f$ is also a Lie-Yamaguti algebra morphism. Therefore, giving a functor from $\operatorname{Leib}_{\mathbb{K}} \to \operatorname{LYA}_\mathbb{K}$. 
		The universal algebra $\mathcal{A}(\mathfrak{h})$ of the Lie-Yamaguti algebra $(\mathfrak{h},[\cdot,\cdot], \{\cdot,\cdot,\cdot\})$ is a quotient of the universal algebra of the Leibniz algebra $(\mathfrak{h},\cdot)$, constructed in \cite{agore20}.
	\end{remark}

	\begin{definition}
		\label{DEF_LYA_L_univ_poly}
		If $\{\tau_{ij}^s : i,j,s = 1,2,\ldots, n\}$ and $\{\omega_{ijk}^s : i,j,k,s = 1,2,\ldots,n\}$ are structure constants of $\mathfrak{L}$ where $n$ is the dimension of $\mathfrak{L}$. Then, the polynomials defined by
		\begin{equation}\label{DEF_LYA_L_univ_poly1}
			P_{(a,i,j)}^{(\mathfrak{L})} := \sum_{u = 	1}^n \tau_{ij}^u X_{au} - \sum_{s,t = 1}^n \tau_{st}^a X_{si} X_{tj} \in \mathbb{K}[X_{ij} : i,j = 1,\ldots,n], \quad \forall~~ a,i,j = 1,2,\ldots,n,
		\end{equation}
		\begin{equation}\label{DEF_LYA_L_univ_poly2}
			Q_{(a,i,j,k)}^{(\mathfrak{L})} := \sum_{u = 	1}^n \omega_{ijk}^u X_{au} - \sum_{r,s,t = 1}^n \omega_{rst}^a X_{ri} X_{sj} X_{tk} \in \mathbb{K}[X_{ij} : i,j = 1,\ldots,n], \quad \forall~~ a,i,j,k = 1,2,\ldots,n.
		\end{equation}
		are called the {\bf universal polynomials of $\mathfrak{L}$.}
	\end{definition}
	
	\noindent
	The universal algebra $\mathcal{A}(\mathfrak{L})$ satisfies the following universal property.   
	
	\begin{corollary}\label{COR_DIAG_univ_prop_of_A(L,-)}
		Let $\mathfrak{L}$ be a finite-dimensional Lie-Yamaguti algebra of dimension $n$. Then for any commutative algebra $A$ and any Lie-Yamaguti algebra morphism $\gamma: \mathfrak{L} \to \mathfrak{L} \otimes A$, there exists a unique algebra morphism $\theta: \mathcal{A}(\mathfrak{L}) \to A$ such that $\gamma = (id_\mathfrak{L} \otimes \theta) \circ \Phi_{\mathfrak{L}}$, i.e., the following diagram commutes.
		\begin{equation}
			\begin{tikzcd}
				{\mathfrak{L}} & {\mathfrak{L} \otimes 	\mathcal{A}(\mathfrak{L})} \\
				& {\mathfrak{L} \otimes A}
				\arrow["{\Phi_{\mathfrak{L}}}", 	from=1-1, to=1-2]
				\arrow["{id_{\mathfrak{L}} \otimes 	\theta }", from=1-2, to=2-2]
				\arrow["\gamma"', from=1-1, to=2-2]
			\end{tikzcd}
		\end{equation} 
	\end{corollary}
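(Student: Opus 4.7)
The plan is to derive this corollary as an immediate specialization of the preceding theorem (equivalently, of Corollary~\ref{COR_LYA_hom_set_bijection}) by setting $\mathfrak{K} = \mathfrak{L}$. Since $\mathfrak{L}$ is finite-dimensional, the hypothesis of the theorem is met, and so the bijection
\[
\Psi_{\mathfrak{L},A} : \operatorname{Hom}_{\operatorname{ComAlg}_\mathbb{K}}(\mathcal{A}(\mathfrak{L}), A) \longrightarrow \operatorname{Hom}_{\operatorname{LYA}_\mathbb{K}}(\mathfrak{L}, \mathfrak{L} \otimes A), \quad \theta \mapsto (id_\mathfrak{L} \otimes \theta) \circ \Phi_{\mathfrak{L}}
\]
is at our disposal. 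Given any Lie-Yamaguti algebra morphism $\gamma: \mathfrak{L} \to \mathfrak{L} \otimes A$, the desired algebra morphism is then obtained as $\theta := \Psi_{\mathfrak{L},A}^{-1}(\gamma)$, and the factorization identity $\gamma = (id_\mathfrak{L} \otimes \theta) \circ \Phi_{\mathfrak{L}}$ is literally the statement $\Psi_{\mathfrak{L},A}(\theta) = \gamma$. Uniqueness of $\theta$ is the injectivity half of the bijection.

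If one wishes to write the argument without explicitly invoking the theorem, I would retrace its construction in the simpler case $\mathfrak{K} = \mathfrak{L}$. First expand $\gamma(e_i) = \sum_{s=1}^n e_s \otimes g_{si}$ for suitable $g_{si} \in A$; the compatibility of $\gamma$ with the bracket and the ternary bracket then forces the family $(g_{si})$ to satisfy precisely the polynomial relations coming from the universal polynomials $P_{(a,i,j)}^{(\mathfrak{L})}$ and $Q_{(a,i,j,k)}^{(\mathfrak{L})}$ of Definition~\ref{DEF_LYA_L_univ_poly}. The universal property of the polynomial ring supplies a unique algebra morphism $\mu: \mathbb{K}[X_{ij}] \to A$ with $\mu(X_{si}) = g_{si}$; this $\mu$ vanishes on the defining ideal $J$ and hence descends to a unique $\theta: \mathcal{A}(\mathfrak{L}) \to A$ with $\theta(x_{si}) = g_{si}$. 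Commutativity of the triangle is verified directly since $(id_\mathfrak{L} \otimes \theta) \circ \Phi_\mathfrak{L}(e_i) = \sum_s e_s \otimes g_{si} = \gamma(e_i)$.

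For uniqueness, any competing $\widetilde{\theta}$ satisfying the triangle must obey $\sum_s e_s \otimes \widetilde{\theta}(x_{si}) = \sum_s e_s \otimes g_{si}$, and linear independence of the basis $\{e_s\}$ forces $\widetilde{\theta}(x_{si}) = g_{si} = \theta(x_{si})$; since the $x_{si}$ generate $\mathcal{A}(\mathfrak{L})$ as an algebra, $\widetilde{\theta} = \theta$. I do not anticipate a genuine obstacle here, as this corollary is simply the $\mathfrak{K} = \mathfrak{L}$ instance of a theorem already established; the only task is to specialize the adjunction at the object $\mathfrak{L}$ itself.
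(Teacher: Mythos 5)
Your proposal is correct and matches the paper exactly: the corollary is stated there without separate proof precisely because it is the specialization $\mathfrak{K} = \mathfrak{L}$ of the bijection $\Psi_{\mathfrak{K},A}$ established in the adjunction theorem (equivalently Corollary~\ref{COR_LYA_hom_set_bijection}), with existence given by surjectivity and uniqueness by injectivity. Your optional retracing of the construction via the universal polynomials is a faithful replay of the theorem's proof in the special case and adds nothing that needs checking.
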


	\noindent
	Below we give an explicit description of the universal algebra for a family of Heisenberg Lie-Yamaguti algebras introduced in \cite{our-paper}. 
	
	\begin{example}
		A Heisenberg Lie-Yamaguti algebra is a vector space $\mathfrak{H}_n$ generated by $2n+1$ elements ($n \ge 1$) $$\{e_0,e_1,\ldots,e_n,e_{n+1},\ldots,e_{2n}\}.$$The Lie-Yamaguti brackets on $\mathfrak{H}_n$ are given by 
		\[
		[e_i,e_{n+i}] = e_0,\quad \{e_i,e_{n+i},e_i\} = e_0, \quad 1 \le i \le n.
		\]   
		All other brackets on the basis elements are either zero or are determined by the relations of Lie-Yamaguti algebra. Therefore, the structure constants of $\mathfrak{H}_n$, which are $\{\tau_{ij}^s: s,i,j = 0,1,\ldots,2n\}$ and $\{\omega_{ijk}^s: s,i,j,k = 0,1,\ldots,2n\}$, can be described as follows: 
		\begin{eqnarray*}
			\tau_{ij}^s &=& 
			\begin{cases}
				\hspace{2.5mm} 1 \qquad (s=0,~ 1 \le i \le n,~j=n+i); \\
				-1 \qquad (s=0,~ n+1 \le i \le 2n,~ j=i-n); \\
				\hspace{2.5mm} 0 ~\qquad \text{otherwise}.
			\end{cases} \\ \\
			\omega_{ijk}^s &=& 
			\begin{cases}
				\hspace{2.5mm} 1 \qquad (s=0,~ 1 \le i \le n,~ j=n+i,~ k=i);\\
				-1 \qquad (s=0,~ n+1 \le i \le 2n,~ j=i-n,~ k=i);\\
				\hspace{2.5mm} 0 ~\qquad \text{otherwise.}
			\end{cases}
		\end{eqnarray*}
		
		\smallskip\noindent
		Hence, the universal polynomials of $\mathfrak{H}_n$ of the form $P_{(a,i,j)}^{(\mathfrak{H}_n)}$ for $a = 1,2,\ldots,2n$ are given by: 
		\begin{equation*}
			P_{(a,i,j)}^{(\mathfrak{H}_n)} = 
			\begin{cases}
				\hspace{3.0mm} X_{a0} \qquad (1 \le i \le n,~ j = n+i) ; \\
				-X_{a0} \qquad (n+1 \le i \le 2n,~ j = i-n); \\ 
				\hspace{5mm} 0 \hspace{2.5mm}\quad\quad \text{otherwise.} 
			\end{cases}
		\end{equation*}
		and for any $a = 0$ it is given by: 
		\begin{equation*}
			P_{(a,i,j)}^{(\mathfrak{H}_n)} = 
			\begin{cases}
				\hspace{3.0mm} X_{00} 
				- \sum_{s=1}^n X_{si}X_{(n+s)j} + \sum_{s=n+1}^{2n} X_{si}X_{(n-s)j}\qquad (1 \le i \le n,~ j = n+i) ; \\
				-X_{00} - \sum_{s=1}^n X_{si}X_{(n+s)j} + \sum_{s=n+1}^{2n} X_{si}X_{(n-s)j} \qquad (n+1 \le i \le 2n,~ j = i-n); \\ 
				- \sum_{s=1}^n X_{si}X_{(n+s)j} + \sum_{s=n+1}^{2n} X_{si}X_{(n-s)j} \hspace{2.8mm}\qquad\qquad \text{otherwise.} 
			\end{cases}
		\end{equation*}
		Furthermore, the universal polynomials of $\mathfrak{H}_n$ of the form $Q_{(a,i,j,k)}^{(\mathfrak{H}_n)}$ for $a=1,2,\ldots,2n$ are given by: 
		\begin{equation*}
			Q_{(a,i,j,k)}^{(\mathfrak{H}_n)} = 
			\begin{cases}
				\hspace{3.0mm} X_{a0} \qquad (1 \le i \le n,~ j = n+i,~ k=i) ; \\
				-X_{a0} \qquad (n+1 \le i \le 2n,~ j = i-n,~ k=i); \\ 
				\hspace{5mm} 0 \hspace{2.5mm}\quad\quad \text{otherwise.} 
			\end{cases}
		\end{equation*}
		and for any $a = 0$ it is given by: 
		\begin{equation*}
			Q_{(a,i,j,k)}^{(\mathfrak{H}_n)} = 
			\begin{cases}
				\hspace{3.0mm} X_{00} 
				- \sum_{r=1}^n X_{ri}X_{(n+r)j}X_{rk} + \sum_{r=n+1}^{2n} X_{ri}X_{(n-r)j}X_{rk} \qquad (1 \le i \le n,~ j = n+i,~ k = i) ; \\
				-X_{00} 
				- \sum_{r=1}^n X_{ri}X_{(n+r)j}X_{rk} + \sum_{r=n+1}^{2n} X_{ri}X_{(n-r)j}X_{rk} \qquad (n+1 \le i \le 2n,~ j = i-n,~ k=i); \\ 
				- \sum_{r=1}^n X_{ri}X_{(n+r)j}X_{rk} + \sum_{r=n+1}^{2n} X_{ri}X_{(n-r)j}X_{rk} \hspace{6.5mm}\qquad\quad 
				\text{otherwise.}
			\end{cases}
		\end{equation*}
		
		\medskip \noindent	
		Therefore, the universal algebra of $\mathfrak{H}_n$ is given by   
		\begin{equation*}
			\mathcal{A}(\mathfrak{H}_n) \cong \mathbb{K}[X_{ij}: 0 \le i,j \le 2n] \big/ J 
		\end{equation*}
		where $J$ is the ideal generated by the universal polynomials of $\mathfrak{H}_n$ described above. 
 	\end{example}
	
	\bigskip
	\subsection*{\large Bialgebra structure on the universal algebra $\mathcal{A}(\mathfrak{L})$.} ~
	
	\bigskip\noindent
	The universal algebra $\mathcal{A}(\mathfrak{L})$ of a finite-dimensional Lie-Yamaguti algebra $\mathfrak{L}$ admits a bialgebra structure. 
	\begin{theorem}
		Let $\mathfrak{L}$ be a Lie-Yamaguti algebra of dimension $n$. Then there exists a unique bialgebra structure on $\mathcal{A}(\mathfrak{L})$ such that the Lie-Yamaguti algebra morphism $\Phi_{\mathfrak{L}}: \mathfrak{L} \to \mathfrak{L} \otimes \mathcal{A}(\mathfrak{L})$ gives a right $\mathcal{A}(\mathfrak{L})$-comodule structure on $\mathfrak{L}$. Where, the comultiplication and the counit map on $\mathcal{A}(\mathfrak{L})$ are given by 
		\begin{equation}\label{DEF_comult_counit_A(L)}
			\Delta(x_{ij}) = \sum_{s=1}^n x_{is} 	\otimes x_{sj} ~~\text{and}~~ \varepsilon(x_{ij}) = \delta_{ij}, \quad  \forall~~ i,j =1,\ldots,n. 
		\end{equation}
	\end{theorem}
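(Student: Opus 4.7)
The plan is to construct $\Delta$ and $\varepsilon$ by applying the universal property of $\mathcal{A}(\mathfrak{L})$ established in Corollary \ref{COR_DIAG_univ_prop_of_A(L,-)}, and then verify coassociativity and the counit axiom by appealing to the \emph{uniqueness} clause of that universal property. Bialgebra compatibility will be automatic, since both $\Delta$ and $\varepsilon$ are produced as morphisms of commutative algebras.

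First, I would construct $\Delta$ as follows. The composition $\gamma := (\Phi_{\mathfrak{L}} \otimes id_{\mathcal{A}(\mathfrak{L})}) \circ \Phi_{\mathfrak{L}}$ is a Lie-Yamaguti algebra morphism from $\mathfrak{L}$ to $\mathfrak{L} \otimes \bigl( \mathcal{A}(\mathfrak{L}) \otimes \mathcal{A}(\mathfrak{L}) \bigr)$. Because $\mathcal{A}(\mathfrak{L}) \otimes \mathcal{A}(\mathfrak{L})$ is a commutative algebra, Corollary \ref{COR_DIAG_univ_prop_of_A(L,-)} supplies a unique algebra morphism $\Delta : \mathcal{A}(\mathfrak{L}) \to \mathcal{A}(\mathfrak{L}) \otimes \mathcal{A}(\mathfrak{L})$ with $(id_{\mathfrak{L}} \otimes \Delta) \circ \Phi_{\mathfrak{L}} = \gamma$. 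Evaluating both sides on the basis $\{e_i\}$ and comparing coefficients of $e_t \otimes -$ yields $\Delta(x_{ij}) = \sum_s x_{is} \otimes x_{sj}$. For the counit, applying Corollary \ref{COR_DIAG_univ_prop_of_A(L,-)} with $A = \mathbb{K}$ and $\gamma = id_{\mathfrak{L}}$ (via the canonical identification $\mathfrak{L} \cong \mathfrak{L} \otimes \mathbb{K}$) produces a unique algebra morphism $\varepsilon : \mathcal{A}(\mathfrak{L}) \to \mathbb{K}$; the defining equation $(id_{\mathfrak{L}} \otimes \varepsilon) \circ \Phi_{\mathfrak{L}} = id_{\mathfrak{L}}$ forces $\varepsilon(x_{ij}) = \delta_{ij}$.

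Next, I would verify coassociativity by the following standard trick. Both $(\Delta \otimes id) \circ \Delta$ and $(id \otimes \Delta) \circ \Delta$ are algebra morphisms $\mathcal{A}(\mathfrak{L}) \to \mathcal{A}(\mathfrak{L})^{\otimes 3}$. Using the defining equation for $\Delta$ twice, a short diagram chase shows that when each is composed with $\Phi_{\mathfrak{L}}$ (after tensoring with $id_{\mathfrak{L}}$), the result is the iterated coaction $(\Phi_{\mathfrak{L}} \otimes id \otimes id) \circ (\Phi_{\mathfrak{L}} \otimes id) \circ \Phi_{\mathfrak{L}} : \mathfrak{L} \to \mathfrak{L} \otimes \mathcal{A}(\mathfrak{L})^{\otimes 3}$. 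Since $\mathcal{A}(\mathfrak{L})^{\otimes 3}$ is commutative, the uniqueness clause of Corollary \ref{COR_DIAG_univ_prop_of_A(L,-)} forces the two algebra morphisms to coincide. An analogous argument gives the counit identities $(id \otimes \varepsilon) \circ \Delta = id = (\varepsilon \otimes id) \circ \Delta$: one checks that both sides, composed appropriately with $\Phi_{\mathfrak{L}}$, reproduce $\Phi_{\mathfrak{L}}$ itself, and then applies uniqueness.

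Finally, because $\Delta$ and $\varepsilon$ are algebra morphisms into commutative algebras, the multiplicativity conditions built into the bialgebra axioms hold automatically, so $\mathcal{A}(\mathfrak{L})$ carries the required bialgebra structure. The identity defining $\Delta$ is precisely the coassociativity of $\Phi_{\mathfrak{L}}$ as a right $\mathcal{A}(\mathfrak{L})$-coaction, and the identity defining $\varepsilon$ is the counit axiom for this coaction; hence $\Phi_{\mathfrak{L}}$ realizes $\mathfrak{L}$ as a right $\mathcal{A}(\mathfrak{L})$-comodule. Uniqueness of the bialgebra structure is immediate from the uniqueness clauses used to produce $\Delta$ and $\varepsilon$: any other bialgebra structure on $\mathcal{A}(\mathfrak{L})$ turning $\Phi_{\mathfrak{L}}$ into a comodule would give algebra morphisms satisfying the same defining equations and hence coincide with $\Delta$ and $\varepsilon$. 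The only genuinely non-formal step is the coassociativity diagram chase, but it is a direct application of the universal property rather than a computational hurdle.
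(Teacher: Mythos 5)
Your proposal is correct and follows essentially the same route as the paper: both construct $\Delta$ by applying the universal property of $\mathcal{A}(\mathfrak{L})$ to $(\Phi_{\mathfrak{L}} \otimes id_{\mathcal{A}(\mathfrak{L})}) \circ \Phi_{\mathfrak{L}}$ and $\varepsilon$ via the canonical isomorphism $\mathfrak{L} \cong \mathfrak{L} \otimes \mathbb{K}$, read off the formulas on the generators $x_{ij}$, and obtain the comodule axioms as the defining commutative diagrams. The only difference is that you spell out, via the uniqueness clause, the coassociativity, counit, and uniqueness-of-structure verifications that the paper dismisses as easy --- a welcome expansion, not a departure.
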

	
	\begin{proof}
		Note that $\mathcal{A}(\mathfrak{L}) \otimes \mathcal{A}(\mathfrak{L})$ is also an algebra. Consider the following Lie-Yamaguti algebra morphism $f: \mathfrak{L} \to \mathfrak{L} \otimes \mathcal{A}(\mathfrak{L}) \otimes \mathcal{A}(\mathfrak{L})$ given by $f := (\Phi_{\mathfrak{L}}\otimes id_{\mathcal{A}(\mathfrak{L})}) \circ \Phi_{\mathfrak{L}}$. It follows from Corollary \ref{COR_DIAG_univ_prop_of_A(L,-)} that there exists a unique algebra morphism $\Delta: \mathcal{A}(\mathfrak{L}) \to \mathcal{A}(\mathfrak{L}) \otimes \mathcal{A}(\mathfrak{L})$ such that $(id_{\mathfrak{L}} \otimes \Delta) \circ \Phi_{\mathfrak{L}} = f$. In other words, the following diagram 
		\begin{equation}\label{DIAG_bialgebra_A(L)_comult}
			\begin{tikzcd}
				{\mathfrak{L}} & {\mathfrak{L} \otimes 	\mathcal{A}(\mathfrak{L})} \\
				{\mathfrak{L} \otimes 	\mathcal{A}(\mathfrak{L})} 	& {\mathfrak{L} \otimes \mathcal{A}(\mathfrak{L}) \otimes \mathcal{A}(\mathfrak{L})}
				\arrow["{\Phi_{\mathfrak{L}}}", 	from=1-1, to=1-2]
				\arrow["{\Phi_{\mathfrak{L}} \otimes 		id_{\mathcal{A}(\mathfrak{L})}}", from=2-1, to=2-2]
				\arrow["{id_{\mathfrak{L}} \otimes 	\Delta}", 	from=1-2, to=2-2]
				\arrow["{\Phi_{\mathfrak{L}}}"', 	from=1-1, to=2-1]
			\end{tikzcd}
		\end{equation}
		
		\medskip \noindent 
		commutes. Evaluating the above diagram at each $e_i$, for $i=1,\ldots,n$, we get
		\[\sum_{t=1}^n e_t \otimes \Delta(x_{ti}) ~=~(\Phi_{\mathfrak{L}} \otimes id_{\mathcal{A}(\mathfrak{L})})\left(\sum_{s=1}^n e_s \otimes x_{si}\right) ~=~ \sum_{s=1}^n \left(\sum_{t=1}^n e_t \otimes x_{ts}\right) \otimes x_{si} ~=~ \sum_{t=1}^n e_t \otimes \left( \sum_{s=1}^n x_{ts} \otimes x_{si}\right).\]
		This gives us $\Delta (x_{ti}) ~=~ \sum_{s=1}^n x_{ts} \otimes x_{si}$ for all $t,i =1,\ldots, n$. The map $\Delta$ is a coassociative coproduct. Again by Corollary \ref{COR_DIAG_univ_prop_of_A(L,-)}, there exists a unique algebra morphism $\varepsilon: \mathcal{A}(\mathfrak{L}) \to \mathbb{K}$ such that the diagram
		\begin{equation}\label{DIAG_bialgebra_A(L)_counit}
			\begin{tikzcd}
				{\mathfrak{L}} & {\mathfrak{L} \otimes 	\mathcal{A}(\mathfrak{L})}
				\\
				& {\mathfrak{L} \otimes \mathbb{K}}
				\arrow["{\Phi_{\mathfrak{L}}}", 	from=1-1, to=1-2]
				\arrow["{id_{\mathfrak{L}} \otimes 	\varepsilon}", 	from=1-2, to=2-2]
				\arrow[from=1-1, to=2-2]
			\end{tikzcd}
		\end{equation}
		commutes. Here $\mathfrak{L} \to \mathfrak{L} \otimes \mathbb{K}$ is the canonical isomorphism given by $x \mapsto x \otimes 1$ for all $x \in \mathfrak{L}$. Evaluating the diagram at each $e_t$, $t =1,\ldots,n$, we obtain $\varepsilon (x_{ij}) = \delta_{ij}$ for all $i,j =1,\ldots,n$. Verifying that $\varepsilon $ is a count for $\Delta$ is easy. Thus, $\mathcal{A}(\mathfrak{L})$ is a bialgebra.
		
		\medskip
		The commutativity of diagrams (\ref{DIAG_bialgebra_A(L)_comult}) and (\ref{DIAG_bialgebra_A(L)_counit}) implies that the map $\Phi_{\mathfrak{L}}:\mathfrak{L} \to \mathfrak{L} \otimes \mathcal{A}(\mathfrak{L})$ defines a right $\mathcal{A}(\mathfrak{L})$-comodule structure on $\mathfrak{L}$.
	\end{proof} 
	
	\begin{definition}
		Let $\mathfrak{L}$ be a Lie-Yamaguti algebra of dimension $n$. We call the pair $(\mathcal{A}(\mathfrak{L}), \Phi_{\mathfrak{L}})$ the {\bf universal coacting bialgebra} of the Lie-Yamaguti algebra $\mathfrak{L}$.
	\end{definition}
	
	\noindent
	The pair $(\mathcal{A}(\mathfrak{L}), \Phi_{\mathfrak{L}})$  satisfies the following universal property which extends Corollary \ref{COR_DIAG_univ_prop_of_A(L,-)}.
	
	\begin{theorem}\label{THM_bialgebra_A(L)_univ_prop}
		Let $\mathfrak{L}$ be a Lie-Yamaguti algebra of dimension $n$. Then for any commutative bialgebra $B$ and any Lie-Yamaguti algebra morphism $f: \mathfrak{L} \to \mathfrak{L} \otimes B$ making $\mathfrak{L}$ a right $B$-comodule, there exists a unique bialgebra morphism $\theta: \mathcal{A}(\mathfrak{L}) \to B$ such that the following diagram commutes
		\begin{equation}\label{DIAG_bialgebra_A(L)_univ_prop}
			\begin{tikzcd}
				{\mathfrak{L}} & {\mathfrak{L} \otimes 		\mathcal{A}(\mathfrak{L})} \\
				& {\mathfrak{L} \otimes B}
				\arrow["{\Phi_{\mathfrak{L}}}", 	from=1-1, to=1-2]
				\arrow["{id \otimes \theta }", 	from=1-2, to=2-2]
				\arrow["f"', from=1-1, to=2-2].
			\end{tikzcd}
		\end{equation} 
	\end{theorem}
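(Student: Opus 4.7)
The plan is to first extract $\theta$ as an algebra morphism from Corollary~\ref{COR_DIAG_univ_prop_of_A(L,-)}, and then verify that it automatically respects comultiplication and counit thanks to the comodule axioms for $f$ and the defining diagrams \eqref{DIAG_bialgebra_A(L)_comult}--\eqref{DIAG_bialgebra_A(L)_counit}. Since $B$ is a commutative algebra and $f:\mathfrak{L}\to \mathfrak{L}\otimes B$ is a Lie-Yamaguti algebra morphism, Corollary~\ref{COR_DIAG_univ_prop_of_A(L,-)} yields a unique algebra morphism $\theta:\mathcal{A}(\mathfrak{L})\to B$ with $f=(id_{\mathfrak{L}}\otimes\theta)\circ\Phi_{\mathfrak{L}}$. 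Uniqueness of $\theta$ as a bialgebra morphism is then immediate, because any bialgebra morphism satisfying \eqref{DIAG_bialgebra_A(L)_univ_prop} is in particular an algebra morphism with that property.

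The main work is to show that $\theta$ is a coalgebra map. For comultiplication, I would compare the two algebra morphisms $(\theta\otimes\theta)\circ\Delta$ and $\Delta_B\circ\theta$ from $\mathcal{A}(\mathfrak{L})$ to the commutative algebra $B\otimes B$, and appeal to the uniqueness part of Corollary~\ref{COR_DIAG_univ_prop_of_A(L,-)} applied with $A=B\otimes B$. A short diagram chase, using \eqref{DIAG_bialgebra_A(L)_comult} and the identity $(id_{\mathfrak{L}}\otimes\theta)\circ\Phi_{\mathfrak{L}}=f$, gives
\begin{equation*}
(id_{\mathfrak{L}}\otimes(\theta\otimes\theta)\circ\Delta)\circ\Phi_{\mathfrak{L}}
=(f\otimes id_B)\circ f,
\end{equation*}
while applying $(id_{\mathfrak{L}}\otimes\Delta_B)$ to the defining identity for $\theta$ gives
\begin{equation*}
(id_{\mathfrak{L}}\otimes\Delta_B\circ\theta)\circ\Phi_{\mathfrak{L}}
=(id_{\mathfrak{L}}\otimes\Delta_B)\circ f.
\end{equation*}
The right $B$-comodule axiom $(id_{\mathfrak{L}}\otimes\Delta_B)\circ f=(f\otimes id_B)\circ f$ for the pair $(\mathfrak{L},f)$ forces the two induced Lie-Yamaguti morphisms $\mathfrak{K}=\mathfrak{L}\to \mathfrak{L}\otimes(B\otimes B)$ to coincide, so by the bijectivity of $\Psi_{\mathfrak{L},B\otimes B}$ we conclude $(\theta\otimes\theta)\circ\Delta=\Delta_B\circ\theta$.

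For the counit, I would analogously compare the algebra morphisms $\varepsilon_B\circ\theta$ and $\varepsilon:\mathcal{A}(\mathfrak{L})\to\mathbb{K}$. Using diagram~\eqref{DIAG_bialgebra_A(L)_counit} together with the counit axiom $(id_{\mathfrak{L}}\otimes\varepsilon_B)\circ f=id_{\mathfrak{L}}$ of the $B$-comodule structure, both induce (through $\Psi_{\mathfrak{L},\mathbb{K}}$) the identity morphism on $\mathfrak{L}$, hence they agree by the uniqueness in Corollary~\ref{COR_DIAG_univ_prop_of_A(L,-)} with $A=\mathbb{K}$. The only delicate point in the whole argument is the comultiplicative compatibility: one must carefully keep track of tensor factors when converting between $\Phi_{\mathfrak{L}}\otimes id_{\mathcal{A}(\mathfrak{L})}$, $id_{\mathfrak{L}}\otimes\Delta$, and $f\otimes id_B$, but once the two sides are rewritten in terms of $f$ the comodule axiom supplies exactly what is needed.
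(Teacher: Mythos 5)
Your proposal is correct and follows essentially the same route as the paper's proof: extract $\theta$ as an algebra morphism from Corollary~\ref{COR_DIAG_univ_prop_of_A(L,-)}, then prove coalgebra compatibility by comparing $(\theta\otimes\theta)\circ\Delta$ and $\Delta_B\circ\theta$ as algebra morphisms into $B\otimes B$ via the uniqueness in the universal property, with the chain of identities through $(f\otimes id_B)\circ f=(id_{\mathfrak{L}}\otimes\Delta_B)\circ f$ matching the paper's computation exactly, and the counit handled analogously with $A=\mathbb{K}$. Your observation that uniqueness of $\theta$ as a bialgebra map is automatic from uniqueness as an algebra map is also the (implicit) argument in the paper.
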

	
	\begin{proof}
		As $\mathcal{A}(\mathfrak{L})$ is the universal algebra of $\mathfrak{L}$, there exists a unique algebra morphism $\theta: \mathcal{A}(\mathfrak{L}) \to B$ such that diagram (\ref{DIAG_bialgebra_A(L)_univ_prop}) commutes. If we show that $\theta: \mathcal{A}(\mathfrak{L}) \to B$ is also a coalgebra morphism, then we are done. For that we need to show the following diagrams
		\begin{equation}\label{DIAG_bialgebra_A(L)_univ_prop_1}
			\begin{tikzcd}
				\mathcal{A}(\mathfrak{L}) & B && 		\mathcal{A}(\mathfrak{L}) & B \\
				{\mathcal{A}(\mathfrak{L}) \otimes 		\mathcal{A}(\mathfrak{L})} & {B \otimes B} &&& {\mathbb{K}}
				\arrow["\theta", from=1-1, to=1-2]
				\arrow["{\theta \otimes \theta}", 	from=2-1, 	to=2-2]
				\arrow["\Delta"', from=1-1, to=2-1]
				\arrow["{\Delta_B}", from=1-2, to=2-2]
				\arrow["\theta", from=1-4, to=1-5]
				\arrow["{\varepsilon_B}", from=1-5, 	to=2-5]
				\arrow["\varepsilon"', from=1-4, to=2-5]
			\end{tikzcd}
		\end{equation}
		are commutative. To show the commutativity of the first diagram, we use the universal property of $\mathcal{A}(\mathfrak{L})$. Considering $B \otimes B$ as a commutative algebra, there exists a unique algebra morphism $\psi: \mathcal{A}(\mathfrak{L}) \to B \otimes B$ such that the following diagram 
		\begin{equation}\label{DIAG_bialgebra_A(L)_univ_prop_2}
			\begin{tikzcd}
				{\mathfrak{L}} && {\mathfrak{L} \otimes
				\mathcal{A}(\mathfrak{L})} \\
				{\mathfrak{L} \otimes 	\mathcal{A}(\mathfrak{L})} && {\mathfrak{L} \otimes B \otimes B}
				\arrow["{id_{\mathfrak{L}} \otimes 	\psi}", 	from=1-3, to=2-3]
				\arrow["{\Phi_{\mathfrak{L}}}"', 	from=1-1, 	to=2-1]
				\arrow["{id_{\mathfrak{L}} \otimes 	(\Delta_{B} 	\circ \theta)}", from=2-1, to=2-3]
				\arrow["{\Phi_{\mathfrak{L}}}", 	from=1-1, 	to=1-3].
			\end{tikzcd}
		\end{equation}
		is commutative (Note that since $\theta$ and $\Delta_B$ are both algebra morphisms, we have $\Delta_B \circ \theta : \mathcal{A}(\mathfrak{L}) \to B \otimes B$ as an algebra morphism. Hence, making $id_{\mathfrak{L}} \otimes (\Delta_B \circ \theta) \circ \Phi_{\mathfrak{L}} : \mathfrak{L} \to \mathfrak{L} \otimes B \otimes B$ a Lie-Yamaguti algebra morphism). 
		
		\medskip
		Now, we show that the algebra morphism $(\theta \otimes \theta) \circ \Delta: \mathcal{A}(\mathfrak{L}) \to B \otimes B$ is a choice for $\psi$ by showing 
		$$\Omega := (id_{\mathfrak{L}} \otimes ((\theta \otimes \theta) \circ \Delta)) \circ \Phi_{\mathfrak{L}}  ~=~ (id_{\mathfrak{L}} \otimes (\Delta_B \circ \theta)) \circ \Phi_{\mathfrak{L}}.$$
		Since $f: \mathfrak{L} \to \mathfrak{L} \otimes B$ gives a right $B$-comodule structure on $\mathfrak{L}$ and for any morphisms $f,f',g,g'$ we have $(f \otimes g) \circ (f' \otimes g') = (f \circ f') \otimes (g \circ g')$, we obtain the following identity 
		\begin{eqnarray*}
			(id_{\mathfrak{L}} \otimes ((\theta \otimes 	\theta) \circ \Delta)) \circ \Phi_{\mathfrak{L}} 
			&=& (id_{\mathfrak{L}} \otimes \theta 	\otimes \theta) \circ \underbrace{(id_{\mathfrak{L}} \otimes \Delta) \circ \Phi_{\mathfrak{L}}}\\
			&\stackrel{(\ref{DIAG_bialgebra_A(L)_comult})}{=}& (id_{\mathfrak{L}} \otimes \theta \otimes \theta) \circ (\Phi_{\mathfrak{L}} \otimes id_{\mathcal{A}({\mathfrak{L}})}) \circ \Phi_{\mathfrak{L}}\\
			&=& (\underbrace{(id_{\mathfrak{L}} \otimes 	\theta \circ \Phi_{\mathfrak{L}})} \otimes (\theta 
			\circ id_{\mathcal{A}(\mathfrak{L})})) 	\circ \Phi_{\mathfrak{L}}\\
			&\stackrel{(\ref{DIAG_bialgebra_A(L)_univ_prop})}{=}& (f \otimes \theta) \circ \Phi_{\mathfrak{L}} \\
			&=& (f \otimes id_{B}) \circ 	\underbrace{(id_{\mathfrak{L}} \otimes \theta) \circ \Phi_{\mathfrak{L}}}\\
			&\stackrel{(\ref{DIAG_bialgebra_A(L)_univ_prop})}{=}& (f \otimes id_{B}) \circ f \\
			&=& (id_{\mathfrak{L}} \otimes \Delta_{B}) 	\circ f \\
			&\stackrel{(\ref{DIAG_bialgebra_A(L)_univ_prop})}{=}& (id_{\mathfrak{L}} \otimes \Delta_{B}) \circ (id_{\mathfrak{L}} \otimes \theta) \circ \Phi_{\mathfrak{L}} \\
			&=& (id_{\mathfrak{L}} \otimes \Delta_{B} 	\circ \theta) \circ \Phi_{\mathfrak{L}}.
		\end{eqnarray*}
		This shows that $(\theta \otimes \theta) \circ \Phi_{\mathfrak{L}}$ is a valid choice for $\psi$. Again, using the universal property of $\mathcal{A}(\mathfrak{L})$ we get the following diagrams 
		\[\begin{tikzcd}
			{\mathfrak{L}} & {\mathfrak{L} \otimes 	\mathcal{A} (\mathfrak{L})} && {\mathfrak{L}} & {\mathfrak{L} \otimes \mathcal{A} (\mathfrak{L})} \\
			& {\mathfrak{L} \otimes B \otimes B,} &&& 	{\mathfrak{L} \otimes B \otimes B.}
			\arrow["{\Phi_{\mathfrak{L}}}", from=1-1, 	to=1-2]
			\arrow["{\Phi_{\mathfrak{L}}}", from=1-4, 	to=1-5]
			\arrow["{id_{\mathfrak{L}} \otimes (\theta 	\otimes \theta) \circ \Delta}", from=1-2, to=2-2]
			\arrow["{id_{\mathfrak{L}} \otimes 	\Delta_{B} \circ \theta}", from=1-5, to=2-5]
			\arrow["\Omega"', from=1-4, to=2-5]
			\arrow["\Omega"', from=1-1, to=2-2]
		\end{tikzcd}\]
		Since there exists only one algebra morphism from $\mathcal{A}(\mathfrak{L})$ to $B \otimes B$ making the above diagram commutative. Hence, $(\theta \otimes \theta) \circ \Delta ~=~ \Delta_{B} \circ \theta$. Thus giving us the commutative diagram 
		\[\begin{tikzcd}
			{\mathcal{A}(\mathfrak{L})} & B \\
			{\mathcal{A}(\mathfrak{L}) \otimes 	\mathcal{A}(\mathfrak{L})} & {B \otimes B}
			\arrow["\theta", from=1-1, to=1-2]
			\arrow["{\theta \otimes \theta }", 	from=2-1, to=2-2]
			\arrow["\Delta"', from=1-1, to=2-1]
			\arrow["{\Delta_{B}}", from=1-2, to=2-2].
		\end{tikzcd}\]
		
		\noindent
		Similarly, one can show the commutativity of the second diagram in (\ref{DIAG_bialgebra_A(L)_univ_prop_1}), i.e., 
		\[\begin{tikzcd}
			{\mathcal{A}(\mathfrak{L})} & B \\
			& {\mathbb{K}.}
			\arrow["{\theta }", from=1-1, to=1-2]
			\arrow["\varepsilon"', from=1-1, to=2-2]
			\arrow["{\varepsilon_B}", from=1-2, to=2-2]
		\end{tikzcd}\]
		Hence, the map $\theta:\mathcal{A}(\mathfrak{L}) \to B$ is a bialgebra morphism.  
	\end{proof}
	
	\medskip
	Therefore, for any finite-dimensional Lie-Yamaguti algebra $\mathfrak{L}$, we have constructed a commutative bialgebra $\mathcal{A}(\mathfrak{L})$.
We denote the category of commutative bialgebras by $\operatorname{ComBiAlg}_{\mathbb{K}}$ and the category of commutative Hopf algebras by $\operatorname{ComHopf}_{\mathbb{K}}$. From Section \ref{sec-2}, let us recall that a commutative bialgebra generates a free commutative Hopf algebra. Also, recall from \cite{takeuchi71} that assigning the free commutative Hopf algebra to a commutative bialgebra defines a functor $L: \operatorname{ComBiAlg_{\mathbb{K}}} \to \operatorname{ComHopf_{\mathbb{K}}}$ that is left adjoint to the forgetful functor $F:\operatorname{ComHopf_{\mathbb{K}}} \to \operatorname{ComBiAlg_{\mathbb{K}}}$. We denote by $\mu:\mathds{1}_{\operatorname{ComBiAlg_{\mathbb{K}}}} \to FL,$ the unit of the adjunction $L \dashv F$.
	
	\begin{definition}
		Let $\mathfrak{L}$ be a finite-dimensional Lie-Yamaguti algebra over $\mathbb{K}$. The pair $\big(\mathcal{H}(\mathfrak{L}):= L(\mathcal{A}(\mathfrak{L}))~,~\mathcal{X}_{\mathfrak{L}}:= (id_{\mathfrak{L}} \otimes \mu_{\mathcal{A}(\mathfrak{L})}) \circ \Phi_{\mathfrak{L}}\big)$
		is called the {\bf universal coacting Hopf algebra of $\mathfrak{L}$}.
	\end{definition}
	
	The pair $(\mathcal{H}(\mathfrak{L}), \mathcal{X}_{\mathfrak{L}})$ satisfies the following universal property. 	
	\begin{theorem}\label{THM_hopf_univ_prop}
		Let $\mathfrak{L}$ be a finite-dimensional Lie-Yamaguti algebra. Then for any commutative Hopf algebra $H$ and any Lie-Yamaguti algebra morphism $f: \mathfrak{L} \to \mathfrak{L} \otimes H$ that makes $\mathfrak{L}$ into a right $H$-comodule there exists a unique Hopf algebra morphism $g: \mathcal{H}(\mathfrak{L}) \to H$ such that the following diagram is commutative
		\begin{equation}\label{DIAG_hopf_univ_prop}
			\begin{tikzcd}
				{\mathfrak{L}} & {\mathfrak{L} 
				\otimes \mathcal{H}(\mathfrak{L}) 
				} \\
				& {\mathfrak{L} \otimes H}
				\arrow["{\mathcal{X}_{\mathfrak{L}}}", 	from=1-1, 	to=1-2]
				\arrow["{id_{\mathfrak{L}} \otimes g 	}", 	from=1-2, to=2-2]
				\arrow["f"', from=1-1, to=2-2].
			\end{tikzcd}
		\end{equation}
	\end{theorem}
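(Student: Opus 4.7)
The plan is a two-stage adjunction argument: first reduce to the already established universal property of the bialgebra $\mathcal{A}(\mathfrak{L})$ from Theorem \ref{THM_bialgebra_A(L)_univ_prop}, and then transport the resulting bialgebra morphism across the adjunction $L \dashv F$ to obtain a Hopf algebra morphism. Since a commutative Hopf algebra is in particular a commutative bialgebra via the forgetful functor $F$, the datum $f:\mathfrak{L}\to\mathfrak{L}\otimes H$ already fits the hypothesis of the bialgebra universal property.

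Concretely, the first step is to view $H$ as an object of $\operatorname{ComBiAlg}_{\mathbb{K}}$ by applying $F$ and invoke Theorem \ref{THM_bialgebra_A(L)_univ_prop} to produce a unique bialgebra morphism $\theta:\mathcal{A}(\mathfrak{L})\to F(H)$ such that
\begin{equation*}
(id_{\mathfrak{L}}\otimes \theta)\circ \Phi_{\mathfrak{L}} \;=\; f.
\end{equation*}
The second step is to feed $\theta$ through the adjunction bijection
\begin{equation*}
\operatorname{Hom}_{\operatorname{ComHopf}_{\mathbb{K}}}\bigl(L(\mathcal{A}(\mathfrak{L})),H\bigr) \;\cong\; \operatorname{Hom}_{\operatorname{ComBiAlg}_{\mathbb{K}}}\bigl(\mathcal{A}(\mathfrak{L}),F(H)\bigr),
\end{equation*}
yielding a unique Hopf algebra morphism $g:\mathcal{H}(\mathfrak{L})\to H$ with $F(g)\circ \mu_{\mathcal{A}(\mathfrak{L})}=\theta$. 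A direct computation, using the definition $\mathcal{X}_{\mathfrak{L}}=(id_{\mathfrak{L}}\otimes \mu_{\mathcal{A}(\mathfrak{L})})\circ \Phi_{\mathfrak{L}}$ and the factorization above, then gives
\begin{equation*}
(id_{\mathfrak{L}}\otimes g)\circ \mathcal{X}_{\mathfrak{L}} \;=\; (id_{\mathfrak{L}}\otimes (g\circ \mu_{\mathcal{A}(\mathfrak{L})}))\circ \Phi_{\mathfrak{L}} \;=\; (id_{\mathfrak{L}}\otimes \theta)\circ \Phi_{\mathfrak{L}} \;=\; f,
\end{equation*}
which is exactly the commutativity required by diagram (\ref{DIAG_hopf_univ_prop}).

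For uniqueness, suppose $g':\mathcal{H}(\mathfrak{L})\to H$ is any Hopf algebra morphism with $(id_{\mathfrak{L}}\otimes g')\circ \mathcal{X}_{\mathfrak{L}}=f$. Set $\theta' := F(g')\circ \mu_{\mathcal{A}(\mathfrak{L})}$; this is a bialgebra morphism from $\mathcal{A}(\mathfrak{L})$ to $F(H)$ and the same computation shows $(id_{\mathfrak{L}}\otimes \theta')\circ \Phi_{\mathfrak{L}}=f$. The uniqueness clause of Theorem \ref{THM_bialgebra_A(L)_univ_prop} forces $\theta'=\theta$, and the adjunction bijection then forces $g'=g$.

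I expect no serious obstacle here: both universal properties have been proved, and the argument is essentially formal bookkeeping with the unit $\mu$ of the adjunction $L\dashv F$. The only place requiring mild care is verifying that the bialgebra morphism obtained from a Hopf morphism via $F(g)\circ \mu_{\mathcal{A}(\mathfrak{L})}$ genuinely satisfies the hypothesis of Theorem \ref{THM_bialgebra_A(L)_univ_prop}, which amounts to checking that $f$, by virtue of being an $H$-comodule structure for a Hopf algebra $H$, is a fortiori an $F(H)$-comodule structure in $\operatorname{ComBiAlg}_{\mathbb{K}}$ — which is immediate from the definition of $F$.
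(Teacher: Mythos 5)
Your proposal is correct and follows essentially the same two-stage argument as the paper: first invoke the universal property of the bialgebra $\mathcal{A}(\mathfrak{L})$ (Theorem \ref{THM_bialgebra_A(L)_univ_prop}) to get the bialgebra morphism $\theta$, then transport it across the adjunction $L \dashv F$ to obtain $g$, verifying commutativity via $g \circ \mu_{\mathcal{A}(\mathfrak{L})} = \theta$. Your uniqueness argument, passing a candidate $g'$ back through $\theta' = F(g') \circ \mu_{\mathcal{A}(\mathfrak{L})}$ and using the uniqueness clauses of both the bialgebra universal property and the adjunction bijection, is exactly what the paper's brief ``it follows'' leaves implicit, so if anything you state it more completely.
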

	
	\begin{proof}
		Let $H$ be a commutative Hopf algebra together with a Lie-Yamaguti algebra morphism $f: \mathfrak{L} \to \mathfrak{L} \otimes H$, which makes $\mathfrak{L}$ into right a H-comodule. Using Theorem \ref{THM_bialgebra_A(L)_univ_prop}, we obtain a unique bialgebra morphism $\theta: \mathcal{A}(\mathfrak{L}) \to H$, which makes the following diagram 
		\begin{equation}\label{DIAG_hopf_univ_prop_1}
			\begin{tikzcd}
				{\mathfrak{L}} & {\mathfrak{L} \otimes 		\mathcal{A}(\mathfrak{L})} \\ 
				& {\mathfrak{L} \otimes H}
				\arrow["{\Phi_{\mathfrak{L}}}", 	from=1-1, to=1-2]
				\arrow["{id_{\mathfrak{L}} \otimes 	\theta }", 	from=1-2, to=2-2]
				\arrow["f"', from=1-1, to=2-2].
			\end{tikzcd}
		\end{equation}	
 commutative. The adjunction $L \dashv F$ gives us a unique Hopf algebra morphism $g: L(\mathcal{A}(\mathfrak{L})) \to H$ such that the diagram
		\begin{equation}\label{DIAG_hopf_univ_prop_2}
			\begin{tikzcd}
				{\mathcal{A}(\mathfrak{L})} & 		{L(\mathcal{A}(\mathfrak{L}))} \\
				& H. \\
				\arrow["{\mu_{\mathcal{A}(\mathfrak{L})}}", 	from=1-1, to=1-2]
				\arrow["\theta"', from=1-1, to=2-2]
				\arrow["g", from=1-2, to=2-2]
			\end{tikzcd}
		\end{equation}		
		commutes. It follows that $g: \mathcal{H}(\mathfrak{L})=L(\mathcal{A}(\mathfrak{L})) \to H$ is the unique Hopf algebra morphism such that the diagram (\ref{DIAG_hopf_univ_prop}) commutes. In particular,
		\begin{eqnarray*}
			(id_{\mathfrak{L}} \otimes g) \circ (id_{\mathfrak{L}} \otimes 	\mu_{\mathcal{A}(\mathfrak{L})}) \circ \Phi_{\mathfrak{L}} 
			&=& (id_{\mathfrak{L}} \otimes 	\underbrace{g \circ \mu_{\mathcal{A}(\mathfrak{L})}}) 	\circ \Phi_{\mathfrak{L}} \\
			&\stackrel{(\ref{DIAG_hopf_univ_prop_2})}{=}& (id_{\mathfrak{L}} \otimes \theta) \circ \Phi_{\mathfrak{L}} \\
			&\stackrel{(\ref{DIAG_hopf_univ_prop_1})}{=}& f,
		\end{eqnarray*}
		which leads to the commutativity of the diagram (\ref{DIAG_hopf_univ_prop}). Hence, proving the theorem. 	
	\end{proof}
	
Note that it follows from the above Theorem \ref{THM_hopf_univ_prop} that the pair $(\mathcal{H}(\mathfrak{L}), \mathcal{X}_{\mathfrak{L}})$ is the initial object in the category of all commutative Hopf algebras that coact on the Lie-Yamaguti algebra $\mathfrak{L}$.

\medskip
\section{\large Functors between module categories} \label{sec-4}
	This section gives a representation-theoretic version of Manin-Tambara's constructions for Lie-Yamaguti algebras. We show that these universal constructions are functorial. First, we fix some notations for this section.
	\begin{itemize}
		\item For any given algebra $A$ and a Lie-Yamaguti algebra $\mathfrak{L}$, we denote by $A^{\mathcal{M}}$ the category of left $A$-modules and by $\mathfrak{L}^{\mathcal{M}}$ the category of $\mathfrak{L}$-modules.
		\smallskip
		\item Recall that $\mathcal{A}(\mathfrak{L},\mathfrak{K})$ denotes the universal algebra of $\mathfrak{L}$ and $\mathfrak{K}$, given in Definition \ref{DEF_LYA_universal_algebra}. By an $\mathcal{A}$-module we mean a $\mathcal{A}(\mathfrak{L},\mathfrak{K})$-module.
	
\item Let $\mathfrak{L}$ and $\mathfrak{K}$ be two arbitrary Lie-Yamaguti algebras with $\mathfrak{L}$ being a finite dimensional vector space. Let $\{e_1,e_2,\ldots,e_n\}$ and $\{f_i : i \in I\}$ be two fixed basis of $\mathfrak{L}$ and $\mathfrak{K}$, respectively. Also, let $\{\tau_{ij}^s: i,j,s = 1,\ldots,n\} \cup \{\omega_{ijk}^{s}: i,j,k,s = 1,\ldots,n\}$ be the structure constants of $\mathfrak{L}$, i.e., 
	\begin{equation}\label{EQN_sec4_L_struct_const}
		[e_i,e_j]_{\mathfrak{L}} = \sum_{s=1}^n \tau_{ij}^s e_s, \quad \forall~ i,j=1,\ldots,n \quad \text{and} \quad \{e_i,e_j,e_k\}_{\mathfrak{L}} = \sum_{s=1}^{n} \omega_{ijk}^s e_s, \quad \forall~ i,j,k=1,\ldots,n. 
	\end{equation}
	Let $B_{ij} \subset I$ be a finite subset of $I$ for $i,j \in I$, and $C_{ijk} \subset I$ be a finite subset of $I$ for $i,j,k \in I$ such that 
	\begin{equation}\label{EQN_sec4_K_struct_const}
		[f_i,f_j]_{\mathfrak{K}} = \sum_{u \in B_{ij}} \alpha_{ij}^uf_u \quad \text{and} \quad 
		\{f_i,f_j,f_k\}_{\mathfrak{K}} = \sum_{u \in C_{ijk}} \beta_{ijk}^uf_u.
	\end{equation}	
	\end{itemize}

\noindent In the following result, we show that the tensor product of an $\mathfrak{L}$-module and an $\mathcal{A}$-module is a $\mathfrak{K}$-module.

	\begin{theorem}\label{THM_K_module_UxW}
		Let $(U,\rho, D, \theta)$ be an $\mathfrak{L}$-module and $(W, \cdot)$ be an $\mathcal{A}$-module. Then $U \otimes W$ becomes a $\mathfrak{K}$-module with the following module maps
		\begin{eqnarray}
			\rho_{U \otimes W}(f_p) (u \otimes w) &:=& \sum_{i=1}^{n} \rho(e_i)(u) \otimes x_{ip} \cdot w, \qquad \forall p \in I,\label{EQN_sec4_rep_map1}\\
			D_{U \otimes W}(f_p,f_q)(u \otimes w) &:=& \sum_{i,j=1}^{n} D(e_i,e_j)(u) \otimes (x_{ip}x_{jq}) \cdot w, \qquad \forall~ p,q \in I, \label{EQN_sec4_rep_map2}\\
			\theta_{U \otimes W}(f_p,f_q)(u \otimes w) &:=& \sum_{i,j=1}^{n} \theta(e_i,e_j)(u) \otimes (x_{ip}x_{jq}) \cdot w, \qquad \forall p,q \in I.
			\label{EQN_sec4_rep_map3}
		\end{eqnarray} 
	\end{theorem}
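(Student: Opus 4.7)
The plan is to verify directly each of the six Lie-Yamaguti module axioms \eqref{R1}--\eqref{R6} for the triple $(\rho_{U\otimes W}, D_{U\otimes W}, \theta_{U\otimes W})$ on $U\otimes W$, expressing everything on basis vectors $f_p, f_q, f_r, f_s \in \mathfrak{K}$ applied to simple tensors $u\otimes w$. The strategy in each case is uniform: expand the left-hand side via the definitions \eqref{EQN_sec4_rep_map1}--\eqref{EQN_sec4_rep_map3}, use the fact that $W$ is an $\mathcal{A}(\mathfrak{L},\mathfrak{K})$-module (so iterated actions collapse into actions by products $x_{ip}x_{jq}$, $x_{ip}x_{jq}x_{kr}$, etc.), apply the corresponding axiom for $U$ as an $\mathfrak{L}$-module to rewrite the $U$-component, and finally use the defining relations \eqref{EQN_LYA_univ_poly1} and \eqref{EQN_LYA_univ_poly2} of $\mathcal{A}(\mathfrak{L},\mathfrak{K})$ to convert $\sum_{s,t}\tau_{st}^{a} x_{sp}x_{tq}$ and $\sum_{r,s,t}\omega_{rst}^{a} x_{rp}x_{sq}x_{tr}$ into linear combinations of the $x_{au}$'s indexed by $B_{pq}$ and $C_{pqr}$. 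This produces exactly the right-hand side in each axiom, reindexed through the structure constants $\alpha_{ij}^{u}$, $\beta_{ijk}^{u}$ of $\mathfrak{K}$.

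As an indicative template, consider \eqref{R5}: starting with $\bigl[D_{U\otimes W}(f_p,f_q),\rho_{U\otimes W}(f_r)\bigr](u\otimes w)$, the commutativity of the $\mathcal{A}$-action lets us write this as $\sum_{i,j,k}[D(e_i,e_j),\rho(e_k)](u)\otimes (x_{ip}x_{jq}x_{kr})\cdot w$; applying \eqref{R5} for $U$ yields $\sum_{i,j,k,a}\omega_{ijk}^{a}\rho(e_a)(u)\otimes (x_{ip}x_{jq}x_{kr})\cdot w$; and \eqref{EQN_LYA_univ_poly2} collapses the inner triple sum to $\sum_{v\in C_{pqr}}\beta_{pqr}^{v} x_{av}\cdot w$, which repackages as $\rho_{U\otimes W}(\{f_p,f_q,f_r\}_{\mathfrak{K}})(u\otimes w)$. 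Axioms \eqref{R1}, \eqref{R2}, \eqref{R3}, \eqref{R6} admit entirely analogous treatments: \eqref{R1} and \eqref{R2}--\eqref{R3} each require the quadratic relation \eqref{EQN_LYA_univ_poly1} once, while \eqref{R6} mirrors the computation above for \eqref{R5} but on both sides of the bracket, using \eqref{EQN_LYA_univ_poly2} twice.

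The main obstacle will be axiom \eqref{R4}, which is genuinely quartic in the module maps and mixes $\theta$ with $D$. Expanding $\theta_{U\otimes W}(f_s,f_t)\theta_{U\otimes W}(f_p,f_r)(u\otimes w)$ (and the analogous terms) produces quadruple sums whose $W$-factors are of the shape $(x_{ip}x_{jr})(x_{ks}x_{\ell t})\cdot w$; commutativity of $\mathcal{A}$ lets us regroup these as $(x_{ip}\,x_{ks}\,x_{\ell t})\cdot (x_{jr}\cdot w)$ or similar, so that after applying \eqref{R4} for $U$, the crucial term $\theta(a,\{b,c,d\})$ on the $U$-side pairs with a factor to which \eqref{EQN_LYA_univ_poly2} can be applied, producing the $\beta^{u}_{rst}$ coefficients needed to recover $\theta_{U\otimes W}(f_p,\{f_r,f_s,f_t\}_{\mathfrak{K}})$. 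The delicate part is choosing the correct regrouping of the $x$-products so that the quadratic and cubic $\mathcal{A}$-relations can be invoked on the intended factors; I expect to prove two short algebraic identities in $\mathcal{A}(\mathfrak{L},\mathfrak{K})$ (a symmetric one for the $\theta\theta$ piece and one converting $\sum \omega_{\bullet}x_{\bullet}x_{\bullet}x_{\bullet}$ to a $\beta$-expression) and to assemble \eqref{R4} from these together with \eqref{R4} on $U$. Once \eqref{R4} is handled, the remaining axioms are straightforward bookkeeping along the template above, and the theorem follows.
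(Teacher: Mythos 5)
Your proposal is correct and follows essentially the same route as the paper's proof: a direct verification of axioms \eqref{R1}--\eqref{R6} on simple tensors, using commutativity of $\mathcal{A}(\mathfrak{L},\mathfrak{K})$ to collapse iterated actions into monomials, the corresponding $\mathfrak{L}$-module axiom for $U$, and the relations \eqref{EQN_LYA_univ_poly1}--\eqref{EQN_LYA_univ_poly2} to reindex through the structure constants of $\mathfrak{K}$ (the paper writes out only \eqref{R1} and declares the rest similar). Your anticipated difficulty with \eqref{R4} in fact dissolves: after reindexing, all four terms carry the common monomial $x_{ip}x_{jq}x_{kr}x_{ls}$ (the cubic relation is needed only once, for the $\theta(a,\{b,c,d\})$ term), so no extra algebraic identities beyond commutativity are required.
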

	
	\medskip
	\begin{proof}
		Using the fact $(U,\rho,D,\theta)$ is an $\mathfrak{L}$-module and $\mathcal{A}(\mathfrak{L},\mathfrak{K})$ is a commutative algebra, we prove that $(U \otimes W, \rho_{U \otimes W}, D_{
		U \otimes W}, \theta_{U \otimes W})$ is a $\mathfrak{K}$-module. Let us verify that (\ref{R1}) holds, i.e.,  
		\begin{equation*}
			\underbrace{D_{U \otimes W}(f_p,f_q)}_{(1)} + \underbrace{\theta_{U \otimes W}(f_p,f_q)}_{(2)} - \underbrace{\theta_{U \otimes W}(f_q,f_p)}_{(3)} ~=~ 	
			\underbrace{[\rho_{U \otimes W}(f_p),\rho_{U \otimes W}(f_q)]}_{(4)} - 
			\underbrace{\rho_{U \otimes W}([f_p,f_q])}_{(5)}.
		\end{equation*}
		Evaluating the left-hand side for an arbitrary element $u \otimes w \in U \otimes W$, we get 
		\begin{align*}
			\sum_{i,j=1}^{n} &D(e_i,e_j)(u) \otimes (x_{ip}x_{jq})\cdot w + \sum_{i,j=1}^{n} \theta(e_i,e_j)(u) \otimes (x_{ip}x_{jq})\cdot w
			- \sum_{i,j=1}^n \theta(e_j,e_i)(u) \otimes (x_{jq}x_{ip})\cdot w \\
			&=
			\sum_{i,j=1}^n\Big(D(e_i,e_j)(u) + \theta(e_i,e_j)(u) - \theta(e_j,e_i)(u)\Big) \otimes (x_{ip}x_{jq}) \cdot w \\
			&= \sum_{i,j=1}^{n} \Big([\rho(e_i),\rho(e_j)](u) - \rho([e_i,e_j])(u)\Big) \otimes (x_{ip}x_{jq}) \cdot w\\
			&= 
			\sum_{i,j=1}^{n} [\rho(e_i),\rho(e_j)](u) \otimes (x_{ip}x_{jq}) \cdot w ~- \sum_{i,j=1}^{n} \rho([e_i,e_j])(u) \otimes (x_{ip}x_{jq}) \cdot w\\
			&= 
			\underbrace{\big[\rho_{U \otimes W} (f_p), \rho_{U \otimes W} (f_q)\big](u \otimes w)}_{(4)} 
			~-~ \underbrace{\sum_{i,j=1}^{n} \rho([e_i,e_j])(u) \otimes (x_{ip}x_{jq}) \cdot w}_{(a)}.
		\end{align*}
		To check that (\ref{R1}) holds, it is enough to show $(5) = (a)$. it follows from the following calculation.
		\begin{eqnarray*}
			\rho_{U \otimes W}([f_p,f_q])(u \otimes w) 
			&\stackrel{(\ref{EQN_sec4_K_struct_const})}{=}& \rho_{U \otimes W}\Big(\sum_{r \in B_{pq}}\alpha_{pq}^r f_r\Big)(u \otimes w) \\
			&=& 
			\sum_{r \in B_{pq}} \alpha_{pq}^r ~\rho_{U \otimes W}(f_r)(u \otimes w) \\
			&\stackrel{(\ref{EQN_sec4_rep_map1})}{=}& 	
			\sum_{r \in B_{pq}} \Big(\sum_{s=1}^n \rho(e_s)(u) \otimes x_{sr} \cdot w\Big) \\
			&=& 
			\sum_{s=1}^n \rho(e_s)(u) \otimes \Big(\sum_{r \in B_{pq}} \alpha_{pq}^r ~x_{sr}\Big) \cdot w\\
			&\stackrel{(\ref{EQN_LYA_univ_poly1})}{=}&
			\sum_{s=1}^n \rho(e_s)(u) \otimes \Big(\sum_{i,j=1}^n \tau_{ij}^s x_{ip}x_{jq}\Big) \cdot w \\
			&=& 
			\sum_{i,j=1}^n \Big(\sum_{s=1}^n \tau_{ij}^s \rho(e_s)\Big)(u) \otimes (x_{ip}x_{jq}) \cdot w \\
			&\stackrel{(\ref{EQN_sec4_L_struct_const})}{=}& 
			\sum_{i,j = 1}^n \rho([e_i,e_j]) (u) \otimes (x_{ip}x_{jq}) \cdot w.		
		\end{eqnarray*}
		Similarly, one can verify that (\ref{R2}--\ref{R6}) hold. Thus, making $(U \otimes W, \rho_{U \otimes W}, D_{U \otimes W} , \theta_{U \otimes W})$ a $\mathfrak{K}$-module.   
	\end{proof}

	\smallskip
	\noindent The following proposition shows that the above construction is functorial. 
	
	\begin{proposition}\label{LEM_functor}
		For any object $(U,\rho,D,\theta)$ in $ \mathfrak{L}^{\mathcal{M}}$, the mapping  
		\begin{equation*}
			U \otimes - : \mathcal{A}^{\mathcal{M}} \to \mathfrak{K}^{\mathcal{M}}
		\end{equation*}
		gives a functor from the category of $\mathcal{A}$-modules to the category of $\mathfrak{K}$-modules.
	\end{proposition}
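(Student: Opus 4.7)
The plan is to exhibit the functor $U \otimes -$ explicitly on morphisms and then verify that the assignment respects the $\mathfrak{K}$-module structures given in Theorem \ref{THM_K_module_UxW}. On objects, the functor is already defined: for any $\mathcal{A}$-module $(W, \cdot)$, the triple $(\rho_{U \otimes W}, D_{U \otimes W}, \theta_{U \otimes W})$ produced by \eqref{EQN_sec4_rep_map1}--\eqref{EQN_sec4_rep_map3} makes $U \otimes W$ into a $\mathfrak{K}$-module. On morphisms, given any $\mathcal{A}$-module morphism $g : (W,\cdot) \to (W',\cdot')$, I would set
\[
(U \otimes -)(g) := id_U \otimes g : U \otimes W \to U \otimes W'.
\]

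The key step is showing that $id_U \otimes g$ is a $\mathfrak{K}$-module morphism, i.e.\ that it intertwines $\rho_{U \otimes W}, D_{U \otimes W}, \theta_{U \otimes W}$ with their counterparts on $U \otimes W'$. For the $\rho$-compatibility, I would take $p \in I$ and an elementary tensor $u \otimes w$ and compute
\[
(id_U \otimes g)\Bigl(\rho_{U \otimes W}(f_p)(u \otimes w)\Bigr) = \sum_{i=1}^n \rho(e_i)(u) \otimes g(x_{ip} \cdot w),
\]
and then use that $g$ is $\mathcal{A}$-linear to replace $g(x_{ip} \cdot w)$ by $x_{ip} \cdot' g(w)$, which gives exactly $\rho_{U \otimes W'}(f_p)(u \otimes g(w))$. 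The verifications for $D_{U \otimes W}$ and $\theta_{U \otimes W}$ are identical in spirit, using $g(x_{ip}x_{jq} \cdot w) = (x_{ip}x_{jq}) \cdot' g(w)$; the fact that $\mathcal{A}(\mathfrak{L},\mathfrak{K})$ is commutative plays no role here, as we only need $\mathcal{A}$-linearity of $g$.

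Functoriality is then immediate: for the identity morphism $id_W$, we have $id_U \otimes id_W = id_{U \otimes W}$, and for a composable pair $g : W \to W'$ and $h : W' \to W''$ of $\mathcal{A}$-module morphisms, the standard tensor identity $(id_U \otimes h) \circ (id_U \otimes g) = id_U \otimes (h \circ g)$ gives $(U \otimes -)(h \circ g) = (U \otimes -)(h) \circ (U \otimes -)(g)$. I expect no genuine obstacle; the only substantive content is the $\mathcal{A}$-linearity check above, and that is a one-line reduction using \eqref{EQN_sec4_rep_map1}--\eqref{EQN_sec4_rep_map3} together with the defining property of an $\mathcal{A}$-module morphism.
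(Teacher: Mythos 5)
Your proposal is correct and follows essentially the same route as the paper's own proof: the action on morphisms is defined as $id_U \otimes g$, the intertwining with $\rho_{U\otimes W}$, $D_{U\otimes W}$, $\theta_{U\otimes W}$ is reduced to the $\mathcal{A}$-linearity of $g$ on the elements $x_{ip}$ and $x_{ip}x_{jq}$, and functoriality follows from the standard tensor identities for identities and composition. Your side remark that commutativity of $\mathcal{A}(\mathfrak{L},\mathfrak{K})$ is not needed at this step is accurate and a slight sharpening of the paper's presentation.
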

	
	\begin{proof}
	Theorem \ref{THM_K_module_UxW} shows that for any $\mathcal{A}$-module $W$, $U \otimes W$ is a $\mathfrak{K}$-module. For any $\mathcal{A}$-module morphism $g: W \to W'$, let us define $U \otimes g: U \otimes W \to U \otimes W'$ as $U\otimes g:=id_U \otimes g$. Let us observe that
		\begin{align*}
			(id_U \otimes g) \rho_{U \otimes W}(f_i)(u \otimes w) ~&\stackrel{(\ref{EQN_sec4_rep_map1})}{=}~
			\sum_{s=1}^n \rho_U(e_s)(u) \otimes g(x_{si} \cdot w) 
			~=~
			\sum_{s=1}^n  \rho_U(e_s)(u) \otimes x_{si} \cdot g(w) \\
			~&\stackrel{(\ref{EQN_sec4_rep_map1})}{=}~ 
			\rho_{U \otimes W'}(u \otimes g(w)),
		\end{align*}
		for all $i \in I$, $u \in U$, and $w \in W$.
		Also,
		\begin{align*}
			(id_U \otimes g) D_{U \otimes W}(f_i,f_j)(u \otimes w)
			&~\stackrel{(\ref{EQN_sec4_rep_map2})}{=}~
			\sum_{s,p=1}^n D_U(e_s,e_p)(u) \otimes g(x_{si}x_{pj} \cdot w) \\
			&~=~
			\sum_{s,p=1}^n D_U(e_s,e_p)(u) \otimes x_{si}x_{pj} \cdot g(w)
			~\stackrel{(\ref{EQN_sec4_rep_map2})}{=}
			D_{U \otimes W'}(f_i,f_j)(u \otimes w).      
		\end{align*}
		Similarly, it follows that $(id_U \otimes g) \theta_{U \otimes W} (f_i,f_j) = \theta_{U \otimes W'}(f_i,f_j)$. Thus, $U \otimes g: U \otimes W \to U \otimes W'$ is a $\mathfrak{K}$-module morphism.				If $g = id_W$, then $U\otimes id_{W}=id_{U\otimes W}$. Furthermore, if $f:W \to W'$ and $g:W' \to W''$ are two $\mathcal{A}$-module morphisms,  then by definition it follows that $U\otimes (g \circ f) = (U\otimes g) \circ (U\otimes f)$.  Hence, the mapping $U\otimes -$ gives a functor.
	\end{proof}
	
	\medskip
	We now discuss a representation-theoretic version of the universal algebra of $\mathfrak{L}$ and $\mathfrak{K}$ (Definition \ref{DEF_LYA_universal_algebra}).	
	
	\begin{definition}\label{DEF_univ_A_module}
		Let $U$ be an $\mathfrak{L}$-module and $V$ be a $\mathfrak{K}$-module. The {\bf universal $\mathcal{A}$-module of $U$ and $V$} is a pair $\big(\mathcal{U}(U,V), \Gamma_{\mathcal{U}(U,V)}\big)$, where $\mathcal{U}(U,V)$ is an $\mathcal{A} $-module and $\Gamma_{\mathcal{U}(U,V)} : V \to U \otimes \mathcal{U}(U,V)$ is a $\mathfrak{K}$-module morphism such that for any other pair $(W,f)$ consisting of an $\mathcal{A}$-module $W$ and a $\mathfrak{K}$-module morphism $f: V \to U \otimes W$ there exists a unique $\mathcal{A}$-module morphism $g: \mathcal{U}(U,V) \to W$ making the following diagram commutative
		\begin{equation} \label{DIAG_univ_Amodule}
			\begin{tikzcd}
				V & {U \otimes \mathcal{U}(U,V)} \\
				& {U \otimes W}
				\arrow["{\Gamma_{\mathcal{U}(U,V)}}", from=1-1, to=1-2]
				\arrow["{id_{g} \otimes g}", from=1-2, to=2-2]
				\arrow["f"', from=1-1, to=2-2].
			\end{tikzcd}
		\end{equation}
	\end{definition}
	
	\begin{remark}
		Consider a category, whose objects are the pairs $(W,f)$ consisting of an $\mathcal{A}$-module $W$ and a morphism of $\mathfrak{K}$-module $f:V \to U \otimes W$, and a morphism between two objects $(W,f)$ and $(W',f')$ are defined to be $\mathcal{A}$-module map $h: W \to W'$ satisfying $(id_U \otimes h) \circ f = f'$. Then, the universal $\mathcal{A}$-module of $U$ and $V$ (if it exists) is an initial object in this category.  
	\end{remark}

	\begin{remark} \label{REM 4.5}
		Let $U$ be an $\mathfrak{L}$-module, $V$ be a $\mathfrak{K}$-module and $W$ be an $\mathcal{A}$-module. The above definition gives a bijective correspondence between the set of all $\mathfrak{K}$-module maps $f: V \to U \otimes W$ and the set of all $\mathcal{A}$-module maps $g: \mathcal{U}(U, V) \to W$. 
	\end{remark}
	With the above notations, we have the following result on the existence of universal $\mathcal{A}$-module of $U$ and $V$.
		
	\begin{theorem}\label{THM_univ_Amodule_U_and_V}
		If $U$ is a finite dimensional $\mathfrak{L}$-module and $V$ is an arbitrary $\mathfrak{K}$-module then the universal $\mathcal{A}$-module of $U$ and $V$ exists. 
	\end{theorem}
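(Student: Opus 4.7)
The plan is to construct $\mathcal{U}(U,V)$ as a presentation: take a free $\mathcal{A}$-module on one generator per pair (basis element of $V$, basis element of $U$), and quotient by the relations forced by requiring $\Gamma_{\mathcal{U}(U,V)}$ to be a $\mathfrak{K}$-module morphism. Fix a basis $\{u_1, \ldots, u_m\}$ of $U$ and a basis $\{v_l : l \in L\}$ of $V$, and record the structure constants of the $\mathfrak{L}$-action on $U$ and the $\mathfrak{K}$-action on $V$ (the latter with finite support in $L$ for each input). Let $F$ denote the free $\mathcal{A}(\mathfrak{L},\mathfrak{K})$-module on symbols $\{Y_{il} : 1 \le i \le m,\ l \in L\}$.

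Next, forcing a candidate map $\Gamma(v_l) := \sum_{i=1}^{m} u_i \otimes Y_{il}$ to intertwine the three representation maps $\rho_{\mathfrak{K}}$, $D_{\mathfrak{K}}$, and $\theta_{\mathfrak{K}}$ with $\rho_{U \otimes F}$, $D_{U \otimes F}$, and $\theta_{U \otimes F}$ via formulas (\ref{EQN_sec4_rep_map1})--(\ref{EQN_sec4_rep_map3}), and then equating coefficients on the basis $\{u_j\}$ of $U$ (this is where finite-dimensionality of $U$ is essential, so that coefficient comparison in $U \otimes F$ is unambiguous), produces three explicit families of elements in $F$. Let $N \subseteq F$ be the $\mathcal{A}$-submodule they generate and define $\mathcal{U}(U,V) := F/N$, with $y_{il} := \widehat{Y_{il}}$ the class of $Y_{il}$ and $\Gamma_{\mathcal{U}(U,V)}(v_l) := \sum_{i=1}^{m} u_i \otimes y_{il}$. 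By construction the intertwining relations hold on basis elements, and therefore $\Gamma_{\mathcal{U}(U,V)}$ is a $\mathfrak{K}$-module morphism.

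For the universal property, given an $\mathcal{A}$-module $W$ and a $\mathfrak{K}$-module morphism $f : V \to U \otimes W$, use the basis $\{u_i\}$ to write $f(v_l) = \sum_{i=1}^{m} u_i \otimes w_{il}$ uniquely, and define an $\mathcal{A}$-linear map $\widetilde{g} : F \to W$ on free generators by $\widetilde{g}(Y_{il}) := w_{il}$. The very condition that $f$ is a $\mathfrak{K}$-module morphism translates, after expanding via (\ref{EQN_sec4_rep_map1})--(\ref{EQN_sec4_rep_map3}) and matching coefficients of each $u_j$, into the statement that $\widetilde{g}$ annihilates every defining generator of $N$. Hence $\widetilde{g}$ descends to an $\mathcal{A}$-module morphism $g : \mathcal{U}(U,V) \to W$ with $(id_U \otimes g) \circ \Gamma_{\mathcal{U}(U,V)} = f$. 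Uniqueness is immediate: any $\mathcal{A}$-linear map $g'$ fitting in diagram (\ref{DIAG_univ_Amodule}) satisfies $\sum_i u_i \otimes g'(y_{il}) = f(v_l) = \sum_i u_i \otimes g(y_{il})$, and linear independence of $\{u_i\}$ forces $g'(y_{il}) = g(y_{il})$ on a generating set.

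The main obstacle is the bookkeeping step in which one shows that the three families of relations defining $N$ capture the $\mathfrak{K}$-module morphism condition exactly, in an ``if and only if'' manner. The computation is an analogue of the verification of axiom (\ref{R1}) carried out in the proof of Theorem \ref{THM_K_module_UxW}, where the identities (\ref{EQN_LYA_univ_poly1}) and (\ref{EQN_LYA_univ_poly2}) are used to convert sums weighted by structure constants of $\mathfrak{K}$ into sums weighted by structure constants of $\mathfrak{L}$; the same mechanism, applied to the $D$ and $\theta$ relations, should produce the remaining two families of defining relations cleanly. Everything else is routine diagram-chasing, and the finite-dimensionality of $U$ is used only to guarantee that coefficient comparison on the basis $\{u_1,\ldots,u_m\}$ is well defined both for constructing the relations and for reading off $g$ from $f$.
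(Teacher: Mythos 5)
Your proposal is correct and takes essentially the same route as the paper's proof: the paper likewise forms the free $\mathcal{A}$-module $\mathcal{T}(U,V)$ on generators $\{Y_{sr}\}$, quotients by the submodule generated by the relations (\ref{EQN_module_univ_Poly1})--(\ref{EQN_module_univ_Poly3}) obtained by comparing coefficients against the basis $\{u_1,\ldots,u_m\}$ of $U$, checks that $\Gamma_{\mathcal{U}(U,V)}$ intertwines $\rho$, $D$, $\theta$ exactly as in your sketch, and obtains $g$ by descending the free-module map $\overline{g}(Y_{sr})=w_{sr}$, with uniqueness from linear independence of the $u_i$ on the generating set $\{y_{sr}\}$. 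Your diagnosis of where finite-dimensionality of $U$ enters also matches the paper's usage: it guarantees that the coefficient of each $u_p$ in the defining relations and in the reconstruction of $g$ from $f$ is a finite sum.
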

	
	\begin{proof}
		Let $\{u_1,\ldots,u_m\}$ be a $\mathbb{K}$-basis of $\mathfrak{L}$-module. Let $\gamma_{ir}^s,~ \delta_{ijr}^s ,~ \varepsilon_{ijr}^s \in \mathbb{K}$ be the structure constants of $U$ with respect to its $\mathfrak{L}$-module structure $(U,\rho_U,D_U,\theta_U)$, i.e., 
		\begin{eqnarray}
			\rho_{U}(e_i)(u_k) &=& \sum_{s=1}^m \gamma_{ik}^{s} u_s, \qquad \forall i = 1,\ldots, n,~k=1, \ldots, m,  \label{EQN_Lmodule_struct_const1} \\
			D_U(e_i,e_j)(u_k) &=& \sum_{s=1}^m \delta_{ijk}^s u_s, \qquad \forall i,j=1,\ldots,n,~k=1, \ldots, m, \label{EQN_Lmodule_struct_const2} \\
			\theta_U(e_i,e_j)(u_k) &=& \sum_{s=1}^m \varepsilon_{ijk}^s u_s, \qquad \forall i,j=1,\ldots,n, ~k=1, \ldots, m. \label{EQN_Lmodule_struct_const3}
		\end{eqnarray}
 Also, let $\{v_r : r \in J\}$ be a basis of $V$ and let $(V,\rho_V,D_V,\theta_V)$ denote its $\mathfrak{K}$-module structure with structure constants  $\mu_{ir}^s,~ \eta_{ijr}^s ,~ \sigma_{ijr}^s \in \mathbb{K}$. i.e.,
		\begin{eqnarray}
			\rho_V(f_i)(v_r) &=& \sum_{s \in S_{ir}} \mu_{ir}^s v_s, ~~\quad \forall~ i \in I,~~  r \in J, \label{EQN_Kmodule_struct_const1} \\
			D_V(f_i,f_j)(v_r) &=& \sum_{s \in T_{ijr}} \eta_{ijr}^s v_s, \quad \forall~ i,j \in I,~~ r \in J, \label{EQN_Kmodule_struct_const2} \\
			\theta_V(f_i,f_j)(v_r) &=& \sum_{s \in R_{ijr}} \sigma_{ijr}^s v_s, \quad \forall~ i,j \in I,~~ r \in J, \label{EQN_Kmodule_struct_const3}
		\end{eqnarray}
		where $S_{ir}$, $T_{ijr}$, and $R_{ijr}$ are finite subsets of $J$. Now, we construct the universal $\mathcal{A}$-module of $U$ and $V$. 
		
		\medskip
		Let $\mathcal{T}(U,V)$ be the free $\mathcal{A}$-module on the set $\{Y_{sr} : s = 1,\ldots,m, r \in J\}$ and denote by $\mathcal{U}(U,V)$ the quotient of $\mathcal{T}(U,V)$ by its $\mathcal{A}$-submodule generated by the following elements
		\begin{equation} \label{EQN_module_univ_Poly1}
			\sum_{s \in S_{ir}} \mu_{ir}^s Y_{ps} ~-~ \sum_{t=1}^m \sum_{k=1}^m \gamma_{kt}^p ~x_{ki} \bullet Y_{tr}, \quad \forall~ i \in I,~~ r \in J, 
		\end{equation}
		\begin{equation} \label{EQN_module_univ_Poly2}
			\sum_{s \in T_{ijr}} \eta_{ijr}^s Y_{ps} ~-~ \sum_{t=1}^m\sum_{k=1}^m\sum_{l=1}^m \delta_{lkt}^p (x_{li} x_{kj}) \bullet Y_{tr}, \quad \forall~ i,j \in I,~~ r \in J,
		\end{equation}
		\begin{equation} \label{EQN_module_univ_Poly3}
			\sum_{s \in R_{ijr}} \sigma_{ijr}^s Y_{ps} ~-~ \sum_{t=1}^m\sum_{k=1}^m\sum_{l=1}^m \varepsilon_{lkt}^p (x_{li} x_{kj}) \bullet Y_{tr}, \quad \forall~ i,j \in I,~~ r \in J, 
		\end{equation}
		where $p = 1, \ldots, m$ and $\bullet$ denotes the $\mathcal{A}$-module action on $\mathcal{T}(U,V)$. Let $y_{sr} := \widehat{Y}_{sr}$, where $\widehat{Y}_{sr}$ denote the equivalence class of $Y_{sr}$ in $\mathcal{U}(U,V)$. It follows that the following relations hold in $\mathcal{U}(U,V)$ 
		\begin{equation} \label{EQN_module_univ_poly1}
			\sum_{s \in S_{ir}} \mu_{ir}^s y_{ps} ~=~ \sum_{t=1}^m \sum_{k=1}^m \gamma_{kt}^p ~x_{ki} \bullet y_{tr}, \quad \forall~ i \in I,~~ r \in J, 
		\end{equation}
		\begin{equation} \label{EQN_module_univ_poly2}
			\sum_{s \in T_{ijr}} \eta_{ijr}^s y_{ps} ~=~ \sum_{t=1}^m\sum_{k=1}^m\sum_{l=1}^m \delta_{lkt}^p (x_{li} x_{kj}) \bullet y_{tr}, \quad \forall~ i,j \in I,~~ r \in J, 
		\end{equation}
		\begin{equation} \label{EQN_module_univ_poly3}
			\sum_{s \in R_{ijr}} \sigma_{ijr}^s y_{ps} ~=~ \sum_{t=1}^m\sum_{k=1}^m\sum_{l=1}^m \varepsilon_{lkt}^p (x_{li} x_{kj}) \bullet y_{tr}, \quad \forall~ i,j \in I,~~ r \in J,
		\end{equation}
		for all $p = 1, \ldots, m$.
		
		\medskip
		We define a morphism of $\mathfrak{K}$-module, $\Gamma_{\mathcal{U}(U,V)} : V \to U \otimes \mathcal{U}(U,V)$ as follows
		\begin{equation}\label{EQN_module_univ_morphism}
			\Gamma_{\mathcal{U}(U,V)}(v_r) := \sum_{p=1}^m u_p \otimes y_{pr}  \quad \forall~ r \in J. 
		\end{equation}
		Now, we show that $\Gamma_{\mathcal{U}(U,V)}$ is a $\mathfrak{K}$-module morphism by showing that the following diagrams
		\[\begin{tikzcd}
			V && {U \otimes \mathcal{U}(U,V)} &&& V && {U \otimes \mathcal{U}(U,V)} \\
			V && {U \otimes \mathcal{U}(U,V)} &&& V && {U \otimes \mathcal{U}(U,V)}
			\arrow["{\rho_V(f_i)}"', from=1-1, to=2-1]
			\arrow["{\rho_{U \otimes \mathcal{U}(U,V)}(f_i)}", from=1-3, to=2-3]
			\arrow["{\Gamma_{\mathcal{U}(U,V)}}", from=1-1, to=1-3]
			\arrow["{\Gamma_{\mathcal{U}(U,V)}}", from=2-1, to=2-3]
			\arrow["{D_{U \otimes \mathcal{U}(U,V)}(f_i,f_j)}", from=1-8, to=2-8]
			\arrow["{D_V(f_i,f_j)}"', from=1-6, to=2-6]
			\arrow["{\Gamma_{\mathcal{U}(U,V)}}", from=1-6, to=1-8]
			\arrow["{\Gamma_{\mathcal{U}(U,V)}}", from=2-6, to=2-8]
		\end{tikzcd}\]
		\[\begin{tikzcd}
			V && {U \otimes \mathcal{U}(U,V)} \\
			V && {U \otimes \mathcal{U}(U,V)}
			\arrow["{\theta_V(f_i,f_j)}"', from=1-1, to=2-1]
			\arrow["{\theta_{U \otimes \mathcal{U}(U,V)}(f_i,f_j)}", from=1-3, to=2-3]
			\arrow["{\Gamma_{\mathcal{U}(U,V)}}", from=1-1, to=1-3]
			\arrow["{\Gamma_{\mathcal{U}(U,V)}}", from=2-1, to=2-3]
		\end{tikzcd}\]
		commute for all $i,j \in I$. Let us verify the commutativity of the first diagram.  
		\begin{eqnarray*}
			\Gamma_{\mathcal{U}(U,V)}(\rho_{V}(f_i)(v_r)) 
			&\stackrel{(\ref{EQN_Kmodule_struct_const1})}{=}&
			\Gamma_{\mathcal{U}(U,V)}\Big(\sum_{s \in S_{ir}} \mu_{ir}^s v_s\Big) 
			~=~
			\sum_{s \in S_{ir}} \mu_{ir}^s ~\Gamma_{\mathcal{U}(U,V)}(v_s) \\
			&\stackrel{(\ref{EQN_module_univ_morphism})}{=}& 
			\sum_{s \in S_{ir}} \sum_{p=1}^m \mu_{ir}^s u_p \otimes y_{ps} 
			~=~ 
			\sum_{p=1}^m \Big(u_p \otimes \sum_{s \in S_{ir}} \mu_{ir}^s y_{ps}\Big) \\
			&\stackrel{(\ref{EQN_module_univ_poly1})}{=}&
			\sum_{p=1}^m \Big(u_p \otimes \sum_{t=1}^m \sum_{k=1}^m \gamma_{kt}^p x_{ki} \bullet y_{tr}\Big)
			~=~ 
			\sum_{t,k=1}^m \Big(\sum_{p=1}^m \gamma_{kt}^p u_p\Big) \otimes x_{ki} \bullet y_{tr} \\
			&\stackrel{(\ref{EQN_Lmodule_struct_const1})}{=}&
			\sum_{t,k=1}^m \rho_U(e_k)(u_t) \otimes x_{ki} \bullet y_{tr} 
			~=~ 
			\sum_{t=1}^m \Big(\sum_{k=1}^m \rho_{U}(e_k)(u_t) \otimes x_{ki} \bullet y_{tr}\Big)\\
			&\stackrel{(\ref{EQN_sec4_rep_map1})}{=}&
			\sum_{t=1}^m \Big(\rho_{U \otimes \mathcal{U}(U,V)}(f_i) (u_t \otimes y_{tr})\Big) 
			~=~ 
			\rho_{U \otimes \mathcal{U}(U,V)}(f_i) \Big(\sum_{t=1}^m u_t \otimes y_{tr}\Big)\\
			&\stackrel{(\ref{EQN_module_univ_morphism})}{=}&
			\rho_{U \otimes \mathcal{U}(U,V)}(f_i)\big(\Gamma_{\mathcal{U}(U,V)}(v_r)\big).
		\end{eqnarray*}
		To show that the second diagram commutes, we consider the following expression.
		\begin{eqnarray*}
			\Gamma_{\mathcal{U}(U,V)}(D_V(f_i,f_j)(v_r)) 
			&\stackrel{(\ref{EQN_Kmodule_struct_const2})}{=}&
			\Gamma_{\mathcal{U}(U,V)}\Big(\sum_{s \in T_{ijr}} \eta_{ijr}^s v_s\Big) 
			~=~ 
			\sum_{s \in T_{ijr}} \eta_{ijr}^s ~\Gamma_{\mathcal{U}(U,V)}(v_s) \\
			&\stackrel{(\ref{EQN_module_univ_morphism})}{=}&
			\sum_{s \in T_{ijr}} \eta_{ijr}^s \Big(\sum_{p=1}^{m} u_p \otimes y_{ps}\Big)
			~=~ 
			\sum_{p=1}^m u_p \otimes \sum_{s \in T_{ijr}} \eta_{ijr}^s y_{ps} \\
			&\stackrel{(\ref{EQN_module_univ_poly2})}{=}&
			\sum_{p=1}^m u_p \otimes \sum_{t=1}^m \sum_{k=1}^m \sum_{l=1}^m \delta_{lkt}^p (x_{li} x_{kj}) \bullet y_{tr} \\
			&=&
			\sum_{t,k,l=1}^m \Big(\sum_{p=1}^m \delta_{lkt}^p u_p\Big) \otimes (x_{li} x_{kj}) \bullet y_{tr} \\
			&\stackrel{(\ref{EQN_Lmodule_struct_const2}),(\ref{EQN_sec4_rep_map2})}{=}& 
			\sum_{t=1}^m D_{U \otimes \mathcal{U}(U,V)}(f_i,f_j) (u_t \otimes y_{tr}) 
			~=~
			D_{U \otimes \mathcal{U}(U,V)}(f_i,f_j) \Big(\sum_{t=1}^m u_t \otimes y_{tr}\Big)\\
			&\stackrel{(\ref{EQN_module_univ_morphism}) }{=}&
			D_{U \otimes \mathcal{U}(U,V)}(f_i,f_j)\big(\Gamma_{\mathcal{U}(U,V)}(v_r)\big).
		\end{eqnarray*}
		Similarly, one can show that the third diagram is also commutative. Hence, the map $\Gamma_{\mathcal{U}(U,V)} : V \to U \otimes \mathcal{U}(U,V)$ is a $\mathfrak{K}$-module morphism. 
		
		\medskip
		It is left to show that the pair $\big(\mathcal{U}(U,V), \Gamma_{\mathcal{U}(U,V)}\big)$ satisfies the universal property \eqref{DIAG_univ_Amodule}. Consider a pair $(W,f)$ consisting of an $\mathcal{A}$-module $W$ and a morphism of $\mathfrak{K}$-modules $f: V \to U \otimes W$. Let $\{w_{sr}: s = 1, \ldots, m, r \in J\}$ be a family of elements of $W$ such that 
		\begin{equation}\label{EQN_W_morphism}
			f(v_r) = \sum_{s=1}^m u_s \otimes w_{sr}, \quad \forall~ r \in J.  
		\end{equation}
		Since $f: V \to U \otimes W$ is a $\mathfrak{K}$-module morphism, we have the following identities
		\begin{equation}\label{EQN_Wmodule_univ_poly1}
			\sum_{s \in S_{ir}} \mu_{ir}^s w_{ps} - \sum_{t=1}^m \sum_{k=1}^m \gamma_{kt}^p x_{ki} \cdot w_{tr},
		\end{equation}
		\begin{equation}\label{EQN_Wmodule_univ_poly2}
			\sum_{s \in T_{ijr}} \eta_{ijr}^s w_{ps} - \sum_{t=1}^m\sum_{k=1}^m \sum_{l=1}^m \delta_{lkt}^p (x_{li}x_{kj}) \cdot w_{tr}, 	
		\end{equation}
		\begin{equation}\label{EQN_Wmodule_univ_poly3}
			\sum_{s \in R_{ijr}} \sigma_{ijr}^s w_{ps} - \sum_{t=1}^m\sum_{k=1}^m \sum_{l=1}^m \varepsilon_{lkt}^p (x_{li}x_{kj}) \cdot w_{tr}, 
		\end{equation}
		for all $p=1,\ldots,m$, $i,j \in I$ and $r \in J$. Here, $\cdot$ denotes the action of $\mathcal{A}$-module action on $W$. Now, the universal property of the free module yields a unique $\mathcal{A}$-module morphism 
		\[\overline{g}: \mathcal{T}(U,V) \to W\] 
		such that $\overline{g}(Y_{sr}) = w_{sr}$ for all $s=1,\ldots, m$ and $r \in J$. Furthermore, $\operatorname{Ker}(\overline{g})$ contains the $\mathcal{A}$-submodule of $\mathcal{T}(U,V)$ generated by the elements given in (\ref{EQN_module_univ_Poly1}-- \ref{EQN_module_univ_Poly3}). Thus, there exists a unique $\mathcal{A}$-module morphism $g: \mathcal{U}(U,V) \to W$ such that $g(y_{sr}) = w_{sr}$ for all $s=1,\ldots,m$ and $r \in J$, i.e., 
		\begin{eqnarray*}
			(id_U \otimes g) \circ \Gamma_{\mathcal{U}(U,V)} (v_r) 
			&=& 
			id_U \otimes g \Big(\sum_{s=1}^m u_s \otimes y_{sr}\Big) 
			~=~
			\sum_{s=1}^m u_s \otimes w_{sr} = f(v_r), \quad \forall r \in J.
		\end{eqnarray*} 
		Therefore, the diagram (\ref{DIAG_univ_Amodule}) is commutative. Verifying that $g$ is the unique $\mathcal{A}$-module morphism with this property is straightforward.
\end{proof}
	
	\begin{theorem}\label{THM_univ_module_functor}
		If $U$ is a finite-dimensional $\mathfrak{L}$-module, then one can construct a functor $\mathcal{U}: \mathfrak{K}^{\mathcal{M}} \to \mathcal{A}^{\mathcal{M}}$ defined by
		\begin{equation*}
			\mathcal{U}(V) := \mathcal{U}(U,V), \quad \mathcal{U}(f) = \bar{f}, 
		\end{equation*} 
		where for any $\mathfrak{K}$-module morphism $f:V \to V'$ the map $\bar{f}: \mathcal{U}(U,V) \to \mathcal{U}(U,V')$ is the unique $\mathcal{A}$-module morphism that makes the following diagram 
		\begin{equation}\label{DIAG_module_functor1}
			\begin{tikzcd}
				V && {U \otimes \mathcal{U}(U,V)} \\
				&& {U \otimes \mathcal{U}(U,V')}
				\arrow["{\Gamma_{\mathcal{U}(U,V)}}", from=1-1, to=1-3]
				\arrow["{\Gamma_{\mathcal{U}(U,V')} \circ f}"', from=1-1, to=2-3]
				\arrow["{id_{U} \otimes \bar{f}}", from=1-3, to=2-3]
			\end{tikzcd} 
		\end{equation} 
		commutative. Furthermore, the functors 
		$U \otimes -:\mathcal{A}^{\mathcal{M}} \to \mathfrak{K}^{\mathcal{M}}$ and $\mathcal{U}: \mathfrak{K}^{\mathcal{M}} \to \mathcal{A}^{\mathcal{M}}$ are adjoint to each other. 
	\end{theorem}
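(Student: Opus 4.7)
The plan is to first define the functor $\mathcal{U}$ on morphisms using the universal property established in Theorem \ref{THM_univ_Amodule_U_and_V}, then verify functoriality via uniqueness, and finally establish the adjunction by showing that the bijection recorded in Remark \ref{REM 4.5} is natural in both variables.

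For a $\mathfrak{K}$-module morphism $f: V \to V'$, I observe that $\Gamma_{\mathcal{U}(U,V')} \circ f: V \to U \otimes \mathcal{U}(U,V')$ is a $\mathfrak{K}$-module morphism. Applying the universal property of $\bigl(\mathcal{U}(U,V), \Gamma_{\mathcal{U}(U,V)}\bigr)$ from Theorem \ref{THM_univ_Amodule_U_and_V} to the pair $\bigl(\mathcal{U}(U,V'),\, \Gamma_{\mathcal{U}(U,V')} \circ f\bigr)$ yields a unique $\mathcal{A}$-module morphism $\bar{f}: \mathcal{U}(U,V) \to \mathcal{U}(U,V')$ making diagram (\ref{DIAG_module_functor1}) commute. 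Setting $\mathcal{U}(f) := \bar{f}$, functoriality follows by a standard uniqueness argument: the identity morphism $id_{\mathcal{U}(U,V)}$ visibly satisfies the defining equation for $\overline{id_V}$, so $\overline{id_V} = id_{\mathcal{U}(U,V)}$; and for $f: V \to V'$ and $g: V' \to V''$, the composite $\bar{g} \circ \bar{f}$ satisfies $(id_U \otimes (\bar{g} \circ \bar{f})) \circ \Gamma_{\mathcal{U}(U,V)} = \Gamma_{\mathcal{U}(U,V'')} \circ (g \circ f)$ by chasing the defining relations of $\bar{f}$ and $\bar{g}$, whence $\overline{g \circ f} = \bar{g} \circ \bar{f}$.

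For the adjunction, I define the candidate natural bijection
\[
\Psi_{V,W}: \operatorname{Hom}_{\mathcal{A}}\bigl(\mathcal{U}(U,V),\, W\bigr) \longrightarrow \operatorname{Hom}_{\mathfrak{K}}\bigl(V,\, U \otimes W\bigr), \qquad \Psi_{V,W}(g) := (id_U \otimes g) \circ \Gamma_{\mathcal{U}(U,V)},
\]
which is bijective precisely by the universal property as recorded in Remark \ref{REM 4.5}. Naturality in $V$ with respect to a $\mathfrak{K}$-module morphism $f: V' \to V$ amounts to the identity $\Psi_{V',W}(g \circ \bar{f}) = \Psi_{V,W}(g) \circ f$, which follows in one line from the commutativity of diagram (\ref{DIAG_module_functor1}) for $\bar{f}$; naturality in $W$ with respect to an $\mathcal{A}$-module morphism $h: W \to W'$ reduces to $(id_U \otimes h) \circ (id_U \otimes g) = id_U \otimes (h \circ g)$, which is immediate. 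Since every step is a formal uniqueness or naturality consequence of Theorem \ref{THM_univ_Amodule_U_and_V}, I do not anticipate a substantive obstacle; the only care needed is to consistently invoke the uniqueness clause of the universal property at the correct stage so as to rule out any competing candidate for $\bar{f}$.
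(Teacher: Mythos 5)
Your proposal is correct and follows essentially the same route as the paper: $\bar{f}$ is defined via the universal property of Theorem \ref{THM_univ_Amodule_U_and_V}, functoriality is obtained from the uniqueness clause for the identity and for composites, and the adjunction comes from the bijection of Remark \ref{REM 4.5} together with naturality in both variables. If anything, you are slightly more thorough than the paper, which merely asserts the commutativity of the naturality diagram, whereas you spell out the two one-line computations $\Psi_{V',W}(g \circ \bar{f}) = \Psi_{V,W}(g) \circ f$ and $(id_U \otimes h) \circ \Psi_{V,W}(g) = \Psi_{V,W'}(h \circ g)$ that justify it.
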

	
	\begin{proof}
		If $f = id_V$, then $id_{\mathcal{U}(U,V)}$ is the unique $\mathcal{A}$-module morphism for which the diagram (\ref{DIAG_module_functor1}) commutes and therefore $\mathcal{U}(id_V) = id_{\mathcal{U}(U,V)}$. If $f:V \to V'$ and $g:V' \to V''$ are two morphisms in $\mathfrak{K}^{\mathcal{M}}$, then $\bar{g} \circ \bar{f} : \mathcal{U}(U,V) \to \mathcal{U}(U,V'')$ is the unique $\mathcal{A}$-module morphism such that the diagram
		\begin{equation*}
			\begin{tikzcd}
				V && {U \otimes \mathcal{U}(U,V)} \\
				&& {U \otimes \mathcal{U}(U,V'')}
				\arrow["{\Gamma_{\mathcal{U}(U,V)}}", from=1-1, to=1-3]
				\arrow["{id_{U} \otimes (\bar{g} \circ \bar{f})}", from=1-3, to=2-3]
				\arrow["{\Gamma_{\mathcal{U}(U,V'')} \circ g \circ f}"', from=1-1, to=2-3]
			\end{tikzcd}
		\end{equation*}
		commutes, implying $\mathcal{U}_U(g \circ f) = \mathcal{U}_U(g) \circ \mathcal{U}_U(f)$.  
		
From the Remark \ref{REM 4.5}, it follows that the following diagram 
		\begin{equation*}
			\begin{tikzcd}
				{\operatorname{Hom}_{\mathcal{A}^{\mathcal{M}}}(\mathcal{U}(V),W)} & {\operatorname{Hom}_{\mathcal{A}^{\mathcal{M}}}(\mathcal{U}(V'),W)} & {\operatorname{Hom}_{\mathcal{A}^{\mathcal{M}}}(\mathcal{U}(V'),W')} \\
				{\operatorname{Hom}_{\mathfrak{K}^{\mathcal{M}}}(V,U \otimes W)} & {\operatorname{Hom}_{\mathfrak{K}^{\mathcal{M}}}(V',U \otimes W)} & {\operatorname{Hom}_{\mathfrak{K}^{\mathcal{M}}}(V',U \otimes W')}
				\arrow[from=1-1, to=1-2]
				\arrow[from=1-2, to=1-3]
				\arrow[from=1-3, to=2-3]
				\arrow[from=1-2, to=2-2]
				\arrow[from=1-1, to=2-1]
				\arrow[from=2-1, to=2-2]
				\arrow[from=2-2, to=2-3]
			\end{tikzcd}
		\end{equation*}
		commutative, giving naturality in both variables. Therefore, $U \otimes -$ and $\mathcal{U}$ are adjoint functors.  
	\end{proof}

\medskip
\section{\large Applications} \label{sec-5}
	In this section, we give some applications of our results. In particular, Theorem \ref{THM_application_1} describes the automorphism group of a finite-dimensional Lie-Yamaguti algebra. For an abelian group $G$, Theorem \ref{THM_application_2} classifies all $G$-gradings on a finite-dimensional Lie-Yamaguti algebra. 

\medskip	
\subsection*{A characterization of the automorphism group of a finite-dimensional Lie-Yamaguti algebra}
	We recall some basic facts about bialgebras and Hopf algebras, which we use in this section.
	\begin{itemize}
		\item For any bialgebra $B$, we denote the set of all group-like elements by
		\begin{equation*}
			G(B) := \{g \in B : \Delta(g) = g \otimes g ~\text{and}~ \varepsilon(g) = 1\}.
		\end{equation*}
		$G(B)$ is a monoid with the multiplication of $B$.
		\item Let $B^\circ$ be the finite dual bialgebra of $B$, i.e., 
		\[B^\circ = \{f \in B^{*} = \operatorname{Hom}(B,\mathbb{K}): f(I) = 0, \text{for some ideal}~ I ~\mbox{of}~ B ~\text{with}~ dim_{\mathbb{K}}(B/I) < \infty\}.\]  
		Here, $\operatorname{Hom}(B, \mathbb{K})$ denotes the vector space of all linear maps from $B$ to $\mathbb{K}$. One can easily show that 
		\[G(B^\circ) = \operatorname{Hom}_{\operatorname{Alg}_{\mathbb{K}}}(B,\mathbb{K}),\]
		where $\operatorname{Hom}_{\operatorname{Alg}_{\mathbb{K}}}(B,\mathbb{K})$
		is the set of all algebra morphism $B \to \mathbb{K}$. 
	\end{itemize}  
	\begin{theorem} \label{THM_application_1}
		Let $\mathfrak{L}$ be a finite-dimensional Lie-Yamaguti algebra with basis $\{e_1,e_2,\ldots,e_n\}$ and consider $G(\mathcal{A}(\mathfrak{L})^\circ)^{inv}$ to be the group of all invertible group-like elements of the finite dual $\mathcal{A}(\mathfrak{L})^\circ$. Then the map $\zeta$ defined by 
		\begin{equation}\label{EQN_app_automorphism}
			 {\zeta}: G(\mathcal{A}(\mathfrak{L})^\circ)^{inv} \to \operatorname{Aut_{LYA}}(\mathfrak{L}), \quad  {\zeta}(\theta)(e_i) := \sum_{s=1}^{n} \theta(x_{si})e_s,
		\end{equation} 
		for any $\theta \in G(\mathcal{A}(\mathfrak{L})^\circ)^{inv}$ and $i=1,2,\ldots,n$, is an isomorphism of groups.
	\end{theorem}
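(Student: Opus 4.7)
The plan is to derive $\zeta$ as a specialisation of the adjunction bijection from Corollary~\ref{COR_LYA_hom_set_bijection} at $A = \mathbb{K}$, and then upgrade the set bijection to a group isomorphism by matching the convolution product on the finite dual with composition of Lie--Yamaguti endomorphisms.

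First, I would apply Corollary~\ref{COR_LYA_hom_set_bijection} with $\mathfrak{K} = \mathfrak{L}$ and $A = \mathbb{K}$ to obtain a bijection
\[
\Psi_{\mathfrak{L},\mathbb{K}} : \operatorname{Hom}_{\operatorname{Alg}_{\mathbb{K}}}(\mathcal{A}(\mathfrak{L}),\mathbb{K}) \longrightarrow \operatorname{Hom}_{\operatorname{LYA}_{\mathbb{K}}}(\mathfrak{L}, \mathfrak{L}) = \operatorname{End}_{\operatorname{LYA}_{\mathbb{K}}}(\mathfrak{L}),
\]
identifying the canonical isomorphism $\mathfrak{L}\otimes \mathbb{K}\cong \mathfrak{L}$. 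Recalling from the preliminaries that $G(\mathcal{A}(\mathfrak{L})^\circ)=\operatorname{Hom}_{\operatorname{Alg}_{\mathbb{K}}}(\mathcal{A}(\mathfrak{L}),\mathbb{K})$ and unwinding $\Psi_{\mathfrak{L},\mathbb{K}}(\theta) = (id_{\mathfrak{L}}\otimes \theta)\circ \Phi_{\mathfrak{L}}$ on the basis $\{e_1,\ldots,e_n\}$ using $\Phi_{\mathfrak{L}}(e_i)=\sum_s e_s\otimes x_{si}$, I recover precisely the formula $\zeta(\theta)(e_i)=\sum_{s=1}^n \theta(x_{si})e_s$ given in \eqref{EQN_app_automorphism}. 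Hence $\zeta$ is a bijection at the level of sets between $G(\mathcal{A}(\mathfrak{L})^\circ)$ and $\operatorname{End}_{\operatorname{LYA}_{\mathbb{K}}}(\mathfrak{L})$.

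Next, I would check that $\zeta$ is a monoid morphism. The monoid structure on $G(\mathcal{A}(\mathfrak{L})^\circ)$ is the convolution product inherited from the bialgebra $\mathcal{A}(\mathfrak{L})$, with identity the counit $\varepsilon$. Using $\Delta(x_{ij})=\sum_{s}x_{is}\otimes x_{sj}$ from \eqref{DEF_comult_counit_A(L)}, for any $\theta,\theta'\in G(\mathcal{A}(\mathfrak{L})^\circ)$ I compute
\[
(\theta\star\theta')(x_{ki}) \;=\; \sum_{s=1}^n \theta(x_{ks})\,\theta'(x_{si}),
\]
so that
\[
\zeta(\theta\star\theta')(e_i) \;=\; \sum_{k=1}^n(\theta\star\theta')(x_{ki})e_k \;=\; \sum_{k,s=1}^n \theta(x_{ks})\theta'(x_{si})e_k \;=\; \zeta(\theta)\bigl(\zeta(\theta')(e_i)\bigr).
\]
Thus $\zeta(\theta\star\theta')=\zeta(\theta)\circ\zeta(\theta')$, and $\zeta(\varepsilon)(e_i)=\sum_s \delta_{si}e_s=e_i$, so $\zeta(\varepsilon)=id_{\mathfrak{L}}$. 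Therefore $\zeta$ is a monoid isomorphism between $G(\mathcal{A}(\mathfrak{L})^\circ)$ and $\operatorname{End}_{\operatorname{LYA}_{\mathbb{K}}}(\mathfrak{L})$.

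Finally, passing to invertible elements on both sides gives the desired group isomorphism: an element of $G(\mathcal{A}(\mathfrak{L})^\circ)$ is invertible precisely when its image in $\operatorname{End}_{\operatorname{LYA}_{\mathbb{K}}}(\mathfrak{L})$ is a bijection, i.e.\ lies in $\operatorname{Aut}_{\operatorname{LYA}}(\mathfrak{L})$. No step presents a genuine obstacle; the only bit that requires care is the bookkeeping in verifying the convolution/composition compatibility, which relies crucially on the explicit comultiplication formula for $\mathcal{A}(\mathfrak{L})$ established earlier.
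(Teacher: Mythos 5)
Your proposal is correct and follows essentially the same route as the paper: specialize the bijection of Corollary~\ref{COR_LYA_hom_set_bijection} at $\mathfrak{K}=\mathfrak{L}$, $A=\mathbb{K}$, identify $\operatorname{Hom}_{\operatorname{Alg}_{\mathbb{K}}}(\mathcal{A}(\mathfrak{L}),\mathbb{K})$ with $G(\mathcal{A}(\mathfrak{L})^\circ)$, verify via $\Delta(x_{ij})=\sum_s x_{is}\otimes x_{sj}$ that convolution corresponds to composition and the counit to $id_{\mathfrak{L}}$, and restrict the resulting monoid isomorphism to groups of units. Your explicit computation matches the paper's, including the unit check $\zeta(\varepsilon)=id_{\mathfrak{L}}$, so nothing further is needed.
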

	
	\begin{proof}
		Using Corollary \ref{COR_LYA_hom_set_bijection} with $\mathfrak{K} = \mathfrak{L}$ and $A = \mathbb{K}$, we have a bijection $\Psi$ defined by 
		\[\Psi: \operatorname{Hom}_{\operatorname{Alg}_{\mathbb{K}}}(\mathcal{A}(\mathfrak{L}), \mathbb{K}) \to \operatorname{Hom}_{\operatorname{LYA}_{\mathbb{K}}}(\mathfrak{L},\mathfrak{L}), \quad \Psi(\theta) = (id_{\mathfrak{L}} \otimes \theta) \circ \Phi_{\mathfrak{L}}.\]
Note that $\operatorname{Hom}_{\operatorname{Alg}_{\mathbb{K}}}(\mathcal{A}(\mathfrak{L}), \mathbb{K}) = G(\mathcal{A}(\mathfrak{L})^\circ)$. Now, we show that $\Psi$ is an isomorphism of monoids. 
		Recall that the monoid structure on $\operatorname{Hom}_{\operatorname{LYA}_{\mathbb{K}}}(\mathfrak{L},\mathfrak{L})$ is given by the usual function composition, and on $G(\mathcal{A}(\mathfrak{L})^\circ)$ is given by the convolution product
		\begin{equation}\label{EQN_app_auto_convolution_product}
			\theta_1 \star \theta_2 (x_{sj}) = \sum_{t=1}^n\theta_1(x_{st})\theta_2(x_{tj}),
		\end{equation} 
		for all $\theta_1, \theta_2 \in G(\mathcal{A}(\mathfrak{L})^\circ)$ and $j,s= 1,2,\ldots,n$. For any $\theta_1, \theta_2 \in G(\mathcal{A}(\mathfrak{L})^\circ)$ and $j = 1,2,\ldots,n$, we have
		\begin{eqnarray*}
			\Psi(\theta_1) \circ \Psi(\theta_2)(e_j) 
			&=& \Psi(\theta_1) \left(\sum_{t=1}^n \theta_2(x_{tj})e_t\right) ~=~ \sum_{s,t=1}^{n} \theta_1(x_{st}) \theta_2(x_{tj}) e_s \\
			&=& \sum_{s=1}^{n} \left(\sum_{t=1}^{n} \theta_1(x_{st}) \theta_2(x_{tj})\right) e_s ~=~ \sum_{s=1}^{n}(\theta_1 \star \theta_2) (x_{sj}) e_s 
			~=~ \Psi(\theta_1 \star \theta_2)(e_j).
		\end{eqnarray*}
		Thus, giving us $\Psi(\theta_1 \star \theta_2) = \Psi(\theta_1) \circ \Psi(\theta_2)$. We also need to show that $\Psi$ preserves the unit of $G(\mathcal{A}(\mathfrak{L})^\circ)$. Since the unit $1$ of the monoid $G(\mathcal{A}(\mathfrak{L})^\circ)$ is the counit $\varepsilon_{\mathcal{A}(\mathfrak{L})}$ of the bialgebra $\mathcal{A}(\mathfrak{L})$, we have
		\[\Psi(1)(e_i)~=~ \Psi(\varepsilon_{\mathcal{A}(\mathfrak{L})})(e_i) ~=~ \sum_{s=1}^{n} \varepsilon_{\mathcal{A}(\mathfrak{L})}(x_{si})e_s ~=~ \sum_{s=1}^{n} \delta_{si}(e_s) ~=~ e_i ~=~ id_{\mathfrak{L}}(e_i).\] 
		Thus, $\Psi$ is an isomorphism of monoid. Hence, $\Psi$ induces an isomorphism $\zeta$ between invertible elements of the monoids given by 
		\begin{equation*}
			 {\zeta}: G(\mathcal{A}(\mathfrak{L})^\circ)^{inv} \to \operatorname{Aut_{LYA}}(\mathfrak{L}), \quad  {\zeta}(\theta)(e_i) := \sum_{s=1}^{n} \theta(x_{si})e_s,\quad \mbox{for any }\theta \in G(\mathcal{A}(\mathfrak{L})^\circ)^{inv}\mbox{ and } i=1,2,\ldots, n.
		\end{equation*} 
	\end{proof}

\medskip	
\subsection*{Classification of the abelian group gradings on a finite-dimensional Lie-Yamaguti algebra}
	Let $G$ be an abelian group and $\mathfrak{L}$ be a Lie-Yamaguti algebra. A $G$-grading on $\mathcal{L}$ is a vector space decomposition $\mathfrak{L} = \oplus_{g \in G} \mathfrak{L}_{g}$ such that 
	\[[\mathfrak{L}_a, \mathfrak{L}_b] \subseteq \mathfrak{L}_{ab},~~ \forall~ a,b \in G  \quad \text{and} \quad \{\mathfrak{L}_a, \mathfrak{L}_b, \mathfrak{L}_c\} \subseteq \mathfrak{L}_{abc},~~ \forall~ a,b,c \in G. \]
	
	\medskip
	\noindent
	Let $\mathbb{K}[G]$ denote the group algebra of $G$. In the following lemma, we discuss the relationship between $G$-gradings on $\mathfrak{L}$ and bialgebra maps $\theta: \mathcal{A}(\mathfrak{L}) \to \mathbb{K}[G]$.
	
	\begin{remark}
For a bialgebra map $\theta: \mathcal{A}(\mathfrak{L}) \to \mathbb{K}[G]$, we can associate a $G$-grading on $\mathfrak{L}$ as follows:  
		\[\mathfrak{L} = \oplus_{g \in G} \mathfrak{L}_g^{(\theta)},\quad \mathfrak{L}_{g}^{(\theta)} ~:=~ \{x \in \mathfrak{L}: (id_{\mathfrak{L}} \otimes \theta) \circ \Phi_{\mathfrak{L}}(x) ~=~ x \otimes g\}, \quad \forall~ g \in G.\]
	\end{remark} 
	
	\begin{lemma}\label{LEM_app_Ggrading} 
		Let $G$ be an abelian group and $\mathfrak{L}$ be a finite-dimensional Lie-Yamaguti algebra. Then there exists a bijection between the set of all $G$-gradings on $\mathfrak{L}$, and the set of all bialgebra maps $\theta:\mathcal{A}(\mathfrak{L}) \to \mathbb{K}[G]$. 
	\end{lemma}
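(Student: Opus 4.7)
The plan is to identify $G$-gradings on $\mathfrak{L}$ with Lie-Yamaguti algebra morphisms $\psi:\mathfrak{L}\to\mathfrak{L}\otimes\mathbb{K}[G]$ that make $\mathfrak{L}$ into a right $\mathbb{K}[G]$-comodule, and then invoke the universal property of $\mathcal{A}(\mathfrak{L})$ established in Theorem \ref{THM_bialgebra_A(L)_univ_prop}. Since $G$ is abelian, $\mathbb{K}[G]$ is a commutative bialgebra, making it a legitimate test object for that universal property.

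First, I would recall the classical correspondence between $G$-gradings on a vector space $V$ and right $\mathbb{K}[G]$-comodule structures on $V$: given a comodule map $\psi:V\to V\otimes\mathbb{K}[G]$, one sets $V_g:=\{v\in V:\psi(v)=v\otimes g\}$; since every $g\in G$ is a group-like element of $\mathbb{K}[G]$ and $\{g\}_{g\in G}$ is a basis, the coassociativity and counit axioms force $V=\bigoplus_{g\in G}V_g$. Conversely, given a $G$-grading, the rule $v\in V_g\mapsto v\otimes g$ extends linearly to a right $\mathbb{K}[G]$-comodule map. This step is routine but must be stated carefully because it underpins the whole argument.

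Next, I would upgrade this correspondence to the Lie-Yamaguti setting. For homogeneous elements $x\in\mathfrak{L}_a$, $y\in\mathfrak{L}_b$, $z\in\mathfrak{L}_c$, the identities
\begin{equation*}
\psi([x,y])=[x,y]\otimes ab=[\psi(x),\psi(y)]_{\mathfrak{L}\otimes\mathbb{K}[G]},\qquad \psi(\{x,y,z\})=\{x,y,z\}\otimes abc=\{\psi(x),\psi(y),\psi(z)\}_{\mathfrak{L}\otimes\mathbb{K}[G]}
\end{equation*}
hold precisely when $[\mathfrak{L}_a,\mathfrak{L}_b]\subseteq\mathfrak{L}_{ab}$ and $\{\mathfrak{L}_a,\mathfrak{L}_b,\mathfrak{L}_c\}\subseteq\mathfrak{L}_{abc}$. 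Hence the bracket-compatibility of the grading is equivalent to $\psi$ being a Lie-Yamaguti algebra morphism. Thus $G$-gradings on $\mathfrak{L}$ biject with those $\psi\in\operatorname{Hom}_{\operatorname{LYA}_{\mathbb{K}}}(\mathfrak{L},\mathfrak{L}\otimes\mathbb{K}[G])$ that also endow $\mathfrak{L}$ with a right $\mathbb{K}[G]$-comodule structure.

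Finally, I would apply Theorem \ref{THM_bialgebra_A(L)_univ_prop} with $B=\mathbb{K}[G]$: every such $\psi$ arises uniquely as $\psi=(id_{\mathfrak{L}}\otimes\theta)\circ\Phi_{\mathfrak{L}}$ for a unique bialgebra morphism $\theta:\mathcal{A}(\mathfrak{L})\to\mathbb{K}[G]$. Tracing the correspondences, the grading recovered from $\theta$ is exactly $\mathfrak{L}_g^{(\theta)}=\{x\in\mathfrak{L}:(id_{\mathfrak{L}}\otimes\theta)\circ\Phi_{\mathfrak{L}}(x)=x\otimes g\}$, matching the formula stated in the remark preceding the lemma. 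The main obstacle, while not deep, lies in the first step: one must argue carefully that coassociativity, together with the fact that $\{g\otimes g\}_{g\in G}$ is the image of a basis under $\Delta_{\mathbb{K}[G]}$, really does split $V$ into the sum of eigenspaces $V_g$; once this is secured, the remainder of the proof is a direct application of the already-established universal property.
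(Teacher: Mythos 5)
Your proposal is correct and follows essentially the same route as the paper's proof: both identify $G$-gradings with right $\mathbb{K}[G]$-comodule structures $\psi:\mathfrak{L}\to\mathfrak{L}\otimes\mathbb{K}[G]$ (via $\mathfrak{L}_g=\{x:\psi(x)=x\otimes g\}$), check on homogeneous elements that $\psi$ is a Lie-Yamaguti morphism precisely when the brackets respect the grading, and then apply Theorem \ref{THM_bialgebra_A(L)_univ_prop} with $B=\mathbb{K}[G]$. The only difference is cosmetic: you spell out the coassociativity/counit argument for the eigenspace decomposition, which the paper simply cites as a classical fact.
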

	
	\begin{proof}
		By applying Theorem \ref{THM_bialgebra_A(L)_univ_prop} for the commutative bialgebra $B = \mathbb{K}[G]$, we get a bijection between the set of all bialgebra morphisms $\mathcal{A}(\mathfrak{L}) \to \mathbb{K}[G]$ and the set of all Lie-Yamaguti algebra morphism $f: \mathfrak{L} \to \mathfrak{L} \otimes \mathbb{K}[G]$ that make $\mathfrak{L}$ a right $\mathbb{K}[G]$-comodule. 
		There exists a bijection between the set of all right $\mathbb{K}[G]$-comodule structures $f: \mathfrak{L} \to \mathfrak{L} \otimes \mathbb{K}[G]$ on the vector space $\mathfrak{L}$ and the set of all vector space decompositions $\mathfrak{L} 
		= \oplus_{a \in G} \mathfrak{L}_{a}$. In particular, the bijection is given by
		$$x_{a} \in \mathfrak{L}_{a} \quad \mbox{if and only if} \quad f(x_{a}) = x_{a} \otimes a, \forall~ a \in G.$$
		Therefore, it only remains to show that under this bijection, a right coaction $f: \mathfrak{L} \to \mathfrak{L} \otimes \mathbb{K}[G]$ is a Lie-Yamaguti algebra morphism if and only if $[\mathfrak{L}_a, \mathfrak{L}_{b}] \subseteq \mathfrak{L}_{a b}$ and $\{\mathfrak{L}_a, \mathfrak{L}_{b}, \mathfrak{L}_{c}\} \subseteq \mathfrak{L}_{a b c}$ for all $a, b, c \in G$. We show this below. 
		
		\medskip\noindent
		Let $a, b \in G$ and $x_a \in \mathfrak{L}_{a},~ x_{b}\in \mathfrak{L}_{b}$: then $[f(x_{a}),f(x_{b})] = [x_a \otimes a, x_{b} \otimes b] = [x_{a},x_{b}] \otimes a b$. Thus, we obtain $f([x_a, x_b]) = [f(x_a), f(x_{b})]$ if and only if $[x_a,x_b] \in \mathfrak{L}_{ab}$. Similarly, we have  $f\{x_a,x_b, x_c\} = \{f(x_a), f(x_b), f(x_c)\}$ if and only if $\{x_a, x_b, x_c\} \in \mathfrak{L}_{abc}$. Hence, $f: \mathfrak{L} \to \mathfrak{L} \otimes \mathbb{K}[G]$ is a Lie-Yamaguti algebra morphism if and only if $[\mathfrak{L}_a, \mathfrak{L}_{b}] \subseteq \mathfrak{L}_{ab}$ and $\{\mathfrak{L}_{a}, \mathfrak{L}_{b}, \mathfrak{L}_{c}\} \subseteq \mathfrak{L}_{abc}$ for all $a, b, c \in G$. Hence, completing the proof.   
	\end{proof}
	
	\medskip
	We now classify all $G$-gradings on a given Lie-Yamaguti algebra $\mathfrak{L}$, where $G$ is an abelian group. Two $G$-gradings $\mathfrak{L} = \oplus_{a \in G} \mathfrak{L}_{a}$ and $\mathfrak{L} = \oplus_{a \in G} \mathfrak{L'}_{a}$ on $\mathfrak{L}$ are said to be {\bf isomorphic} if there exists an automorphism  $\omega \in \operatorname{Aut}_{\operatorname{LYA}}(\mathfrak{L})$ of $\mathfrak{L}$ such that $\omega(\mathfrak{L}_{a}) \subseteq \mathfrak{L}'_a$ for all $a \in G$. Since $\omega$ is bijective and $\mathfrak{L}$ is $G$-graded, we can show that the last condition is equivalent to $\omega(\mathfrak{L}_{a}) = \mathfrak{L}'_{a}$ for all $a \in G$. 
	
	\begin{definition}
		Let $G$ be an abelian group and $\mathfrak{L}$ be a finite-dimensional Lie-Yamaguti algebra. Two bialgebra morphisms $\theta_1,\theta_2 : \mathcal{A}(\mathfrak{L}) \to \mathbb{K}[G]$ are called {\bf conjugate} if there exists $g \in G(\mathcal{A}(\mathfrak{L})^\circ)^{inv}$ an invertible group like element of the finite dual $\mathcal{A}(\mathfrak{L})^{\circ}$ such that $\theta_2 = g\star\theta_1\star g^{-1}$, in the convolution algebra $\operatorname{Hom}(\mathcal{A}(\mathfrak{L}), \mathbb{K}[G])$. 
	\end{definition}
	
	Let $\operatorname{Hom}_{\operatorname{BiAlg}_{\mathbb{K}}}(\mathcal{A}(L), \mathbb{K}[G])/\thickapprox$ denote the quotient of the set of all bialgebra morphisms $\mathcal{A}(\mathfrak{L}) \to \mathbb{K}[G]$ by the above conjugacy relation and let $\widehat{\theta}$ denote the equivalence class of $\theta \in \operatorname{Hom}_{\operatorname{BiAlg}_{\mathbb{K}}}(\mathcal{A}(\mathfrak{L}), \mathbb{K}[G])$. Then, using a similar argument to the case of Lie and Leibniz algebras \cite[Theorem 3.5]{agore20}, one can conclude the following result.
		
	\begin{theorem} \label{THM_application_2}
		Let $G$ be an abelian group, and $\mathfrak{L}$ be a finite-dimensional Lie-Yamaguti algebra. Let $\mathbf{G-gradings(\mathfrak{L})}$ be the set of isomorphism classes of all $G$-gradings on $\mathfrak{L}$. Then the map
		\begin{equation}\label{EQN_grading_classification_map}
			\operatorname{Hom}_{\operatorname{BiAlg}_{\mathbb{K}}}(\mathcal{A}(\mathfrak{L}), \mathbb{K}[G])/ \thickapprox ~
			\longrightarrow ~~ \mathbf{G-gradings(\mathfrak{L})}, \quad \widehat{\theta} \mapsto \mathfrak{L}^{(\theta)} := \oplus_{g \in G} \mathfrak{L}_{g}^{(\theta)}
		\end{equation}
		where $\mathfrak{L}_{g}^{\theta} = \{x \in \mathfrak{L}: (id_\mathfrak{L} \otimes \theta) \circ \Phi_{\mathfrak{L}} = x \otimes g\}$, for all $g \in G$, is bijective. \qed
	\end{theorem}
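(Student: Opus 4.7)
The plan is to show that the bijection from Lemma~\ref{LEM_app_Ggrading} between bialgebra maps $\mathcal{A}(\mathfrak{L}) \to \mathbb{K}[G]$ and $G$-gradings on $\mathfrak{L}$ descends to a bijection on equivalence classes, namely conjugacy on the former and isomorphism of $G$-gradings on the latter. Surjectivity of \eqref{EQN_grading_classification_map} is immediate from Lemma~\ref{LEM_app_Ggrading}; the substantive task is to prove that two bialgebra maps $\theta_1,\theta_2 : \mathcal{A}(\mathfrak{L}) \to \mathbb{K}[G]$ are conjugate precisely when $\mathfrak{L}^{(\theta_1)}$ and $\mathfrak{L}^{(\theta_2)}$ are isomorphic as $G$-gradings.

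The pivot of the argument is the following compatibility identity: for every $g \in G(\mathcal{A}(\mathfrak{L})^\circ)^{inv}$ with associated automorphism $\omega := \zeta(g) \in \operatorname{Aut}_{\operatorname{LYA}}(\mathfrak{L})$ from Theorem~\ref{THM_application_1}, and every bialgebra map $\theta: \mathcal{A}(\mathfrak{L}) \to \mathbb{K}[G]$, one has
\begin{equation*}
    \bigl(id_{\mathfrak{L}} \otimes (g \star \theta \star g^{-1})\bigr) \circ \Phi_{\mathfrak{L}} \;=\; (\omega \otimes id_{\mathbb{K}[G]}) \circ (id_{\mathfrak{L}} \otimes \theta) \circ \Phi_{\mathfrak{L}} \circ \omega^{-1}.
\end{equation*}
To verify this on a basis vector $e_i$, one expands the left-hand side using the iterated coproduct $(id \otimes \Delta) \circ \Delta(x_{si}) = \sum_{t,p} x_{st} \otimes x_{tp} \otimes x_{pi}$, and the right-hand side using $\omega(e_t) = \sum_s g(x_{st})e_s$ together with $\omega^{-1}(e_i) = \sum_p g^{-1}(x_{pi})e_p$ (the latter since $\zeta$ is a group isomorphism, so $\zeta(g^{-1}) = \omega^{-1}$). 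Both reduce to $\sum_s e_s \otimes \sum_{t,p} g(x_{st})\,\theta(x_{tp})\,g^{-1}(x_{pi})$, where commutativity of $\mathbb{K}[G]$ is used to freely reorder the scalar factors.

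Granted this identity, the forward direction is immediate: if $\theta_2 = g \star \theta_1 \star g^{-1}$ and $x \in \mathfrak{L}^{(\theta_1)}_a$, then applying the identity gives $(id_{\mathfrak{L}} \otimes \theta_2)\Phi_{\mathfrak{L}}(\omega(x)) = (\omega \otimes id)(x \otimes a) = \omega(x) \otimes a$, so $\omega\bigl(\mathfrak{L}^{(\theta_1)}_a\bigr) = \mathfrak{L}^{(\theta_2)}_a$ for every $a \in G$. Conversely, if the gradings are isomorphic via some $\omega \in \operatorname{Aut}_{\operatorname{LYA}}(\mathfrak{L})$, set $g := \zeta^{-1}(\omega)$; the relation $\omega(\mathfrak{L}^{(\theta_1)}_a) = \mathfrak{L}^{(\theta_2)}_a$ translates, via the identity above, into an equality of the values of $\theta_2$ and $g \star \theta_1 \star g^{-1}$ on the generators $\{x_{ij}\}$ of $\mathcal{A}(\mathfrak{L})$, and since both maps are algebra morphisms into the commutative algebra $\mathbb{K}[G]$, they must agree on all of $\mathcal{A}(\mathfrak{L})$. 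The main obstacle is the bookkeeping required to verify the compatibility identity; once it is in hand the remainder is formal, closely mirroring the Lie and Leibniz arguments of \cite[Theorem 3.5]{agore20}.
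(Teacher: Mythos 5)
Your proposal is correct and follows essentially the route the paper itself intends: the paper omits the proof, citing the Lie/Leibniz argument of \cite[Theorem 3.5]{agore20}, and your write-up is a faithful reconstruction of exactly that argument — the conjugation identity $\bigl(id_{\mathfrak{L}} \otimes (g \star \theta \star g^{-1})\bigr) \circ \Phi_{\mathfrak{L}} = (\omega \otimes id) \circ (id_{\mathfrak{L}} \otimes \theta) \circ \Phi_{\mathfrak{L}} \circ \omega^{-1}$ checks out on the generators via the iterated coproduct, and combined with Lemma \ref{LEM_app_Ggrading}, Theorem \ref{THM_application_1}, and the fact that the $x_{si}$ generate $\mathcal{A}(\mathfrak{L})$ (with $g \star \theta_1 \star g^{-1}$ an algebra map since $\mathbb{K}[G]$ is commutative), it yields both well-definedness and injectivity, while surjectivity is immediate from the lemma.
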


\end{document}